\documentclass[11pt]{article}
\usepackage{amsmath, amscd, amssymb, latexsym, epsfig, color, amsthm}
\usepackage[all]{xy}
\usepackage{verbatim}
\numberwithin{equation}{section}

\newtheorem{theorem}{Theorem}[section]
\newtheorem{proposition}[theorem]{Proposition}

\newtheorem{corollary}[theorem]{Corollary}
\newtheorem{conjecture}[theorem]{Conjecture}
\newtheorem{lemma}[theorem]{Lemma}

\theoremstyle{definition}
\newtheorem{definition}[theorem]{Definition}
\newtheorem{example}[theorem]{Example}

\newtheorem{remark}[theorem]{Remark}

\DeclareMathOperator{\skel}{Skel}
\DeclareMathOperator\lk{\mathrm{lk}}

\DeclareMathOperator\st{\mathrm{st}}

\DeclareMathOperator{\supp}{\mathrm{supp}}

\newcommand{\N}{{\mathbb N}}
\newcommand{\R}{{\mathbb R}}
\newcommand{\Q}{{\mathbb Q}}
\newcommand{\Z}{{\mathbb Z}}

\newcommand{\Soc}{\mathrm{soc}}
\newcommand{\Stress}{\mathcal{S}}
\newcommand{\F}{\mathbb{F}}

\title{Lower bounds on the $g$-numbers of spheres without large missing faces}
\author{
	Isabella Novik\thanks{Research of IN is partially\textsl{} supported by NSF grant  DMS-2246399.}\\
	\small Department of Mathematics\\[-0.8ex]
	\small University of Washington\\[-0.8ex]
	\small Seattle, WA 98195-4350, USA\\[-0.8ex]
	\small \texttt{novik@uw.edu}
	\and 
	Hailun Zheng\thanks{Research of HZ is partially\textsl{} supported by NSF grant DMS-2535689.} \\
	\small Department of Mathematics\\[-0.8ex]
	\small University of Hawai`i at M\={a}noa\\[-0.8ex]
	\small 2565 McCarthy Mall, Honolulu, HI 96822, USA \\[-0.8ex]
	\small \texttt{hailunz@hawaii.edu}
}
\begin{document}
\maketitle
\begin{abstract}
	We establish several new lower bounds on the $g$-numbers of simplicial spheres without large missing faces. For this class of spheres, we derive bounds on the $g$-numbers in terms of the independence numbers of their graphs, extending a result of Chudnovsky and Nevo. As a consequence, we show that flag $(d-1)$-spheres---and more generally, flag normal $(d-1)$-pseudomanifolds---satisfy $g_2\geq (1/2-\delta(d))f_0$, where $\delta(d)$ is a function of $d$ with $\delta(d)\to 0$ as $d\to \infty$.  We further prove that, for simplicial $(d-1)$-spheres without large missing faces, an initial segment of the $g$-vector forms a level sequence, yielding additional inequalities among the $g$-numbers. 
Finally, we show that simplicial $4$-spheres without missing faces of dimension greater than two satisfy $g_2\geq \frac{2}{5}f_0 - \frac{6}{5}$.
\end{abstract}

\section{Introduction}
In this paper, we establish new lower bounds on the face numbers of simplicial spheres without large missing faces. Throughout, by a sphere we mean a (simplicial) $\Z/2\Z$-homology sphere. We denote by $f_i(\Delta)$ the number of $i$-faces of a simplicial complex $\Delta$. The celebrated Lower Bound Theorem asserts that if $\Delta$ is a normal pseudomanifold of dimension $d-1\geq 3$, then $f_1(\Delta)-df_0(\Delta)+\binom{d+1}{2}\geq 0$, and moreover, equality holds if and only if $\Delta$ is a stacked sphere \cite{Barnette-LBT-pseudomanifolds, Kalai87, Fogelsanger88, Tay}. This theorem is usually phrased in terms of the $g$-numbers---certain alternating sums of the face numbers. In this language, the theorem states that for any normal pseudomanifold $\Delta$ of dimension at least three, $g_2(\Delta)\geq 0$, with equality if and only if $\Delta$ is a stacked sphere.

In the same spirit, the Generalized Lower Bound Theorem (namely, the linear inequalities of the $g$-theorem) asserts that if $\Delta$ is a $(d-1)$-sphere, then for all $1\leq i\leq d/2$, we have $g_i(\Delta)\geq 0$ \cite{Stanley80, Adiprasito-g-conjecture, AdiprasitoPapadakisPetrotou, PapadakisPetrotou, KaruXiao}. Furthermore, equality $g_i(\Delta)=0$ for some $1\leq i\leq d/2$ holds if and only if $\Delta$ is $(i-1)$-stacked; see \cite{McMullenWalkup71, MuraiNevo2013}. The notion of $(i-1)$-stackedness generalizes that of stackedness; instead of giving precise definitions, we merely note that a stacked sphere always has missing faces of dimension $\geq d-1$, while an $(i-1)$-stacked sphere has missing faces of dimension $\geq d-i+1$.

This makes it plausible to conjecture that spheres without large missing faces should satisfy significantly stronger lower-bound--type results. Of particular interest are flag spheres---these are spheres without missing faces of dimension larger than one. A motivation for their study comes from their surprising metric properties first noticed by Gromov; see \cite{Gromov87}. Building on Gromov's results, and motivated by the Hopf conjecture in Riemannian geometry, Charney and Davis \cite{CharneyDavis95} posited the following purely combinatorial conjecture:

\begin{conjecture}[Charney--Davis Conjecture]
	Let $k\geq 2$. Then all flag $(2k-1)$-spheres satisfy $g_k-g_{k-1}+\dots +(-1)^kg_0\geq 0$.
\end{conjecture}

At present, this conjecture is known to hold only in the case $k=2$ \cite{DavisOkun}, and it remains wide open in all higher-dimensional cases. 
Inspired by the Charney--Davis Conjecture and the Generalized Lower Bound Conjecture, Gal \cite{Gal05} defined the $\gamma$-numbers $\gamma_0, \gamma_1, \dots, \gamma_{\lfloor d/2\rfloor}$ of a $(d-1)$-sphere as certain weighted alternating sums of the $h$-numbers. Consequently, the $\gamma$-numbers can be expressed as linear combinations of the $g$-numbers. For instance, when $d=2k$, one has $\gamma_k=g_k-g_{k-1}+\dots +(-1)^kg_0$.  Gal conjectured that the $\gamma$-numbers of {\em flag} spheres are all nonnegative. Very little progress has been made on Gal's conjecture. In a very recent paper  of Chudnovsky and Nevo \cite{Chudnovsky-Nevo}, it was shown that for flag $(d-1)$-spheres, $g_2\geq \frac{1-\delta(d)}{2d+1}f_0$, where $\delta(d)\to 0$ as $d\to \infty$.

We denote by $S(j,d-1)$ the collection of ($\mathbb{Z}/2\mathbb{Z}$-homology) $(d-1)$-spheres without missing faces of dimension larger than $j$. 
In particular, $S(d,d-1)$ is the class of all $(d-1)$-spheres, while $S(1,d-1)$ is the class of all flag $(d-1)$-spheres.
For the more general cases of spheres in $S(j, d-1)$ with $1\leq j\leq d$, Nevo proposed a conjecture that interpolates between Gal's Conjecture for $S(1, d-1)$ and the generalized Lower Bound Theorem for $S(d, d-1)$; see \cite[Conjecture 1.5]{Nevo2009}. For instance, for spheres in $S(2,4)$, his conjecture states that $g_2\geq g_1=f_0-6$.

While we are still very far from proving Gal’s or Nevo’s conjectures, in this paper we use the affine stress spaces and the Stanley--Reisner rings to establish several new lower bounds on the $g$-numbers of spheres in $S(j, d-1)$. Our results can be summarized as follows:
\begin{enumerate}
	\item We prove that any sphere in $S(2, 4)$ satisfies $g_2\geq \frac{2}{5}f_0 - \frac{6}{5}$; see Theorem \ref{thm: S(2,4)}. This improves the bound given in \cite[Lemma 4.2]{Nevo2009}.
	\item We establish a lower bound on the $g_2$-numbers of normal pseudomanifolds without large missing faces, and, more generally, lower bounds on the $g$-numbers of spheres without large missing faces in terms of the independence numbers of their graphs; see Theorems \ref{thm: flag lower bound II}, \ref{thm: lower bound on g II}, and \ref{thm: g_{k+1} in terms of f_{k-1}}. In particular, this implies that flag normal $(d-1)$-pseudomanifolds satisfy $g_2\geq (1/2-\delta(d))f_0$, where $\delta(d)$ is a function of $d$ with $\delta(d)\to 0$ as $d\to \infty$. 
	\item Using results on level rings, we obtain several additional inequalities on the $g$-numbers; see Corollary \ref{thm: level ring}. As a consequence, we produce counterexamples to the conjectures on affine stress spaces raised in \cite{NZ-Aff-Reconstr}; see Corollaries \ref{counterexample1} and \ref{counterexample2}.
\end{enumerate}

The structure of the paper is as follows. Section 2 reviews background on face enumeration for spheres and the relevant Stanley--Reisner ring theory. Section 3 discusses affine stress theory together with the cone lemma and some of its applications. Section 4 focuses on spheres in $S(2,4)$, where we establish the lower bound in Theorem~\ref{thm: S(2,4)}. In Section 5, we investigate the relationship between the $g$-numbers of spheres and the independence numbers of the associated graphs. Finally, Section 6 derives additional relations among the $g$-numbers using algebraic tools and presents counterexamples to two conjectures.

	\section{Preliminaries}
	\subsection{Simplicial complexes and spheres}
	An (abstract) {\em simplicial complex} $\Delta$ with vertex set $V=V(\Delta)$ is a nonempty collection of subsets of $V$ that is closed under inclusion and contains all singletons; that is, $\{v\}\in\Delta$ for all $v\in V$. Given a $(d+1)$-set $V$, the collection of all subsets of $V$ is a {\em $d$-simplex}, which we usually denote by $\overline{V}$, or $\sigma^d$ when we do not need to emphasize the vertex set. Similarly, the collection of all subsets of $V$ except $V$ itself is the {\em boundary complex} of a $d$-simplex, denoted by $\partial\overline{V}$ or $\partial \sigma^d$.
	
	The elements of a simplicial complex $\Delta$ are called {\em faces} of $\Delta$. A face $\tau$ of $\Delta$ has {\em dimension} $i$ if $|\tau|=i+1$; in this case we say that $\tau$ is an {\em $i$-face}. We usually refer to $0$-faces as {\em vertices}, $1$-faces as {\em edges}, and the maximal under inclusion faces as {\em facets}. For brevity, we sometimes denote an $i$-face $\{a_1, a_2, \dots, a_{i+1}\}$ by $a_1a_2\dots a_{i+1}$. The {\em dimension of $\Delta$} is $\max\{\dim \tau: \tau\in \Delta\}$. A set $\tau\subseteq V$ is a {\em missing face} of $\Delta$ if $\tau$ is not a face of $\Delta$, but every proper subset of $\tau$ is a face of $\Delta$; it is a {\em missing $i$-face} if $|\tau|=i+1$.
	
	Let $\tau$ be a face of $\Delta$. The {\em star} and {\em link} of $\tau$ are defined as
	$$\st(\tau)=\st(\tau,\Delta)=\{\sigma \in \Delta \ : \  \sigma\cup \tau\in\Delta\} \;\;\text{ and }\;\;\lk(\tau)=\lk(\tau,\Delta)= \{\sigma\in \st(\tau) \ : \ \sigma\cap \tau=\emptyset\}.$$ When $\tau=v$ is a vertex, we also define $\Delta\backslash v=\{\sigma\in\Delta : v\notin\sigma\}$; this subcomplex of $\Delta$ is called the {\em antistar} of $v$.
	
	A subcomplex of $\Delta$ is called {\em induced} if it is of the form $\Delta[W]=\{\tau\in \Delta: \tau\subseteq W\}$ for some $W\subseteq V(\Delta)$. The subcomplex of $\Delta$ consisting of all faces of $\Delta$ of dimension $\leq k$ is called the {\em $k$-skeleton} of $\Delta$ and is denoted $\skel_k(\Delta)$; the $1$-skeleton of $\Delta$ is also known as the {\em graph} of $\Delta$. Finally, if $\Delta$ and $\Gamma$ are two simplicial complexes on disjoint vertex sets, then their {\em join} is 
	$$\Delta*\Gamma=\{\sigma\cup \tau: \sigma\in \Delta, \tau \in \Gamma\}.$$ If $\Gamma$ is a $0$-simplex, that is, $\Gamma=\{v,\emptyset\}$, we write $\Delta*\Gamma=\Delta*v$  and call this complex the {\em cone} over $\Delta$ with apex $v$. If $\Gamma$ is $\partial \sigma^1$ (that is, the complex consisting of two disjoint vertices), we write $\Delta*\Gamma=\Sigma \Delta$ and call $\Sigma\Delta$ the {\em suspension} of $\Delta$. 
	
	Let $\F$ be a field. A pure $(d-1)$-dimensional simplicial complex is an {\em $\F$-homology sphere} if, for every face $\sigma$ (including the empty face), the link $\lk(\sigma)$ has the homology of a $(d-1-|\sigma|)$-dimensional sphere (over $\F$). 
	Denote by $\|\Delta\|$ the geometric realization of $\Delta$. We say that $\Delta$ is a {\em simplicial $(d-1)$-sphere} if $\|\Delta\|$ is
	homeomorphic to a $(d-1)$-dimensional sphere. It is known that a $\Z/2\Z$-homology sphere is always an $\R$-homology sphere; see \cite[Lemma 2.1]{KaruXiao}. Moreover, a simplicial sphere is an $\F$-homology sphere for any field $\F$. The boundary complex of a simplicial polytope $P$, $\partial P$, is a simplicial $(d-1)$-sphere; such spheres are called {\em polytopal}. 
	
	In this paper, we work with the class of $\Z/2\Z$-homology spheres, which we refer to simply as spheres. We define $S(i, d-1)$ to be the set of $\Z/2\Z$-homology $(d-1)$-spheres whose missing faces all have dimension at most $i$. For example, if we set $K(i, d-1):=\partial \sigma^i* \partial \sigma^i *\partial \sigma^{d-2i}$ for $d/3\leq i\leq d/2$, then $K(i, d-1)\in S(i, d-1)$. Moreover, the class of flag $(d-1)$-spheres is precisely $S(1, d-1)$. We also note that the link of a $(j-1)$-face of a sphere in $S(i, d-1)$ is an element of $S(i, d-j-1)$.

	Let $\Delta$ be a $(d-1)$-sphere. Denote by $f_i=f_i(\Delta)$ the number of $i$-faces of $\Delta$. The {\em $f$-vector} of $\Delta$ is $(f_{-1},f_0, \dots, f_{d-1})$, and the {\em $h$-vector} $h(\Delta)=(h_0, h_1,\ldots,h_d)$ is defined by the identity
	$$\sum_{i=0}^df_{i-1}(t-1)^{d-i}=\sum_{j=0}^d h_it^{d-i}.$$ 
	The {\em $h$-polynomial }of $\Delta$ is $h(\Delta, t)=\sum_{i=0}^d h_it^i$. For example, the $h$-vector of the boundary complex of a $d$-simplex is $(1,1, \dots, 1)$, and consequently, $h(\partial \sigma^d, t)=1+t+\dots +t^d$.		
	The {\em $g$-vector} of $\Delta$, $g(\Delta)=(g_0,g_1,\ldots, g_{\lceil d/2\rceil})$, is defined by letting $g_0=1$ and $g_j=h_j-h_{j-1}$ for $1\leq j\leq \lceil d/2\rceil$. The Dehn--Sommerville relations \cite{Klee64} assert that the $h$-vector of a $(d-1)$-sphere is symmetric: $h_i=h_{d-i}$ for all $0\leq i\leq d$. In particular, it follows that if $d$ is odd, then $g_{\lceil d/2\rceil}=0$.
	
	We will also need the notion of a normal pseudomanifold, a class of simplicial complexes that significantly generalizes spheres. A simplicial complex $\Delta$ is a {\em normal $(d-1)$-pseudomanifold} if all facets have dimension $d-1$, every $(d-2)$-face is contained in exactly two facets, and the link of each face of dimension at most $d-3$ is connected. The $f$-numbers and $h$-numbers of a normal $(d-1)$-pseudomanifold are defined in the same way as for spheres; in particular, we will also be interested in $g_2=h_2-h_1=f_1-df_0+\binom{d+1}{2}$.
	
	\subsection{Graphs and the independence number}
	Let $G=(V, E)$ be a graph. If the vertex set is $V=\{v_1, v_2, \dots, v_\ell\}$ and the edge set is $E=\{v_iv_{i+1}: 1\leq i\leq \ell-1\}$, then the graph is a {\em path}, which we usually denote by $(v_1, v_2,\dots, v_\ell)$. If $v_\ell=v_1$ (and all other vertices are distinct), we say that the graph is a {\em cycle}.
	
	The {\em independence number} $\alpha(G)$ of $G$ is the maximum cardinality of an independent set in $G$. The independence number has been extensively studied in graph theory. One of the most important classical results in this area is the following; see, for instance, \cite[Chapter 41]{EignerZiegler}.
	\begin{theorem}[Tur\'an's theorem]
		Let $G$ be a graph with $f_0$ vertices and $f_1$ edges. Then $$\alpha(G)\geq \frac{f_0}{2f_1/f_0+1}.$$
	\end{theorem}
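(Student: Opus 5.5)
We prove the bound through the Caro--Wei inequality and then convexity. The Caro--Wei inequality says that every graph $G$ on vertex set $V$ satisfies
$$\alpha(G)\geq \sum_{v\in V}\frac{1}{\deg(v)+1}.$$
We then apply Jensen's inequality to the convex function $x\mapsto 1/(x+1)$ on $[0,\infty)$. Since $\sum_v \deg(v)=2f_1$, the average degree is $2f_1/f_0$. Jensen gives
$$\sum_{v}\frac{1}{\deg(v)+1}\;\geq\; f_0\cdot\frac{1}{2f_1/f_0+1},$$
which is exactly the claimed bound. If $f_0=0$ the statement is vacuous, so we may assume $f_0\geq 1$.

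To prove Caro--Wei, we use the probabilistic argument.
\begin{enumerate}
\item Choose a uniformly random linear ordering $\pi$ of $V$.
\item Let $I_\pi$ be the set of vertices $v$ that precede all of their neighbors in $\pi$.
\item $I_\pi$ is independent. If two adjacent vertices $u,v$ both lay in $I_\pi$, each would have to come before the other, which is impossible.
\item A vertex $v$ lies in $I_\pi$ exactly when it is first among the $\deg(v)+1$ vertices of its closed neighborhood. By symmetry this happens with probability $1/(\deg(v)+1)$.
\item By linearity of expectation, $\mathbb{E}|I_\pi|=\sum_v 1/(\deg(v)+1)$. Hence some ordering produces an independent set at least this large.
\end{enumerate}

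A deterministic alternative is the greedy algorithm. Repeatedly pick a vertex of minimum degree, add it to the independent set, and delete it together with its neighbors. One then checks that the quantity $\sum 1/(\deg+1)$ does not increase beyond the number of steps taken.

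No step is a real obstacle. The only point needing care is the symmetry claim in the probability computation: $v$ is first among its closed neighborhood with probability $1/(\deg(v)+1)$ because the restriction of a uniform random ordering to any fixed subset is itself uniform.
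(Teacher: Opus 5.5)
Your proof is correct. The random-ordering bound $\alpha(G)\geq\sum_v 1/(\deg(v)+1)$ followed by convexity is the standard argument. The paper gives no proof of its own and only cites Aigner--Ziegler, where this probabilistic proof appears in the complementary clique form, with Cauchy--Schwarz used instead of your equivalent Jensen step.
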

	When $\Delta$ is a simplicial complex, we write $\alpha(\Delta)$ for the independence number of its graph.
	
	\subsection{The Stanley--Reisner ring theory}
	
	For a $(d-1)$-dimensional simplicial complex $\Delta$ with vertex set $V=V(\Delta)$, let $\R[X]=\R[x_v : v\in V]$ be the polynomial ring over $\R$ with one variable for each vertex of $\Delta$.  The {\em Stanley--Reisner ideal} of $\Delta$ is the ideal of $\R[X]$ generated by the monomials corresponding to the missing faces of $\Delta$:
	\[I_\Delta=(x_{j_1}x_{j_2}\cdots x_{j_k} : \{j_1,\ldots,j_k\} \mbox{ is a missing face of }\Delta).\] The {\em Stanley--Reisner ring} (or {\em face ring}) of $\Delta$ is the quotient $\R[\Delta]:=\R[X]/I_\Delta$. This is a graded ring, and we write  $\R[\Delta]_i$ for its $i$th graded component. (We will occasionally consider a subfield $\F$ of $\R$ and write $\F[\Delta]:=\F[X]/I_\Delta$ for the Stanley--Reisner ring of $\Delta$ over $\F$. The discussion below applies in this setting as well.)
	
	Assume that $\Delta$ is a $\Z/2\Z$-homology $(d-1)$-sphere. An embedding of $\Delta$ in $\R^d$ is a function $p: V(\Delta)\to\R^d$. In this paper we usually work with a {\em generic embedding}. That is, choose $\{a_{v,j}: v\in V, j\in[d]\}\subset\R$, where $[d]: = \{1,2, \dots, d\}$, to be a set that is {\em algebraically independent} over $\Q$ and define $p(v)=(a_{v,1}, \dots, a_{v, d})$ for each vertex $v\in V(\Delta)$. Occasionally, when $\Delta$ is realizable as the boundary complex of a simplicial $d$-polytope $P\subset \R^d$, we also consider a {\em natural embedding} of $\Delta$. In this case, we take  $p$ to be the map assigning to each vertex $v\in \Delta$ the coordinates of the corresponding vertex of $P$. 
	
	Given an embedding $p$ of $\Delta$, define $\theta_j:=\sum_{v\in V} p(v)_j x_v$ for $1\leq j\leq d$. When $p$ is a generic or natural embedding of $\Delta$, the quotient ring $\R[\Delta]/(\theta_1, \dots, \theta_d)$ is a finite-dimensional $\R$-vector space. In this case, the sequence $\Theta=\Theta(p)=(\theta_1, \dots, \theta_d)$ is called a {\em linear system of parameters} (l.s.o.p.) and $\R[\Delta]/(\Theta)$ is called an {\em Artinian reduction} of $\R[\Delta]$. 

	Since $\Delta$ is a $(d-1)$-sphere, the ring $\R[\Delta]$ is Gorenstein. Consequently, any Artinian reduction of $\R[\Delta]$ is a Poincar\'e duality algebra, with $\dim_\R \big(\R[\Delta]/(\Theta))_i=h_i(\Delta)$ for all $0\leq i\leq d$. Furthermore, letting $c=\sum_{v\in V} x_v$, the Hard Lefschetz theorem for polytopes and spheres \cite{Stanley80,McMullen96,Adiprasito-g-conjecture,PapadakisPetrotou, AdiprasitoPapadakisPetrotou,KaruXiao} implies that $\R[\Delta]/(\Theta, c)$ is also a finitely generated standard graded algebra, with $\dim_\R (\R[\Delta]/(\Theta,c))_i=g_i(\Delta)$ for all $0\leq i \leq \lceil d/2\rceil$ and all other graded components of $\R[\Delta]/(\Theta,c)$ vanish.
	
	Macaulay's theorem (see \cite[Theorem II.2.2]{Stanley96}) asserts that Hilbert functions of standard graded algebras are  $M$-sequences. To define an $M$-sequence, note that for all positive integers $n$ and $i$, there is a unique expression $$n=\binom{n_i}{i}+\binom{n_{i-1}}{i-1}+\dots+\binom{n_j}{j}, \quad \mbox{where } n_i>n_{i-1}>\dots>n_j\geq j\geq 1.$$ Define $n^{\langle i\rangle}:=\binom{n_i+1}{i+1}+\binom{n_{i-1}+1}{i}+\dots+\binom{n_j+1}{j+1}$ for $n>0$, and set $0^{\langle 0 \rangle}:=0$. A sequence of nonnegative integers $(a_0,a_1,\dots, a_m)$ is called an {\em $M$-sequence} if $a_0=1$ and $0\leq a_{i+1}\leq a_i^{\langle i \rangle}$ for all $1\leq i\leq m-1$. A weaker, but often more convenient, property of an $M$-sequence $(a_0,a_1,\dots, a_m)$ is the following: writing $a_i=\binom{x_i}{i}$ for a real number $x_i$, one has $0\leq a_{i+1}\leq \binom{x_i+1}{i+1}$ for all $1\leq i\leq m-1$.

	An immediate consequence of the Hard Lefschetz theorem and Macaulay's theorem is:
	
\begin{theorem} {\rm ($g$-theorem)}
Let $\Delta$ be a $(d-1)$-sphere. Then $(g_0(\Delta),g_1(\Delta),\dots,g_{\lfloor d/2 \rfloor}(\Delta))$ is an $M$-sequence.
\end{theorem}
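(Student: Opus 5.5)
The $g$-theorem follows directly from the two algebraic facts recalled just before it, the Hard Lefschetz theorem and Macaulay's theorem. The idea is to realize $(g_0,\dots,g_{\lfloor d/2\rfloor})$ as an initial segment of the Hilbert function of a standard graded algebra.

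Fix a generic embedding $p$ of the $(d-1)$-sphere $\Delta$ and let $\Theta=\Theta(p)$ be the corresponding l.s.o.p. Set $c=\sum_{v\in V}x_v$ and consider $A:=\R[\Delta]/(\Theta,c)$. This is a quotient of the polynomial ring $\R[X]$ by a homogeneous ideal, so it is generated in degree $1$ and $A_0=\R$; that is, $A$ is a standard graded $\R$-algebra.

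By the Hard Lefschetz theorem for spheres, as recalled above, multiplication by $c$ from $(\R[\Delta]/(\Theta))_{i-1}$ to $(\R[\Delta]/(\Theta))_{i}$ is injective for $i\le \lceil d/2\rceil$. Hence $\dim_\R A_i=h_i-h_{i-1}=g_i(\Delta)$ for $0\le i\le \lceil d/2\rceil$, and in particular for all $i\le\lfloor d/2\rfloor$.

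The Hilbert function $(\dim_\R A_i)_{i\ge 0}$ of a standard graded algebra is an $M$-sequence by Macaulay's theorem. Its truncation to indices $0,\dots,\lfloor d/2\rfloor$ is therefore also an $M$-sequence, because the defining conditions $a_0=1$ and $0\le a_{i+1}\le a_i^{\langle i\rangle}$ involve only consecutive terms and so survive truncation. This truncation is exactly $(g_0,\dots,g_{\lfloor d/2\rfloor})$.

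None of these steps needs real work once the cited results are available. The only substantive input is the Hard Lefschetz property for $\Z/2\Z$-homology spheres with respect to a generic l.s.o.p. and the element $c$; that is the deep theorem of Adiprasito and Papadakis--Petrotou, which we take as given. The one point to check is that the cited statement applies over $\R$ to $\Z/2\Z$-homology spheres. It does: such spheres are $\R$-homology spheres by \cite[Lemma 2.1]{KaruXiao}, and the Lefschetz results over characteristic $0$ are available in the cited works.
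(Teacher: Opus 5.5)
Your proposal is correct and is essentially the paper's argument. The paper likewise obtains the $g$-theorem as an immediate consequence of Hard Lefschetz, which makes $\R[\Delta]/(\Theta,c)$ a standard graded algebra with $\dim_\R(\R[\Delta]/(\Theta,c))_i=g_i$ for $i\le\lceil d/2\rceil$, combined with Macaulay's theorem on Hilbert functions of standard graded algebras.
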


The part of the $g$-theorem asserting that the $g$-numbers are nonnegative, together with the characterization of spheres satisfying $g_i=0$ for a given $i\leq \lfloor d/2\rfloor$ (see \cite{McMullenWalkup71,MuraiNevo2013}), is known as the Generalized Lower Bound Theorem. Recall that a $(d-1)$-sphere $\Delta$ is called {\em $(i-1)$-stacked }if there exists an $\R$-homology $d$-ball $B$ such that $\partial B=\Delta$ and $\skel_{d-i}(B)=\skel_{d-i}(\Delta)$. (In other words, all interior faces of $B$ have dimension at least $d-i+1$.)

\begin{theorem}\label{GLBT}
		Let $\Delta$ be a $(d-1)$-sphere. Then $g_i(\Delta)\geq 0$ for all $1\leq i\leq \lfloor \frac{d}{2}\rfloor$. Furthermore, $g_i(\Delta)=0$ for some $1\leq i\leq \lfloor \frac{d}{2}\rfloor$ if and only if $\Delta$ is $(i-1)$-stacked. 
\end{theorem}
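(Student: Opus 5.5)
The statement is the Generalized Lower Bound Theorem, so the plan is to split it into two parts and attack each separately. The first part is the inequality $g_i(\Delta)\geq 0$. The second is the characterization of equality via $(i-1)$-stackedness.

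For nonnegativity, I would use the Hard Lefschetz property recalled above. Fix a generic embedding, so that $\Theta$ is an l.s.o.p. and $A=\R[\Delta]/(\Theta)$ is a Poincar\'e duality algebra with $\dim A_i=h_i$. Hard Lefschetz for $\Z/2\Z$-homology spheres (Adiprasito; Papadakis--Petrotou; Karu--Xiao) says that multiplication by $c^{d-2i}\colon A_i\to A_{d-i}$ is an isomorphism for $i\leq d/2$. Hence multiplication by $c\colon A_{i-1}\to A_i$ is injective for $i\leq d/2$. Therefore $\dim(A/cA)_i=h_i-h_{i-1}=g_i\geq 0$. This is exactly the $M$-sequence statement of the $g$-theorem restricted to its first clause, so it could simply be quoted.

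For the equality case I would argue the two directions separately.

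For the direction ``$(i-1)$-stacked $\Rightarrow g_i=0$'', take the homology ball $B$ with $\partial B=\Delta$ and no interior faces of dimension $\leq d-i$. The relative complex $(B,\partial B)$ has $h$-vector with $h_j(B,\partial B)=h_{d+1-j}(B)$ by Macaulay/Gorenstein duality for homology balls. One also has $h_j(\Delta)-h_{j-1}(\Delta)=h_j(B)-h_{d+1-j}(B)$. Interior faces exist only in dimensions $\geq d-i+1$, so $h_j(B,\partial B)=0$ for $j\leq d-i+1$; equivalently $h_{d+1-j}(B)=0$ for those $j$. Taking $j=i$ then yields $g_i(\Delta)=0$, since $h_i(B)$ for $i\leq d/2$ is determined by the common skeleton and matches $h_i(\Delta)-h_{i-1}(\Delta)$. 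This direction is McMullen--Walkup bookkeeping.

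The converse, ``$g_i=0\Rightarrow (i-1)$-stacked'', is the main obstacle. I would follow Murai--Nevo. Let $B$ be the complex on $V(\Delta)$ whose faces are the sets all of whose $(d-i)$-subsets lie in $\Delta$, that is, the $(d-i)$-th clique-type extension. One must show that $B$ is a homology $d$-ball with boundary $\Delta$. The algebraic input: $g_i=0$ forces $(A/cA)_i=0$, and by the $M$-sequence property $(A/cA)_j=0$ for all $j\geq i$. In stress language, this says every affine $i$-stress is generated in low degree. Murai--Nevo convert this into the statement that the Stanley--Reisner ideal $I_\Delta$ is generated in degrees $\leq d-i+1$ modulo a linear form. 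They then show that $\R[B]$ is Cohen--Macaulay with canonical module related to $I_\Delta/I_B$, which identifies $B$ as a homology ball with $\partial B=\Delta$. Here I would rely on the cited references \cite{McMullenWalkup71, MuraiNevo2013} rather than reprove the homological algebra, citing the Hard Lefschetz input for homology spheres to extend their polytopal argument.
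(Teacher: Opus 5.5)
Your proposal follows the paper's route. Nonnegativity comes from Hard Lefschetz: multiplication by $c$ from $A_{i-1}$ to $A_i$ is injective for $i\le d/2$. The equality characterization is taken from McMullen--Walkup and Murai--Nevo, which the paper likewise just cites.

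A few expository slips do not affect correctness:
\begin{itemize}
\item In the easy direction, the vanishing of $h_i(B)$ should be justified by your own observation with $j=d+1-i$, i.e.\ $h_i(B)=h_{d+1-i}(B,\partial B)=0$. Appealing to $h_i(B)$ agreeing with $g_i(\Delta)$ there is circular.
\item The candidate ball should be written as $\Delta(d-i)=\{F:\skel_{d-i}(\overline{F})\subseteq\Delta\}$, which tests subsets of size $d-i+1$, not $d-i$.
\item When $g_i=0$ there are no nonzero affine $i$-stresses, so the phrase ``generated in low degree'' has no content.
\end{itemize}
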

	
	\subsection{Corollaries on the $g$-numbers and $\gamma$-numbers}

	Theorem \ref{GLBT} shows that a $(d-1)$-sphere with $g_i=0$ for some $i\leq \lfloor d/2\rfloor$ does not belong to $S(d-i,d-1)$. This immediately implies the following.
	\begin{corollary}\label{S(d-i, d-1) implies g_i>0}
		If $i\leq \lfloor d/2\rfloor$ and $\Delta\in S(d-i,d-1)$, then $g_i(\Delta)\geq 1$.
	\end{corollary}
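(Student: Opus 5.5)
The plan is to argue by contradiction, combining Theorem~\ref{GLBT} with the definition of $(i-1)$-stackedness. Let $i\leq \lfloor d/2\rfloor$ and $\Delta\in S(d-i,d-1)$. Since $g_i(\Delta)$ is an integer and $g_i(\Delta)\geq 0$ by Theorem~\ref{GLBT}, it suffices to rule out $g_i(\Delta)=0$. So suppose $g_i(\Delta)=0$. Then Theorem~\ref{GLBT} says $\Delta$ is $(i-1)$-stacked: there is an $\R$-homology $d$-ball $B$ with $\partial B=\Delta$ and $\skel_{d-i}(B)=\skel_{d-i}(\Delta)$. Equivalently, every interior face of $B$ has dimension at least $d-i+1$.

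The key step is to find a missing face of $\Delta$ of dimension $>d-i$. Since $B$ is a homology $d$-ball, it has at least one interior face; for instance, any facet of $B$ is interior, because a $d$-face cannot lie in the $(d-1)$-dimensional boundary. Choose an interior face $\tau$ of $B$ that is minimal under inclusion among interior faces. Then:
\begin{itemize}
\item $\dim\tau\geq d-i+1$, since it is interior;
\item every proper subset of $\tau$ is a face of $B$ that, by minimality, is not interior, so it lies in $\partial B=\Delta$;
\item $\tau\notin\Delta$, since $\tau$ is interior and the boundary faces are exactly the faces of $\Delta$.
\end{itemize}
Hence $\tau$ is a missing face of $\Delta$ of dimension at least $d-i+1>d-i$, contradicting $\Delta\in S(d-i,d-1)$. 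Therefore $g_i(\Delta)\geq 1$.

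The only subtle point is the identification of the non-interior faces of $B$ with the faces of $\Delta=\partial B$. This follows from the definitions: the boundary complex consists of the faces of $B$ contained in $\partial B$, and interior faces are precisely the faces of $B$ not in $\partial B$. One also needs $\tau$ to be nonempty with all of its vertices in $\Delta$, so that it qualifies as a missing face of $\Delta$. This holds because $\dim\tau\geq d-i+1\geq 1$, so the singletons in $\tau$ are proper subsets of $\tau$ and hence lie in $\Delta$.
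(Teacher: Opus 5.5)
Your proof is correct and follows the paper's route. The paper deduces the corollary immediately from Theorem~\ref{GLBT}, together with the observation (made in the introduction) that an $(i-1)$-stacked sphere has a missing face of dimension at least $d-i+1$; your minimal-interior-face argument simply supplies the justification for that observation.
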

	
	In the case $i=2$, Nevo and Novinsky provided the following characterization of spheres in $S(d-2, d-1)$ with $g_2=1$; see \cite{NevoNovinsky}.
		\begin{theorem} \label{thm:Nevo-Novinsky}
			Let $d\geq 4$. Suppose $\Delta\in S(d-2, d-1)$ satisfies $g_2(\Delta) = 1$. Then either $\Delta=\partial \sigma^i *\partial \sigma^{d-i}$  for some $2\leq i\leq d-2$, or $\Delta= C* \partial \sigma^{d-2}$, where $C$ is a cycle.
	\end{theorem}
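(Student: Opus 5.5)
The statement is quoted from \cite{NevoNovinsky}, and I would reconstruct it through rigidity (affine $2$-stresses), arguing by induction on $d$. The key input is the standard interpretation of $g_2$: for a generic embedding $p$, $g_2(\Delta)$ equals the dimension of the space of affine $2$-stresses of $(\Delta,p)$. A further input is the monotonicity $g_2(\Delta)\ge g_2(\st(v))=g_2(\lk v)$ for every vertex $v$, which holds because stresses of a star extend by zero to stresses of $\Delta$.

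\textbf{Step 1 (the unique stress).} Since $g_2(\Delta)=1$, the stress space is spanned by a single stress $\omega$. Let $W$ be the set of vertices in the support of $\omega$. For $v\notin W$, monotonicity forces $g_2(\lk v)=0$, so $\lk v$ is stacked (using Theorem~\ref{GLBT} when $d-1\ge 4$, and trivially when $d=4$). For $v\in W$ we get $g_2(\lk v)\le 1$.

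\textbf{Step 2 (excluding vertices outside the support).} Here I would use the hypothesis $\Delta\in S(d-2,d-1)$. Suppose $v\notin W$, so $\lk v$ is stacked. If $\lk v=\partial\sigma^{d-1}$, then $\lk v$ is a missing $(d-1)$-face of $\Delta$, which is forbidden. Otherwise $\lk v$ has a missing $(d-2)$-face $\tau$ coming from a stacking operation. The flag-like condition forces $\tau\in\Delta$, so $\tau$ together with its neighbours carries a local stress near $v$. I would try to show this contradicts $v\notin W$, so that in fact $W=V(\Delta)$ and every vertex link has $g_2=1$.

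\textbf{Step 3 (induction).} With $d\ge 5$ and every $\lk v$ a $(d-2)$-sphere with $g_2=1$, I would apply induction. The issue is that $\lk v$ need not lie in $S(d-3,d-2)$. So I would first classify links that do have a missing $(d-2)$-face $\tau$. In that case $\tau\cup v$ is a face or $\tau$ is missing in $\Delta$, and a cut along $\partial\bar\tau$ splits off a part. Additivity of $g_2$ under connected sums, together with $g_2\ge 1$ for the non-stacked side, shows the link is a stacked modification of $\partial\sigma^i*\partial\sigma^{d-1-i}$ or of $C*\partial\sigma^{d-3}$. Step 2 then kills the stacked modification. Once every link has one of the two target forms, I would glue: adjacent links must agree on the links of edges. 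This forces $\Delta$ to be a join $\partial\sigma^i*\partial\sigma^{d-i}$ or $C*\partial\sigma^{d-2}$.

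\textbf{Base case and main obstacle.} For $d=4$, the condition $g_2=1$ gives $f_1=4f_0-9$. I would combine this with Step 1 and a direct count of edge degrees: edge links are cycles, and there are no missing tetrahedra. The count should yield $C*\partial\sigma^2$ or $\partial\sigma^2*\partial\sigma^2$. I expect Step 2 to be the main obstacle, namely turning ``no missing faces of dimension $\ge d-1$'' into the statement that every vertex lies in the support of the unique stress. That is where the hypothesis must be used sharply. I would attack it first via the cone lemma, viewing stresses of $\st(v)$ as stresses of $\lk v$ under a projection from $v$, and then locate a nonzero stress supported near $v$.
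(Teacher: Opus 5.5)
The paper gives no proof of this theorem; it is quoted from \cite{NevoNovinsky}. Judged on its own terms, your plan has a genuine gap. Step~3 rests on the claim that every vertex link of $\Delta$ has $g_2=1$, and that claim is false for the very spheres being characterized. In $C*\partial\sigma^{d-2}$, each vertex $v$ of the cycle has $\lk(v)=\partial\sigma^1*\partial\sigma^{d-2}$. Its $h$-polynomial is $(1+t)(1+t+\dots+t^{d-2})$, so $g_2(\lk(v))=0$; this link is stacked, the boundary of a bipyramid. The same holds for the three vertices of the $\partial\sigma^2$ factor of $\partial\sigma^2*\partial\sigma^{d-2}$.

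Consequently, Step~2 cannot derive a contradiction from a stacked, non-simplex vertex link. Moreover, an induction that assumes all links have $g_2=1$ and then glues links of the two target types would exclude $C*\partial\sigma^{d-2}$ itself. Two slips lead you there:
\begin{itemize}
\item Step~1 only gives $v\notin W\Rightarrow g_2(\lk(v))=0$. The converse fails, so $W=V(\Delta)$ does not yield $g_2(\lk(v))=1$. In fact $W=V(\Delta)$ holds automatically: by \cite{Z-rigidity}, quoted in Section~6, every edge of a sphere in $S(d-2,d-1)$ participates in an affine $2$-stress. So this step carries no information.
\item Your deduction about $\tau$ is reversed. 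If the missing $(d-2)$-face $\tau$ of $\lk(v)$ were a face of $\Delta$, then $\tau\cup v$ would be a missing $(d-1)$-face of $\Delta$. So the hypothesis forces $\tau$ to be a missing $(d-2)$-face of $\Delta$, which is allowed. This is the information a stacked link actually provides. A correct argument must accommodate such vertices and exploit these missing faces, not rule them out.
\end{itemize}

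Independently, you have not argued the base case $d=4$, and it is where much of the difficulty lies.
\begin{itemize}
\item All vertex links are then $2$-spheres, so $g_2(\lk(v))=0$ for every $v$ and Step~1 is vacuous.
\item Contrary to your ``trivially'', $g_2=0$ does not make a $2$-sphere stacked; the octahedron is a counterexample.
\item The relation $f_1=4f_0-9$ is also satisfied by non-prime $3$-spheres with $g_2=1$, such as connected sums of $C*\partial\sigma^2$ with stacked spheres. You do not say how the absence of missing $3$-faces would enter an edge-degree count.
\end{itemize}
This case needs a structural idea, not a count. One option is edge contractions, as in Section~4. Contracting an edge $e$ that lies in no missing face lowers $g_2$ by $g_1(\lk(e))$, so the contracted sphere is stacked unless $\lk(e)$ is the boundary of a simplex.
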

	
	McMullen's integral formula provides additional relations between the $g$-numbers of a pure simplicial complex  and the $g$-numbers of its vertex links; see \cite[p.~183]{McMullen70} and \cite[Proposition 2.3]{Swartz05}. Recall that when $d$ is odd, every $(d-1)$-sphere satisfies $g_{\lceil d/2\rceil}=0$. 
	\begin{lemma}\label{McMullen's formula}
	Let $\Delta$ be a pure simplicial complex of dimension $d-1$. Then for every $0\leq k\leq \lfloor \frac{d-1}{2}\rfloor$,
\[\sum_{v\in V(\Delta)} g_k(\lk(v))=(k+1)g_{k+1}(\Delta)+(d+1-k)g_k(\Delta).\]
	\end{lemma}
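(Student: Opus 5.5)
The plan is to prove McMullen's formula by double counting faces, passing through the $h$-vectors of the vertex links. Work with $h$-polynomials. Every $(i-1)$-face of $\lk(v)$ corresponds to an $i$-face of $\Delta$ containing $v$. Each $i$-face of $\Delta$ contains $i+1$ vertices, so
\[
\sum_{v} f_{i-1}(\lk(v)) = (i+1) f_i(\Delta).
\]

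The first step converts this into a statement about $h$-polynomials. Each $\lk(v)$ is pure of dimension $d-2$, since $\Delta$ is pure. Writing $F(t)=\sum_{i=0}^{d} f_{i-1}(\Delta)t^{d-i}$, we have $h(\Delta,\cdot)$ determined by $\sum_j h_j t^{d-j}=F(t-1)$. The face-count identity above is the statement that $\sum_v F_{\lk(v)}(s)$ equals the derivative $F'(s)$, up to the obvious reindexing: the coefficient of $s^{d-1-i}$ on the left is $\sum_v f_{i-1}(\lk v)$, and in $F'$ it is $(d-(i+1)+1)\cdot$(coefficient)… so the index bookkeeping needs care. The cleaner route is to use the reversed polynomial $\tilde F(s)=\sum_i f_{i-1}s^i$, for which the identity reads exactly $\sum_v \tilde F_{\lk v}(s)=\tilde F'(s)$. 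Here $\tilde F(s)=\sum_j h_j s^j(1+s)^{d-j}$ follows from the defining identity after substituting $t=1+1/s$ and multiplying by $s^d$.

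The second step differentiates. Differentiating $\sum_j h_j s^j(1+s)^{d-j}$ and re-expanding in the basis $s^j(1+s)^{d-1-j}$ yields the classical relation
\[
\sum_v h_j(\lk v) = (j+1)h_{j+1}(\Delta) + (d-j)h_j(\Delta), \qquad 0\le j\le d-1.
\]
This only requires that $s^j(1+s)^{d-1-j}$ forms a basis, which uniquely determines the link $h$-numbers.

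The third step takes differences. Apply the relation at $j=k$ and at $j=k-1$, and subtract:
\[
\sum_v g_k(\lk v)=(k+1)h_{k+1}+(d-k)h_k - k h_k-(d-k+1)h_{k-1}.
\]
The goal is the right-hand side $(k+1)(h_{k+1}-h_k)+(d+1-k)(h_k-h_{k-1})$. Expanding this target gives
\[
(k+1)h_{k+1} + (d-2k)h_k - (d+1-k)h_{k-1},
\]
and the subtraction above gives the same expression. For $k=0$ the formula is simply $f_0=h_1+d$, which agrees with $h_1=f_0-d$.

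Finally, the range $k\le\lfloor (d-1)/2\rfloor$ is needed only so that $g_k(\lk v)$, with $\lk v$ of dimension $d-2$, and $g_{k+1}(\Delta)$ are within the defined ranges of the $g$-vectors; the identity itself holds for all $k$. The main obstacle is the index bookkeeping in the second step, that is, verifying the derivative identity coefficientwise. Everything else is linear algebra.
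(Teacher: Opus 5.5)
Your proof is correct and is essentially the standard argument. The paper itself only cites McMullen and Swartz for this lemma, but your identity $\tilde F'=\sum_v \tilde F_{\lk v}$ is exactly the relation $\frac{\mathrm{d}}{\mathrm{d}t}F(\Delta,t)=(1-t)^{-2}\sum_v F(\lk v,t)$ from the proof of Lemma~\ref{lem:gamma-analog-of-McMullen}, rewritten via $F(\Delta,t)=\tilde F\bigl(t/(1-t)\bigr)$ and followed by the same coefficient comparison in a basis; the step you flag as the main obstacle is the one-line computation $\frac{\mathrm{d}}{\mathrm{d}s}\bigl(s^j(1+s)^{d-j}\bigr)=j\,s^{j-1}(1+s)^{d-j}+(d-j)\,s^j(1+s)^{d-1-j}$, and you can delete the abandoned attempt with $F(t)$.
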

	
\begin{corollary}\label{cor: S(k, 2k)}
		Let $k\geq 1$ and let $\Delta\in S(k, 2k)$. Then $g_k(\Delta)\geq \frac{f_0(\Delta)}{k+2}$. In particular, if $\Delta\in S(2,4)$, then $g_2(\Delta)\geq \frac{1}{4}f_0(\Delta)$.
	\end{corollary}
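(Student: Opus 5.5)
Set $d=2k+1$, so $\Delta$ is a $2k$-sphere, and apply McMullen's formula (Lemma~\ref{McMullen's formula}) with index $k$. The hypothesis $k\le\lfloor (d-1)/2\rfloor=k$ holds, and we obtain
\[\sum_{v\in V(\Delta)} g_k(\lk(v))=(k+1)g_{k+1}(\Delta)+(2k+2-k)g_k(\Delta).\]
Since $d=2k+1$ is odd, $g_{k+1}(\Delta)=g_{\lceil d/2\rceil}(\Delta)=0$ by the Dehn--Sommerville relations. The right-hand side therefore reduces to $(k+2)g_k(\Delta)$.

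It remains to bound the left-hand side from below by $f_0(\Delta)$, that is, to show $g_k(\lk(v))\ge 1$ for every vertex $v$. Each vertex link $\lk(v)$ is a $(2k-1)$-sphere. As noted in the preliminaries, the link of a $0$-face of a sphere in $S(k,2k)$ lies in $S(k,2k-1)$, so $\lk(v)$ has no missing faces of dimension larger than $k$. Writing $d'=2k$, we have $S(k,2k-1)=S(d'-k,d'-1)$ and $k\le\lfloor d'/2\rfloor$. Corollary~\ref{S(d-i, d-1) implies g_i>0} then gives $g_k(\lk(v))\ge 1$. (Equivalently: a sphere with $g_k=0$ is $(k-1)$-stacked by Theorem~\ref{GLBT}, and so has a missing face of dimension $\ge d'-k+1=k+1$, which is excluded.)

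Summing over all vertices gives $(k+2)g_k(\Delta)\ge f_0(\Delta)$, which is the claim. Taking $k=2$ gives $g_2(\Delta)\ge f_0(\Delta)/4$ for $\Delta\in S(2,4)$.

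The only step needing care is the dimension bookkeeping. First, McMullen's formula must be applicable at index $k$, and it is, since $\lfloor 2k/2\rfloor=k$. Second, the corollary must be applied to the link in dimension $d'=2k$ with $i=k$. Beyond checking these indices there is no real obstacle.
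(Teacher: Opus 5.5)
Your proposal is correct and is the paper's own proof. You apply McMullen's formula (Lemma~\ref{McMullen's formula}) at index $k$ with $d=2k+1$, use the vanishing of $g_{k+1}$ for odd $d$, and bound each $g_k(\lk(v))\geq 1$ by applying Corollary~\ref{S(d-i, d-1) implies g_i>0} to the vertex links, which lie in $S(k,2k-1)$; your index bookkeeping is also correct.
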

	\begin{proof}
		The vertex links of $\Delta$ lie in $S(k, 2k-1)$. Hence, by Corollary \ref{S(d-i, d-1) implies g_i>0}, we have $g_k(\lk(v))\geq 1$ for all $v\in \Delta$. Applying McMullen's formula and using the fact that $g_{k+1}(\Delta)=0$, we obtain $$g_k(\Delta)=\frac{1}{k+2}\sum_{v\in V(\Delta)} g_k(\lk(v))\geq \frac{f_0(\Delta)}{k+2}.$$
	\end{proof}
	
	Nevo \cite{Nevo2009} conjectured a significantly stronger lower bound on the $g_2$-numbers of spheres in $S(2, 4)$.
	\begin{conjecture}\label{conj: S(2,4)}
		Let $\Delta\in S(2,4)$. Then $g_2(\Delta)\geq g_1(\Delta)$.
	\end{conjecture}
	
	We conclude this section with a discussion of the face numbers of flag spheres. In this setting, the $\gamma$-numbers may be viewed as analogs of the $g$-numbers for general spheres. In particular, the following analog of McMullen's formula holds for the $\gamma$-numbers. Although the proof follows standard techniques, we were unable to find this result explicitly stated in the literature, and so we include it here for completeness.

Let $\Delta$ be a $(d-1)$-sphere. The $\gamma$-numbers of $\Delta$ are defined by the relation $$h(\Delta,t)=\sum_{i=0}^d h_i(\Delta)t^i=\sum_{k=0}^{\lfloor d/2\rfloor} \gamma_k(\Delta) t^k(1+t)^{d-2k}.$$
For example, one easily checks that $\gamma_0=1$, $\gamma_1=f_0-2d$, and $\gamma_2=f_1-(2d-3)f_0+2d(d-2)$.
Recall that, under the standard $\N$-grading (where $\N$ denotes the set of all nonnegative integers), the Hilbert series\footnote{For background on Hilbert series of Stanley--Reisner rings, see Sections II.1 and II.2 of Stanley's book \cite{Stanley96}.} $F(\Delta,t)$ of the Stanley--Reisner ring $\R[\Delta]$ is given by $h(\Delta, t)/(1-t)^d$.

	\begin{lemma} \label{lem:gamma-analog-of-McMullen}
		Let $\Delta\in S(d, d-1)$. Then for every $0\leq i\leq \lfloor (d-1)/2 \rfloor$, $$\sum_{v\in V(\Delta)} \gamma_i(\lk(v))=(i+1)\gamma_{i+1}(\Delta)+(2d-4i)\gamma_i(\Delta).$$
	\end{lemma}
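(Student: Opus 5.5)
The plan is to prove a generating-function identity relating $\sum_v h(\lk(v),t)$ to $h(\Delta,t)$, then substitute the $\gamma$-expansions on both sides and compare coefficients in the basis $t^j(1+t)^{d-1-2j}$.

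\emph{Step 1: the $h$-version.} Each $(i-1)$-face of $\lk(v)$ corresponds to an $i$-face of $\Delta$ containing $v$. Hence $\sum_v f_{i-1}(\lk(v))=(i+1)f_i(\Delta)$, or equivalently $\sum_v f_{i-2}(\lk(v))= i f_{i-1}(\Delta)$. Converting from $f$ to $h$ via the defining identity (with $d-1$ in place of $d$ for the links) gives the classical relation
$$\sum_{v} h_i(\lk(v))=(i+1)h_{i+1}(\Delta)+(d-i)h_i(\Delta).$$
This is the same bookkeeping as in the proof of McMullen's formula, Lemma \ref{McMullen's formula}. Alternatively, one can differentiate $F(\Delta,t)=h(\Delta,t)/(1-t)^d$. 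In polynomial form the relation reads
$$\sum_{v} h(\lk(v),t)= d\,h(\Delta,t)+(1-t)\,h'(\Delta,t).$$

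\emph{Step 2: plug in the $\gamma$-expansion.} The links are $(d-2)$-spheres, so $h(\lk(v),t)=\sum_j \gamma_j(\lk(v))\,t^j(1+t)^{d-1-2j}$. For $\Delta$, apply the operator $h\mapsto d h+(1-t)h'$ to each basis term $t^k(1+t)^{m}$ with $m=d-2k$. Factoring out $t^{k-1}(1+t)^{m-1}$ leaves the bracket
$$d\,t(1+t)+(1-t)\bigl(k(1+t)+mt\bigr)=k+(2d-2k)t+kt^2=k(1+t)^2+(2d-4k)t,$$
where we used $d-k-m=k$. Hence the term becomes
$$k\,t^{k-1}(1+t)^{d-1-2(k-1)}+(2d-4k)\,t^k(1+t)^{d-1-2k}.$$
Both pieces lie in the $\gamma$-basis for degree $d-1$. (When $k=d/2$ the second coefficient vanishes, so no negative exponent appears.)

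\emph{Step 3: compare coefficients.} Summing over $k$, the coefficient of $t^i(1+t)^{d-1-2i}$ is $(i+1)\gamma_{i+1}(\Delta)+(2d-4i)\gamma_i(\Delta)$. The polynomials $t^j(1+t)^{d-1-2j}$, $0\le j\le\lfloor (d-1)/2\rfloor$, are linearly independent: they have distinct lowest-degree terms $t^j$. So the expansion of a polynomial in this basis is unique, and comparing with $\sum_v\sum_i\gamma_i(\lk(v))t^i(1+t)^{d-1-2i}$ yields the lemma for every $0\le i\le\lfloor (d-1)/2\rfloor$.

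The only delicate point is Step 2: choosing to split off $t^{k-1}(1+t)^{m-1}$ so that the quadratic bracket visibly decomposes as $k(1+t)^2+(2d-4k)t$. Once that is seen, everything else is routine.
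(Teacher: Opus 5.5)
Your proof is correct and is essentially the paper's argument. Your identity $\sum_v h(\lk(v),t)=d\,h(\Delta,t)+(1-t)h'(\Delta,t)$ is the paper's $\frac{\mathrm{d}}{\mathrm{d}t}F(\Delta,t)=\sum_v h(\lk(v),t)/(1-t)^{d+1}$ with the denominator cleared; the paper derives it from the $\N^n$-graded Hilbert series and $\partial_c$, you by double counting pairs $(v,F)$. After that, both proofs use the same splitting $k+(2d-2k)t+kt^2=k(1+t)^2+(2d-4k)t$ and compare coefficients in the basis $t^i(1+t)^{d-1-2i}$.
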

	
\begin{proof}
		Assume that $V(\Delta)=[n]$.  Consider first the $\N^n$-graded Hilbert series of $\R[\Delta]$. For  $a=(a_1, \dots, a_n)\in \N^n$,  write $t_1^{a_1}\dots t_n^{a_n}=\mathbf{t}^a$ and $\deg \mathbf{t}^a=a$. Then
$$F(\Delta, t_1, \dots, t_n)=\sum_{a\in \N^n} \dim \R[\Delta]_{a} \mathbf{t}^a=\sum_{\sigma\in \Delta}\prod_{v\in \sigma} \frac{t_v}{1-t_v}.$$
Applying the differential operator $\partial_c=\sum_{v=1}^n \partial t_v$, we obtain:
$$\partial_c F(\Delta, t_1, \dots, t_n)=
	\sum_{v=1}^n \frac{1}{(1-t_v)^2}\sum_{\tau\in \lk(v)} \prod_{j\in \tau}\frac{t_j}{1-t_j}.$$
Setting $t_1=\dots =t_n=t$, we infer  $$\frac{\mathrm{d}}{\mathrm{d}t}F(\Delta, t)=\frac{1}{(1-t)^2}\sum_{v=1}^n \frac{h(\lk(v), t)}{(1-t)^{d-1}}=\sum_{v=1}^n \frac{\sum_{k=0}^{\lfloor (d-1)/2\rfloor}\gamma_k(\lk(v))t^k(1+t)^{d-1-2k}}{(1-t)^{d+1}}.$$
On the other hand, differentiating $F(\Delta, t)=\frac{h(\Delta, t)}{(1-t)^d}=\frac{\sum_{i=0}^{\lfloor d/2\rfloor} \gamma_i(\Delta)t^i(1+t)^{d-2i}}{(1-t)^d}$ with respect to $t$ yields
		$$\frac{\mathrm{d}}{\mathrm{d}t}F(\Delta, t)=\frac{\sum_{i=0}^{\lfloor d/2\rfloor} \gamma_i(\Delta)t^{i-1}(1+t)^{d-2i-1}(i+(2d-2i)t+it^2)}{(1-t)^{d+1}}.$$
Note that $i+(2d-2i)t+it^2=(2d-4i)t+i(1+t)^2$. Comparing coefficients of $t^i(1+t)^{d-1-2i}$ in the two expressions for $\frac{\mathrm{d}}{\mathrm{d}t}F(\Delta, t)$, we conclude that  $\sum_{v=1}^n \gamma_i(\lk(v))=(i+1)\gamma_{i+1}(\Delta)+(2d-4i)\gamma_i(\Delta)$ for all $1\leq i\leq \lfloor (d-1)/2\rfloor$.
	\end{proof}
	
	By definition, a $2k$-sphere satisfies $\gamma_{k+1}=0$. Together with Lemma \ref{lem:gamma-analog-of-McMullen}, this yields the following result; see also \cite[Corollary 2.2.2]{Gal05}.
	\begin{corollary}
		If $\gamma_k\geq 0$ holds for all flag $(2k-1)$-spheres, then $\gamma_k\geq 0$ holds for all flag $2k$-spheres. In particular, the Davis--Okun theorem \cite{DavisOkun} implies that $\gamma_2\geq 0$ holds for all flag $4$-spheres. Furthermore,  all flag $5$-spheres satisfy $3\gamma_3+4\gamma_2\geq 0.$
	\end{corollary}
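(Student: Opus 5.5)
The corollary has three parts. I will derive each from Lemma \ref{lem:gamma-analog-of-McMullen}, applied to a flag $2k$-sphere $\Delta$ (so $d=2k+1$) with $i=k$. In that case $\lfloor (d-1)/2\rfloor = k$, so the lemma is available, and it reads
\[
\sum_{v\in V(\Delta)} \gamma_k(\lk(v)) = (k+1)\gamma_{k+1}(\Delta) + (2(2k+1)-4k)\,\gamma_k(\Delta) = (k+1)\gamma_{k+1}(\Delta) + 2\gamma_k(\Delta).
\]

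For the first part, the key observation is that $\gamma_{k+1}(\Delta)=0$. By definition $\gamma_j$ is only defined for $j\le \lfloor d/2\rfloor = k$, so we set $\gamma_{k+1}:=0$. The proof of Lemma \ref{lem:gamma-analog-of-McMullen} compares coefficients in a sum indexed up to $\lfloor d/2\rfloor$, so with this convention the identity holds as written. This gives
\[
\gamma_k(\Delta) = \tfrac12 \sum_v \gamma_k(\lk(v)).
\]
Each vertex link of a flag sphere is a flag sphere of dimension one less, since links of flag complexes are flag and links in homology spheres are homology spheres. So each link is a flag $(2k-1)$-sphere, and the hypothesis gives $\gamma_k(\lk(v))\ge 0$. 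Hence $\gamma_k(\Delta)\ge 0$.

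For the second part, take $k=2$. Davis--Okun \cite{DavisOkun} proves the Charney--Davis conjecture for flag $3$-spheres, and $\gamma_2 = g_2 - g_1 + g_0$ there. So $\gamma_2\ge 0$ for all flag $3$-spheres, and the first part yields $\gamma_2\ge 0$ for flag $4$-spheres.

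For the third part, let $\Delta$ be a flag $5$-sphere, so $d=6$, and apply the lemma with $i=2$. Here $\lfloor 5/2\rfloor = 2$, so this is allowed, and the lemma gives
\[
\sum_v \gamma_2(\lk(v)) = 3\gamma_3(\Delta) + (12-8)\gamma_2(\Delta) = 3\gamma_3 + 4\gamma_2.
\]
The links are flag $4$-spheres, so by the second part each term on the left is nonnegative. Therefore $3\gamma_3+4\gamma_2\ge 0$.

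The only delicate point is justifying $\gamma_{k+1}=0$ for odd $d$ within the lemma's identity. In the derivative computation, the term with $i$ must satisfy $i\le\lfloor d/2\rfloor$, and for $d=2k+1$ no term $\gamma_{k+1}t^{k}(1+t)^{-1}$ arises. The coefficient comparison at $t^k(1+t)^{0}$ then involves only the $\gamma_k$ contribution $(2d-4k)t$ together with the $\gamma_{k+1}$ contribution, which is absent. So the identity holds with $\gamma_{k+1}=0$.
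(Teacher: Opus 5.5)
Your proposal is correct and is the same argument the paper uses. You apply Lemma~\ref{lem:gamma-analog-of-McMullen} with $d=2k+1$, $i=k$; since $\gamma_{k+1}=0$ for a $2k$-sphere, this gives $\sum_v\gamma_k(\lk(v))=2\gamma_k(\Delta)$. You then invoke Davis--Okun for $k=2$, and finally apply the lemma with $d=6$, $i=2$ to a flag $5$-sphere, whose vertex links are flag $4$-spheres. Your closing remark misstates the absent term slightly: after splitting $i+(2d-2i)t+it^2=(2d-4i)t+i(1+t)^2$, the would-be contribution is $(k+1)\gamma_{k+1}t^k(1+t)^0$, not $\gamma_{k+1}t^k(1+t)^{-1}$. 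Your conclusion that this term simply does not occur is nonetheless right.
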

	
	\section{The stress theory}
	\subsection{The affine stress spaces}	
	Continuing with the notation of Section 2, let $\Delta$ be a simplicial complex of dimension $d-1$ and let $p:V=V(\Delta)\to\R^d$ be an embedding. Let $X =\{x_v : v\in V\}$, let $\F\subseteq \R$ be a field that contains all coordinates $\{p(v)_j : v\in V, j\in [d]\}$, and let $\F[X]$ be the corresponding polynomial ring.

For each $v\in V$, consider the differential operator  $\partial_{x_v}:=\frac{\partial}{\partial{x_v}}$ acting on $\F[X]$. More generally, if $\mu=x_{i_1}\cdots x_{i_s}\in \F[X]$ is a monomial, define $\partial_\mu : \F[X] \to \F[X]$ by $\rho \mapsto \partial_{x_{i_1}}\cdots\partial_{x_{i_s}}\rho$. If $\ell =\sum_{v\in V} \ell_v x_v$ is a linear form in $\F[X]$, define
	$$\partial_{\ell} : \F[X]\to\F[X] \quad \mbox{by} \quad \rho \mapsto \sum_{v\in V}\ell_v\cdot\partial_{x_v}\rho=\sum_{v\in V}\ell_v\frac{\partial \rho}{\partial{x_v}}.$$
	For a monomial $\mu\in\F[X]$, we define its {\em support} by $\supp(\mu)=:\{v\in V : x_v|\mu\}$.

	\begin{definition}
		A homogeneous polynomial $\lambda=\lambda(X)=\sum_\mu \lambda_\mu \mu\in\F[X]$ of degree $k$ is called an {\em affine $k$-stress} on $(\Delta, p)$ if it satisfies the following conditions:
		\begin{itemize}
			\item Every nonzero term $\lambda_\mu \mu$ of $\lambda$ is supported on a face of $\Delta$; that is, $\supp(\mu)\in\Delta$.
			\item $\partial_{\theta_i}\lambda=0$ for all $i=1,\ldots, d$.
			\item $\partial_{c}\lambda=\sum_{v\in V}\partial_{x_v}\lambda=0$.
		\end{itemize}
	We say that a face $\tau$ {\em participates} in a stress $\omega$, if there is a nonzero monomial $\mu$ of $\omega$ with $\tau\subseteq \supp(\mu)$. The {\em support} of $\omega$ is the subcomplex of $\Delta$ consisting of all faces that participate in $\omega$.
	\end{definition}
	
	The set of affine $k$-stresses on $\Delta$ forms a vector space over $\F$, which we denote by $\Stress^a_k(\Delta, p;\F)$. When the context is clear, we sometimes omit $p$ and $\F$ from the notation. The stress space $\Stress^a_i(\Delta,p;\F)$ coincides with a certain graded component of the Macaulay inverse system of $I_\Delta+(\Theta(p),c)\subseteq \F[X]$; see \cite{MNZ} for more details. In particular, the Hard Lefshetz theorem for polytopes and spheres admits the following (weaker) restatement in the language of stress spaces. 

	\begin{theorem}\label{thm: Lefschetz}
		Let $(\Delta, p)$ be a $(d-1)$-sphere with a generic or natural embedding $p$. Then for every $1\leq k\leq \lfloor d/2\rfloor$, we have $\dim_\F \Stress_k^a(\Delta,p;\F)=g_k(\Delta).$ Here $\F\subseteq \R$ is any field containing $\Q(p(v)_j : v\in V(\Delta), j\in [d])$.
	\end{theorem}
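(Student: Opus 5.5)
This statement is the stress-space form of the Hard Lefschetz theorem. The plan is to identify $\Stress^a_k(\Delta,p;\F)$ with the degree-$k$ part of the Macaulay inverse system of the ideal $J:=I_\Delta+(\Theta(p),c)$. The dimension count then follows from the algebraic statement, recalled in Section 2, that $\dim(\F[\Delta]/(\Theta,c))_k=g_k(\Delta)$ for $1\le k\le\lfloor d/2\rfloor$.

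\emph{Step 1: perfect pairing.} Consider the apolarity pairing $\F[X]_k\times\F[X]_k\to\F$ given by $(\mu,\lambda)\mapsto \partial_\mu\lambda$, extended bilinearly. Distinct monomials are orthogonal, and $\partial_\mu\mu\neq 0$ (characteristic zero), so the pairing is nondegenerate. Hence $\dim (\F[X]/J)_k=\dim J_k^{\perp}$, where
\[
J_k^\perp=\{\lambda\in\F[X]_k:\partial_\phi\lambda=0 \text{ for all }\phi\in J_k\}.
\]

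\emph{Step 2: $J_k^\perp=\Stress^a_k$.} The key observation is that $\lambda\in J_k^\perp$ if and only if $\partial_\phi\lambda=0$ (as a polynomial) for every generator $\phi$ of $J$.

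For the forward direction, suppose $\partial_\phi\lambda=0$ for each generator $\phi$. Then for any $\psi$ we have $\partial_{\psi\phi}\lambda=\partial_\psi\partial_\phi\lambda=0$, so $\lambda$ is killed by all of $J_k$. For the converse, suppose $\lambda\perp J_k$ and $\phi$ is a generator of degree $j\le k$. For every monomial $\psi$ of degree $k-j$, the constant $\partial_\psi\partial_\phi\lambda$ vanishes. A form of degree $k-j$ all of whose degree-$(k-j)$ derivatives vanish is zero, so $\partial_\phi\lambda=0$.

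Now apply this to each type of generator:
\begin{itemize}
\item For the linear generators $\theta_i$ and $c$, the condition $\partial_\phi\lambda=0$ gives exactly the second and third conditions in the definition of an affine stress.
\item For a missing-face monomial $x_\tau$, the condition $\partial_{x_\tau}\lambda=0$ says that no monomial of $\lambda$ is divisible by $x_\tau$. Every non-face contains a missing face, so this is the same as saying every monomial of $\lambda$ is supported on a face of $\Delta$. This is the first condition.
\end{itemize}
Since $\F[\Delta]/(\Theta,c)=\F[X]/J$, Steps 1 and 2 together give $\dim_\F \Stress^a_k=\dim_\F(\F[\Delta]/(\Theta,c))_k$.

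\emph{Step 3: field of definition.} It remains to see that this dimension is $g_k(\Delta)$ over $\F$ and not just over $\R$. The ideal $J$ is generated by polynomials with coefficients in $\F$, so $\dim_\F(\F[X]/J)_k=\dim_\R(\R[X]/J\R[X])_k$. Indeed, this is the corank of a linear map defined over $\F$, and rank does not change under field extension. So it suffices to know the dimension over $\R$.

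Over $\R$, genericity (respectively, naturality) of $p$ ensures that $\Theta$ is an l.s.o.p. The quotient $\R[\Delta]/(\Theta)$ is Gorenstein with Hilbert function $h$. By the Hard Lefschetz property for $c$, multiplication by $c$ from degree $k-1$ to degree $k$ is injective for $k\le\lfloor d/2\rfloor$. Therefore
\[
\dim_\R\bigl(\R[\Delta]/(\Theta,c)\bigr)_k=h_k-h_{k-1}=g_k .
\]

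The only real input is the Hard Lefschetz theorem for spheres with respect to the specific element $c=\sum_v x_v$, at generic (or polytopal) $p$, which is cited from the literature. Everything else is linear algebra. The step needing the most care is Step 2's reduction from orthogonality to the single degree-$k$ component $J_k$ to the vanishing of $\partial_\phi\lambda$ for every generator.
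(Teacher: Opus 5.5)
Your proposal is correct and follows essentially the same route as the paper. The paper gives no separate proof: it identifies $\Stress^a_k(\Delta,p;\F)$ with the degree-$k$ part of the Macaulay inverse system of $I_\Delta+(\Theta(p),c)$, deferring details to Murai--Novik--Zheng, and then reads off $g_k$ from the Hard Lefschetz theorem. Your Steps 1--3 make the apolarity identification and the base change from $\F$ to $\R$ explicit. One small addition: for a natural embedding, you should first translate $P$ so the origin is interior before asserting that $\Theta$ is an l.s.o.p. This is harmless, since translation does not change the ideal $(\Theta,c)$.
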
	
	
	If $\Lambda$ is a full-dimensional subcomplex of $\Delta$, then $\Stress^a_k(\Lambda,p)\subseteq \Stress^a_k(\Delta, p)$. In other words, any affine stress $\omega\in \Stress^a_k(\Lambda,p)$ is also an affine stress on $(\Delta,p)$ whose support is contained in $\Lambda$. Throughout the paper, we frequently apply this observation tacitly in the case $\Lambda=\st(\tau)$ for $\tau\in \Delta$.
	
	\subsection{The cone lemma and its applications}
	
Many proofs in this paper rely on the following variant of the cone lemma from \cite[Lemmas 3.1 and 3.2]{NZ-g_i}. The proof is identical to that in \cite{NZ-g_i}, and so we omit it.
	\begin{lemma}\label{cone lemma}
		Let $\Delta$ be a $(d-2)$-dimensional simplicial complex with $V(\Delta)=[n]$, and let $\Gamma=0*\Delta$. Let $\{a_{u, j}, b_u: u\in[n], j\in [d-1]\}\subset \R$ be a set of numbers that are algebraically independent over $\Q$. Define $\F'=\Q\big(\frac{a_{u,j}}{b_{u}} : u\in [n], j\in [d-1]\big)$ and $\F=\Q(a_{u,j}, b_u: u\in [n],j\in[d-1])=F'(b_1,\ldots, b_n)$. Embed $\Delta$ and $\Gamma$ in $(\F')^{d-1}$ and $\F^d$, respectively, using maps $p'$ and $p$ defined by $$p'(u)=\left(\frac{a_{u,1}}{b_{u}},\dots,\frac{a_{u,d-1}}{b_{u}}\right), \quad p(u)=(a_{u,1},...,a_{u,d-1}, b_{u}) \mbox{ for all }u\in [n],$$ and set $p(0)=(0,\ldots,0)$. Then every $\omega'\in \Stress^a_i(\Delta,p'; \F')$ lifts to an element $\omega\in\Stress^a_i(\Gamma,p;\F)$. Moreover,  $\supp(\omega)=\skel_{i-1}(0*\supp(\omega')).$
	\end{lemma}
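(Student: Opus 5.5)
The plan is to construct the lift explicitly by a change of variables, and then read off the support from a genericity argument. Write $\theta'_j=\sum_u \frac{a_{u,j}}{b_u}x_u$ for the l.s.o.p. of $(\Delta,p')$, and $\theta_1,\dots,\theta_d$ for that of $(\Gamma,p)$. Since $p(0)=0$, the forms $\theta_j$ do not involve $x_0$: $\theta_j=\sum_u a_{u,j}x_u$ for $j\le d-1$, and $\theta_d=\sum_u b_u x_u$. Let $c'=\sum_{u\in[n]}x_u$, so that $c=c'+x_0$.

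\textbf{Step 1 (rescaling).} Given $\omega'\in\Stress^a_i(\Delta,p';\F')$, set $\tilde\omega(x_1,\dots,x_n)=\omega'(x_1/b_1,\dots,x_n/b_n)$. By the chain rule, $\partial_{x_u}\tilde\omega=b_u^{-1}(\partial_{x_u}\omega')(x/b)$. Hence
\[
\partial_{\theta_j}\tilde\omega=(\partial_{\theta'_j}\omega')(x/b)=0 \quad (j\le d-1),
\qquad
\partial_{\theta_d}\tilde\omega=(\partial_{c'}\omega')(x/b)=0 .
\]
So the affine condition on $\omega'$ becomes the $d$-th linear condition for $\Gamma$. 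Also, $\tilde\omega$ has the same monomial support as $\omega'$.

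\textbf{Step 2 (translation).} Define $\omega(x_0,x_1,\dots,x_n)=\tilde\omega(x_1-x_0,\dots,x_n-x_0)$, equivalently $\omega=\sum_{k\ge0}\frac{(-x_0)^k}{k!}\,\partial_{c'}^k\tilde\omega$.
\begin{itemize}
\item Any function of the differences $x_u-x_0$ is killed by $\sum_u\partial_{x_u}+\partial_{x_0}=\partial_c$.
\item Each $\partial_{\theta_j}$ has no $x_0$-component and commutes with $\partial_{c'}$, so it kills every summand $\partial_{c'}^k\tilde\omega$.
\item Every monomial of $\omega$ has the form $x_0^k\nu$ with $\nu$ dividing some monomial $\mu$ of $\tilde\omega$. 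Since $\supp\mu\in\Delta$, we get $\supp(x_0^k\nu)\in 0*\Delta=\Gamma$.
\end{itemize}
Thus $\omega\in\Stress^a_i(\Gamma,p;\F)$. This already gives the inclusion $\supp(\omega)\subseteq\skel_{i-1}(0*\supp(\omega'))$: a monomial of degree $i$ has at most $i$ variables, and every face in its support lies in $0*\supp(\omega')$.

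\textbf{Step 3 (reverse inclusion, the main obstacle).} The difficulty is that the coefficients of a fixed monomial $x_0^k\nu$ in $\omega$ are sums over many monomials of $\tilde\omega$ and could a priori cancel.

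Let $\tau\in\supp(\omega')$ with $|\tau|\le i$. The face $\tau$ itself participates in $\omega$ through the $k=0$ summand $\tilde\omega$. It remains to show that $\tau\cup\{0\}$ participates whenever $|\tau|\le i-1$.

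Put $m=i-|\tau|\ge1$ and $\eta=\partial_{x_\tau}\tilde\omega$, a nonzero homogeneous polynomial of degree $m$. Nonvanishing holds because $\tau$ participates, so some monomial divisible by $x_\tau$ survives differentiation. Now take $k=m$. The summand $\frac{(-x_0)^m}{m!}\partial_{c'}^m\tilde\omega$ has degree $0$ in the $x_u$, so it equals $\frac{(-x_0)^m}{m!}\partial_{c'}^m\tilde\omega$, a scalar multiple of $x_0^m$. More to the point, the coefficient in $\omega$ of $x_0^{m}x_\tau$ is $\pm\frac1{m!}$ times the constant $\partial_{c'}^m\eta$. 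This constant equals $m!\,\eta(1,\dots,1)$, which is, up to a nonzero factor, $(\partial_{x_\tau}\omega')(1/b_1,\dots,1/b_n)$.

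Here $\partial_{x_\tau}\omega'$ is a nonzero polynomial with coefficients in $\F'$, and the $b_u$ are algebraically independent over $\F'$. Indeed, $\{a_{u,j}/b_u\}\cup\{b_u\}$ is algebraically independent, since it generates the same field as the original independent set. So the evaluation is nonzero, and $x_0^m x_\tau$ appears in $\omega$ with nonzero coefficient. Hence $\tau\cup\{0\}$ participates in $\omega$, which gives the reverse inclusion.

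I expect this transcendence bookkeeping to be the only delicate point: verifying that $\F'$ and the $b_u$ are jointly independent, and that the coefficient of $x_0^m x_\tau$ is exactly the evaluation above. Everything else is formal.
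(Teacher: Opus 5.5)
Your proof is correct, and it is the natural coning argument behind the proof the paper omits (the paper defers to Lemmas 3.1 and 3.2 of its earlier work). Rescaling by $1/b_u$ turns the affine condition $\partial_{c'}\omega'=0$ into the $\theta_d$-condition, the substitution $x_u\mapsto x_u-x_0$ makes the lift affine on the cone, and algebraic independence of the $b_u$ over $\F'$ rules out cancellation in the coefficient of $x_0^{m}x_\tau$. One small slip: the summand $\frac{(-x_0)^m}{m!}\partial_{c'}^m\tilde\omega$ has degree $|\tau|$ (not $0$) in the $x_u$---it is $\partial_{c'}^m\eta$ that is a constant---but your computation of the coefficient as $(-1)^m\eta(1,\dots,1)$ is right.
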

	
	In particular, we will frequently use the following corollary.
	\begin{corollary}\label{main cor}
	Let $\Sigma$ be a $(d-1)$-sphere, let $p$ be a generic embedding of $\Sigma$ in $\R^d$, let $1\leq k<d$, and let $i\leq\frac{d-k}{2}$. Then for every $(k-1)$-face $\tau$ of $\Sigma$ with $g_i(\lk(\tau))\geq 1$, there exists $\omega \in \Stress^a_i(\st(\tau), p;\F)$ such that $\supp(\omega)\supseteq \skel_{i-1}(\overline{\tau})$. Here $\F\subseteq \R$ is any field containing $\Q(p(v)_j : v\in V(\Delta), j\in [d])$.
	\end{corollary}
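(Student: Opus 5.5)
The plan is to reduce to the cone lemma applied to the link of $\tau$ and then transport the resulting stress back to $\st(\tau)$. Write $\tau=\{t_1,\dots,t_k\}$, so $\st(\tau)=\overline{\tau}*\lk(\tau)$, where $\lk(\tau)$ is a $(d-k-1)$-sphere, and $g_i(\lk(\tau))\ge 1$ with $i\le (d-k)/2$. By Theorem~\ref{thm: Lefschetz}, applied to $\lk(\tau)$ with a generic embedding in $\R^{d-k}$, there is a nonzero affine $i$-stress $\omega'$ on $\lk(\tau)$. Since a nonzero affine stress on a sphere must involve some monomial, its support contains at least one $(i-1)$-face.

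The key step is to build the stress on $\st(\tau)$ one cone point at a time using Lemma~\ref{cone lemma}. I would proceed by induction on $k$. Start with $\Delta_0=\lk(\tau)$, and at step $j$ let $\Delta_j=t_j*\Delta_{j-1}$, a complex of dimension $d-k+j-1$. At each step, place $t_j$ at the origin and lift the coordinates by a new generic last coordinate $b_u$, as in the lemma. Lemma~\ref{cone lemma} then lifts $\omega_{j-1}$ to $\omega_j$ with $\supp(\omega_j)=\skel_{i-1}(t_j*\supp(\omega_{j-1}))$. After $k$ steps the support is $\skel_{i-1}(\overline{\tau}*\supp(\omega'))$, which contains $\skel_{i-1}(\overline{\tau})$, because $\supp(\omega')$ is nonempty (it contains at least the empty face and a vertex).

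Two technical issues arise at each step.
\begin{itemize}
\item \emph{Affine invariance.} Translating $t_j$ to the origin is harmless, because affine stress spaces are invariant under affine (indeed projective, after rescaling) changes of coordinates. Concretely, replacing $\theta_1,\dots,\theta_d$ by linear combinations of $\theta_1,\dots,\theta_d,c$ does not change the conditions $\partial_{\theta_\ell}\lambda=0$, $\partial_c\lambda=0$.
\item \emph{Genericity.} The lemma requires the particular "projective" form of the embedding, namely $p'(u)=a_u/b_u$. Because the $a_{u,j},b_u$ are algebraically independent, the induced coordinates $a_{u,j}/b_u$ are again algebraically independent. So one can choose the generic embedding of $\Sigma$ so that, restricted to $\st(\tau)$ and read back through the $k$ central projections from $t_k,\dots,t_1$, it becomes a generic embedding of $\lk(\tau)$.
\end{itemize}

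Equivalently, and more simply: since $\dim$ and existence statements about stresses with a prescribed support are determined by the vanishing of certain polynomials in the coordinates, it suffices to produce one embedding in which the conclusion holds. The statement for every generic $p$ (and over any field $\F$ containing the coordinates) then follows by algebraic independence, because the existence of such a stress is a Zariski-open condition on the coordinates. Finally, $\omega\in\Stress^a_i(\st(\tau),p;\F)$, since $\st(\tau)$ is full-dimensional in $\Sigma$.

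The main obstacle I expect is this genericity bookkeeping. One must check that the composite of the projections yields an embedding of $\lk(\tau)$ to which Theorem~\ref{thm: Lefschetz} applies, and that "nonzero stress with support containing $\skel_{i-1}(\overline{\tau})$" is indeed an open condition. I would handle the latter by fixing one monomial supported on an $(i-1)$-face $\sigma\subseteq\overline{\tau}$ and using Cramer's rule: a nonzero coefficient in a generic solution of a linear system is a nonvanishing rational function of the coordinates. Since there are finitely many such faces, and a generic linear combination of stresses witnessing each face witnesses all of them simultaneously, the condition holds for all of $\skel_{i-1}(\overline{\tau})$ at once.
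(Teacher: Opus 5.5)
Your proposal is correct and is essentially the paper's argument. The paper inducts on $k$: it centrally projects the given generic $p$ from a vertex $v\in\tau$ to get a generic embedding of $\lk(v)$ in $\R^{d-1}$, applies the induction hypothesis to $\tau\setminus v$ there, and lifts back to $\st(v)$ via the cone lemma and translation invariance. Unrolled, this is exactly your $k$-fold coning from a nonzero stress on $\lk(\tau)$.

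The one thing to drop is the ``Zariski-open'' shortcut. Existence of a stress with a prescribed nonzero coefficient is not an open condition in general, because the stress space can jump up at special embeddings. What you actually need, and what the top-down projection argument you also describe supplies, is that the embedding used at each level is itself generic.
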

	\begin{proof}
	Throughout the proof, we set $a_{u,j}:=p(u)_j$ for $u\in V(\Sigma), j\in [d]$.

	The proof is by induction on $k$. When $k=1$, $\tau=v$ is a vertex. Consider $$\F'=\Q\left(\frac{a_{u,j}-a_{v,j}}{a_{u,d}-a_{v,d}} : u\in V(\lk(v), j\in[d-1])\right),$$ and define maps $q': V(\lk (v))\to (\F')^{d-1}$ and $q:V(\st (v))\to \F^d$  by 
		\begin{equation} \label{eq:q'-and-q}
			q'(u)=\left(\frac{a_{u,1}-a_{v,1}}{a_{u,d}-a_{v,d}}, \dots, \frac{a_{u,d-1}-a_{v,d-1}}{a_{u,d}-a_{v,d}}\right) \ \forall u\in V(\lk(v)), \;
			q(u)=p(u)-p(v) \; \forall u\in V(\st(v)).
		\end{equation}
		
		Since the set $\{a_{u,i}: u\in \st(v), i\in [d]\}$ is algebraically independent, so is $\{a_{u,i}-a_{v,i} : u\in \lk(v), i\in [d]\}$. 
		Because $g_i(\lk (v)))\geq 1$, there exists a nonzero $\omega' \in\Stress^a_i(\lk (v), q';\F')$.
		By Lemma \ref{cone lemma}, the stress $\omega'$ lifts to a stress $\omega \in \Stress^a_i(\st(v), q;\F)$ with $v$ in its support. 
	Since  translations do not affect the space of affine stresses, we have $\Stress^a_i(\st(v),q;\F)=\Stress^a_i(\st(v),p;\F)$, and the result follows.

	Now assume $k>1$ and $\tau$ is a $(k-1)$-face with $g_i(\lk(\tau))\geq 1$. Let $v$ be a vertex of $\tau$, and set $\tau'=\tau\backslash v$. Then $\lk(v)$ is a $(d-2)$-sphere, $\tau'$ is a face of $\lk(v)$, and $|\tau'|=k-1$. Moreover, $\lk(\tau',\lk(v))=\lk(\tau)$, so $g_i(\lk(\tau',\lk(v)))\geq 1$. Applying the inductive hypothesis  to the face $\tau'$ in the sphere $(\lk(v), q')$, with  $q'$ defined by equation \eqref{eq:q'-and-q}, we obtain a stress $\omega'\in \Stress_i^a(\lk(v), q')$ whose support is contained in $\st(\tau', \lk (v))$  and  contains $\skel_{i-1}(\overline{\tau'})$. Since $\st(\tau,\st(v))=v*\st(\tau',\lk(v))$, Lemma \ref{cone lemma} implies that $\omega'$ lifts to a stress $\omega\in \Stress^a_i(\st(v),q)$ whose support is contained in $\st(\tau)$ and contains $\skel_{i-1}(v*\overline{\tau'})=\skel_{i-1}(\overline{\tau})$. This completes the proof.
	\end{proof}
	
	\begin{remark} \label{rm:sphere->normal-ps}
	The only properties of spheres used in the proof of Corollary \ref{main cor} are: (i) the link of a $(k-1)$-face of a homology $(d-1)$-sphere $\Delta$ is a homology $(d-k-1)$-sphere, and (ii) under a generic embedding $p'$ of $\Sigma$ in $\R^{d'}$,  the dimension of the space $\Stress^a_j(\Delta, p')$ is $g_j(\Delta)$ for all $j\leq d/2$. 

	If $\Delta$ is a normal pseudomanifold, then the link of any face of $\Delta$ is again a normal pseudomanifold. By \cite{Fogelsanger88} and \cite{Tay}, property (ii) continues to hold when $j=2$ and $\Delta$ is a normal pseudomanifold of dimension $\geq 4$. Therefore, Corollary \ref{main cor} also holds when $i=2$ and $\Sigma$ is a normal pseudomanifold. We will use this fact in the proof of Theorem \ref{thm: flag lower bound II}.
	\end{remark}

	We close this section with another application of Lemma \ref{cone lemma}, which will be used in Section 4 and further generalized in Section 5. Recall that the independence number of the graph of a simplicial complex $\Delta$ is denoted by $\alpha(\Delta)$.
	
	\begin{proposition}\label{thm: flag lower bounds-revised}
	Let $i\leq \frac{d-1}{2}$ and $j\geq i+1$, and let $\Delta\in S(d-j, d-1)$. Then $g_i(\Delta)\geq \alpha(\Delta)$. In particular, if $\Delta\in S(\lfloor \frac{d}{2}\rfloor, d-1)$, then $g_i(\Delta)\geq \alpha(\Delta)$ for all $i\leq \frac{d-1}{2}$.
	\end{proposition}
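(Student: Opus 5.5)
Let $I=\{v_1,\dots,v_m\}$ be an independent set in the graph of $\Delta$ with $m=\alpha(\Delta)$. For each $v\in I$ I will build an affine $i$-stress $\omega_v\in\Stress^a_i(\st(v),p;\F)$ under a generic embedding $p$, whose support contains $v$. Then I will show the $\omega_v$ are linearly independent in $\Stress^a_i(\Delta,p;\F)$. Since $i\le \frac{d-1}{2}\le\lfloor d/2\rfloor$, Theorem \ref{thm: Lefschetz} gives $\dim\Stress^a_i(\Delta,p;\F)=g_i(\Delta)$, and the inequality $g_i(\Delta)\ge \alpha(\Delta)$ follows.

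\textbf{Constructing the stresses.} Each vertex link $\lk(v)$ is a $(d-2)$-sphere. Because $\Delta\in S(d-j,d-1)$, the link lies in $S(d-j,d-2)$, so all its missing faces have dimension at most $d-j\le (d-1)-i-1$; in other words, $\lk(v)\in S((d-1)-(i+1),\,d-2)$. Write $d'=d-1$. We have $i\le d'/2$, so $i\le\lfloor d'/2\rfloor$ because $i$ is an integer. We also have $\lk(v)\in S(d'-(i+1),d'-1)\subseteq S(d'-i,d'-1)$. Corollary \ref{S(d-i, d-1) implies g_i>0} therefore gives $g_i(\lk(v))\ge 1$. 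Now Corollary \ref{main cor} with $k=1$ and $\tau=v$ applies, since $i\le\frac{d-1}{2}$. It yields $\omega_v\in\Stress^a_i(\st(v),p;\F)$ with $v\in\supp(\omega_v)$. In particular, some monomial $\mu$ with $x_v\mid\mu$ and $\supp(\mu)\in\st(v)$ has a nonzero coefficient in $\omega_v$.

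\textbf{Independence.} Suppose $\sum_{v\in I}c_v\omega_v=0$ and fix $u\in I$. Every monomial of $\omega_w$ is supported on a face of $\st(w)$. If such a monomial involves $x_u$ with $u\ne w$, then $u$ is a vertex of $\st(w)$ different from $w$, hence a neighbor of $w$. This is impossible because $I$ is independent. So for $w\neq u$, no monomial of $\omega_w$ involves $x_u$. Restricting the relation to the monomials divisible by $x_u$ therefore leaves only $c_u$ times the nonzero $x_u$-part of $\omega_u$. Hence $c_u=0$.

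The main point needing care is the index bookkeeping when applying Corollary \ref{S(d-i, d-1) implies g_i>0} to the link. We need $i\le\lfloor (d-1)/2\rfloor$, which holds by hypothesis. We also need the link's missing faces to have dimension at most $(d-1)-i$, which follows from $j\ge i+1$; this is where that hypothesis is used. For the \emph{in particular} clause, set $j=\lceil d/2\rceil$, so that $d-j=\lfloor d/2\rfloor$. For any $i\le\frac{d-1}{2}$ we have $i\le\lfloor\frac{d-1}{2}\rfloor=\lceil d/2\rceil-1=j-1$, so $j\ge i+1$ and the first part applies.
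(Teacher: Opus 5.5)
Your proof is correct and is essentially the paper's argument. You get $g_i(\lk(v))\geq 1$ from Corollary \ref{S(d-i, d-1) implies g_i>0}, lift it to a stress on $\st(v)$ containing $v$ via Corollary \ref{main cor} with $k=1$, and obtain linear independence because no $u\in I$ lies in $\st(w)$ for $w\in I\setminus\{u\}$.

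There is one small arithmetic slip. The hypothesis $j\geq i+1$ gives $d-j\leq (d-1)-i$, not $(d-1)-i-1$. So the intermediate claim $\lk(v)\in S((d-1)-(i+1),d-2)$ is unjustified when $j=i+1$. This does no harm, because you only use the weaker membership $\lk(v)\in S((d-1)-i,d-2)$, which is exactly what $j\geq i+1$ yields, as your final paragraph correctly states.
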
 
	
	This result and its proof are motivated by the following recent result of Chudnovsky and Nevo~\cite{Chudnovsky-Nevo}:
	\begin{theorem}\label{thm: flag lower bounds}
		Let $d\geq 5$ and let $\Delta$ be a $(d-1)$-sphere. 
		 If every vertex link of $\Delta$ satisfies $g_2\geq 1$, then $g_2(\Delta)\geq \alpha(\Delta)$.
		In particular, for $d\geq 5$ and $\Delta\in S(1, d-1)$,  $g_2(\Delta)\geq \frac{1-\delta(d)}{2d+1}f_0(\Delta)$, where $\delta(d)\to 0$ as $d\to \infty$.
	\end{theorem}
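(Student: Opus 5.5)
The plan is to build, for each vertex of a maximum independent set, an affine $2$-stress localized near that vertex. Once these stresses are shown to be linearly independent, Theorem~\ref{thm: Lefschetz} turns the count into a lower bound on $g_2$. The flag bound then follows from the first part together with Tur\'an's theorem.

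\emph{Constructing the stresses.} Fix a generic embedding $p$ of $\Delta$ in $\R^d$ and let $I$ be an independent set of the graph of $\Delta$ with $|I|=\alpha(\Delta)$. Since $d\geq 5$ we have $2\leq (d-1)/2$, so Corollary~\ref{main cor} applies with $k=1$ and $i=2$. For each $v\in I$ the hypothesis $g_2(\lk(v))\geq 1$ then gives a stress $\omega_v\in\Stress^a_2(\st(v),p;\F)\subseteq \Stress^a_2(\Delta,p;\F)$ whose support contains $v$. In other words, $\omega_v$ has a nonzero monomial divisible by $x_v$.

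\emph{Linear independence.} Suppose $\sum_{v\in I} c_v\omega_v=0$ and fix $u\in I$. Every monomial of $\omega_w$ is supported on a face of $\st(w)$. If $x_u$ divides such a monomial, then $u\in V(\st(w))$, so either $u=w$ or $uw$ is an edge. Since $I$ is independent, only $\omega_u$ contributes monomials divisible by $x_u$. Comparing such a monomial in the relation forces $c_u=0$. Hence $\{\omega_v\}_{v\in I}$ is linearly independent, and Theorem~\ref{thm: Lefschetz} yields $g_2(\Delta)=\dim\Stress^a_2(\Delta)\geq \alpha(\Delta)$. I expect the only delicate point to be the support statement from Corollary~\ref{main cor}, which requires $v$ itself to participate in $\omega_v$. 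That corollary provides it directly, so no real obstacle remains.

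\emph{The flag case.} Let $\Delta\in S(1,d-1)$ with $d\geq 5$. Each vertex link is a flag $(d-2)$-sphere, so it lies in $S(1,d-2)\subseteq S((d-1)-2,d-2)$ because $d-3\geq 1$. Corollary~\ref{S(d-i, d-1) implies g_i>0} then gives $g_2(\lk(v))\geq 1$, so the first part applies.

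Write $g_2=xf_0$. Since $f_1=g_2+df_0-\binom{d+1}{2}$, we get $2f_1/f_0+1\leq 2x+2d+1$. Tur\'an's theorem then gives
\[
xf_0 = g_2 \geq \alpha \geq \frac{f_0}{2x+2d+1},
\]
that is, $x(2x+2d+1)\geq 1$. Solving the quadratic,
\[
x\geq \frac{-(2d+1)+\sqrt{(2d+1)^2+8}}{4}=\frac{2}{(2d+1)+\sqrt{(2d+1)^2+8}}=\frac{1-\delta(d)}{2d+1},
\]
where $\delta(d)\to 0$ as $d\to\infty$.
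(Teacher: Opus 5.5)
Your proof is correct and follows the argument the paper itself uses. The paper cites this theorem from Chudnovsky and Nevo rather than proving it, but its proof of Proposition~\ref{thm: flag lower bounds-revised} is exactly your construction: apply Corollary~\ref{main cor} at each vertex $v$ of a maximum independent set $I$, and get linear independence because no other vertex of $I$ lies in $\st(v)$. Likewise, your Tur\'an computation for the flag case is the one carried out in the proof of Theorem~\ref{thm: flag lower bound II}; dividing by $2x+2d+1$ and taking the positive root is justified since $x=g_2/f_0\geq \alpha/f_0>0$.
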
 
	
	\begin{remark} Although \cite{Chudnovsky-Nevo} states the inequality $g_2(\Delta)\geq |I|$ for flag spheres, the proof there relies only on the weaker assumption that every vertex link satisfies $g_2\geq 1$. Consequently, the lower bound on $g_2(\Delta)$ given in the theorem continues to hold for spheres in $S(d-3,d-1)$ and, similarly to Remark \ref{rm:sphere->normal-ps}, even for normal $(d-1)$ pseufomanifolds whose missing faces all have dimension $\leq d-3$. (Here $d\geq 5$.)
	\end{remark}

	 \smallskip\noindent {\it Proof of Proposition \ref{thm: flag lower bounds-revised}: }
	 Consider a generic embedding $p$ of $\Delta$. let $I$ be a maximum independent set in the graph of $\Delta$; that is, $|I|=\alpha(\Delta)$. Since $\Delta\in S(d-j, d-1)$,  all vertex links of $\Delta$ lie in $S(d-j,d-2)$. Because $j\geq i+1$ and $i\leq (d-1)/2$, it follows from Corollary \ref{S(d-i, d-1) implies g_i>0} that $g_i(\lk(v))\ge 1$ for every vertex $v\in \Delta$. Therefore, by Corollary \ref{main cor}, for each $v\in I$ there exists a stress $\omega_v\in \Stress^a_i(\st(v),p) \subseteq \Stress^a_i(\Delta,p)$ whose support contains $v$.
	
	Now let $u,v\in I$ with $u\neq v$. Since $I$ is an independent set, $v\notin \st(u)$, and hence $v$ does not appear in the support of $\omega_u$. Thus each $\omega_v$ has a vertex in its support that does not appear in the support of any other $\omega_u$ with $u\in I$, $u\neq v$. Consequently, the collection $\{\omega_v : v\in I\}$ is linearly independent. Therefore, $g_i=\dim \Stress^a_i(\Delta)\geq |I|=\alpha(\Delta)$. This completes the proof. \hfill$\square$\medskip

	\section{The $g_2$-numbers of spheres in $S(2, 4)$}
	With Lemma \ref{McMullen's formula} and Proposition \ref{thm: flag lower bounds-revised} at our disposal, we are now ready to investigate lower bounds on $g_2$ for spheres in $S(2,4)$. For this discussion, recall that $K(2,4)=\partial\sigma^2*\partial\sigma^2*\partial \sigma^1$ is a sphere in $S(2,4)$. The significance of $K(2,4)$ is that any $\Delta\in S(2,4)$ satisfies $f_0(\Delta)\geq f_0(K(2,4))=8$ \cite[Theorem 3.1]{GoffKleeN};\footnote{In fact, within the class $S(2,4)$, the sphere $K(2,4)$ simultaneously minimizes all face numbers.} moreover,  $K(2,4)$ is the unique $8$-vertex element of $S(2,4)$. 
	
	Also, as mentioned in Section 2.4 (see Corollary \ref{cor: S(k, 2k)}), a sphere $\Delta\in S(2,4)$ satisfies $g_2(\Delta)\geq f_0(\Delta)/4$, while it is conjectured that the much stronger inequality $g_2(\Delta)\geq g_1(\Delta)=f_0(\Delta)-6$ should hold. Meanwhile, since $f_0(\Delta)\geq 8$, for any $\lambda$ with $\frac{1}{4}\leq  \lambda\leq 1$  we have $$\frac{1}{4}f_0(\Delta)\leq  \lambda f_0(\Delta)- (8\lambda-2) \leq f_0(\Delta)-6,$$ and all these inequalities hold as equalities for $\Delta=K(2,4)$. 
	
	The goal of this section is to determine the optimal value of $\lambda$ for which we can prove that  $g_2\geq \lambda f_0- (8\lambda-2)$  for all spheres in $S(2,4)$. We will see in Theorem \ref{thm: S(2,4)} that $\lambda=2/5$ works.
	
	\subsection{Reduction}  \label{sec:reduction}
	
	Given a simplicial complex $\Delta$ and an edge $uv$ that is not contained in any missing face (equivalently, $\lk(uv)=\lk(u)\cap\lk(v)$), one can perform an operation that contracts $uv$. This operation replaces the vertices $u$ and $v$ with a new vertex $u'$, and replaces every face $F$ that intersects $\{u,v\}$ with $(F\backslash\{u,v\})\cup\{u'\}$. The resulting complex $\Delta'$ is still a simplicial complex; moreover, if $\Delta$ is a homology $(d-1)$-sphere, then so is $\Delta'$; see \cite[Proposition 2.3]{NevoNovinsky}. For a sphere in $\Delta \in S(2,4)$ and an edge $e$, we say that contracting $e$ is an {\em admissible contraction} if performing it yields a complex $\Delta'$ that still belongs to $S(2,4)$.
	
	In what follows, we consider a {\em reduced} sphere in $S(2,4)$. We say that $\Delta\in S(2,4)$ is reduced if the following conditions hold:
	\begin{enumerate}
		\item $\Delta$ admits no admissible edge contractions. 
		\item There is no induced subcomplex $\Gamma \cong \partial \sigma^2*\partial \sigma^2\subseteq \Delta$, such that each connected component of $\Delta\backslash\Gamma$ contains at least two vertices. (Here $\Delta\backslash \Gamma$ denotes the induced subcomplex of $\Delta$ on the vertex set complementary to that of $\Gamma$.) 
	\end{enumerate}
	
	To see that it suffices to consider reduced complexes, suppose that $\Delta'\in S(2,4)$ is obtained from $\Delta$ by an admissible contraction of an edge $e$. Then $g_2(\Delta')=g_2(\Delta)-g_1(\lk(e))$. Since $\lk(e)\in S(2, 2)$, we have $g_1(\lk(e))\geq 1$. Recall also that $\lambda\leq 1$. Thus, if  $g_2(\Delta')\geq \lambda f_0(\Delta')-(8\lambda-2)$, then $g_2(\Delta)\geq \lambda f_0(\Delta')-(8\lambda-2)+g_1(\lk(e))\geq \lambda f_0(\Delta)-(8\lambda -2)$, since $f_0(\Delta)=f_0(\Delta')+1$ and $g_1(\lk(e))\geq 1\geq \lambda$. In other words, in this case, $\Delta$ also satisfies $g_2(\Delta)\geq \lambda f_0(\Delta)-(8\lambda-2)$.
	
	Similarly, suppose that $\Delta$ violates condition 2. Let $\Delta'_1$ and $\Delta'_2$ be the two $4$-balls such that $\partial \Delta'_1=\partial \Delta'_2=\Gamma$ and $\Delta'_1 \cup_\Gamma \Delta'_2 =\Delta$. For $i=1,2$, define $\Delta_i=\Delta'_i\cup (\Gamma*v_i)$, where $v_1$ and $v_2$ are new vertices.  Then $\Delta_i\in S(2, 4)$ and $f_0(\Delta_i)<f_0(\Delta)$ for each $i$. Noting that $\Sigma\Gamma=K(2,4)$, and hence $g_2(\Sigma\Gamma)=2=\lambda f_0(\Sigma\Gamma)-(8\lambda-2)$, we conclude that if $g_2(\Delta_i)\geq \lambda f_0(\Delta_i)-(8\lambda -2)$, then 
	\begin{eqnarray*}g_2(\Delta)&=&g_2(\Delta_1)+g_2(\Delta_2)-g_2(\Sigma\Gamma) \\
	&\geq& \left( \sum_{i=1,2}(\lambda f_0(\Delta_i)-(8\lambda-2))\right) -(\lambda f_0(\Sigma\Gamma)-(8\lambda-2))=\lambda f_0(\Delta)-(8\lambda-2).
	\end{eqnarray*}
	Thus, once again $\Delta$  satisfies $g_2(\Delta)\geq \lambda f_0(\Delta)-(8\lambda-2)$.

	This discussion implies the following result:

	\begin{proposition} \label{Prop:reduction}
	If, for some constant $1/4 \leq \lambda \leq 1$, all reduced spheres in $S(2,4)$ satisfy $g_2\geq \lambda f_0-(8\lambda -2)$, then all spheres in $S(2,4)$ satisfy this inequality.
\end{proposition}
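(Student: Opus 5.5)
The plan is strong induction on $f_0(\Delta)$, fixing $\lambda$ with $1/4\leq\lambda\leq 1$ and assuming every reduced sphere in $S(2,4)$ satisfies $g_2\geq \lambda f_0-(8\lambda-2)$. The base is $f_0(\Delta)=8$. There $\Delta=K(2,4)$ by the uniqueness result of \cite{GoffKleeN}. Since $g_2(K(2,4))=2=\lambda\cdot 8-(8\lambda-2)$, the base holds with equality for every $\lambda$, regardless of whether $K(2,4)$ is reduced.

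For the inductive step, take $\Delta\in S(2,4)$ with $f_0(\Delta)>8$, and assume the inequality for all spheres in $S(2,4)$ with fewer vertices. There are three cases.

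\emph{Reduced case.} If $\Delta$ is reduced, the inequality holds by hypothesis.

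\emph{Admissible contraction.} Suppose $\Delta$ has an admissible edge contraction of $e=uv$, producing $\Delta'\in S(2,4)$ with $f_0(\Delta')=f_0(\Delta)-1$.
\begin{itemize}
\item First justify $g_2(\Delta')=g_2(\Delta)-g_1(\lk(e))$. Counting faces, a contraction of an edge not in a missing face removes one vertex. It also removes $f_0(\lk(e))+1$ edges, since the edges $uw,vw$ for $w\in\lk(e)$ merge and $uv$ disappears. Hence $\Delta f_1-5\Delta f_0=-(f_0(\lk e)+1)+5=-(f_0(\lk(e))-4)=-g_1(\lk(e))$, because $\lk(e)$ is a $2$-sphere.
\item Since $\lk(e)\in S(2,2)$ is not the boundary of a $3$-simplex (that would be a missing $3$-face of $\Delta$ after including $e$, contradicting membership in $S(2,4)$), it has at least $5$ vertices. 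Thus $g_1(\lk(e))\geq 1\geq\lambda$.
\item Apply the induction hypothesis to $\Delta'$ and conclude as in the computation preceding the proposition.
\end{itemize}

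\emph{Splitting along $\Gamma\cong\partial\sigma^2*\partial\sigma^2$.} Suppose $\Delta$ violates condition 2, with induced $\Gamma$. Since $\Delta$ is a homology $4$-sphere and $\Gamma$ is an induced homology $3$-sphere, Alexander duality gives that $\Gamma$ separates $\Delta$ into exactly two sides. The condition that each component of $\Delta\setminus\Gamma$ has at least two vertices makes each side contain at least two vertices. Forming $\Delta_i=\Delta_i'\cup(\Gamma*v_i)$ therefore gives $f_0(\Delta_i)\leq f_0(\Delta)-1$.

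Next check that $\Delta_i\in S(2,4)$. The new missing faces could involve $v_i$, but any set $v_i\cup\tau$ with all proper subsets present has $\tau\subseteq V(\Gamma)$ with $\tau\notin\Gamma$. Because $\Gamma$ is induced, such $\tau$ is a missing face of $\Gamma$, so it is a triangle's vertex set of size $3$. Then $v_i\cup\tau$ is not missing, since $\tau$ itself is missing. Any other missing face lies in $\Delta_i'$ and hence in $\Delta$. So all missing faces have dimension $\leq 2$.

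Also verify the additivity $g_2(\Delta)=g_2(\Delta_1)+g_2(\Delta_2)-g_2(\Sigma\Gamma)$. Face numbers are additive under gluing along $\Gamma$: $f(\Delta)+f(\Sigma\Gamma)=f(\Delta_1)+f(\Delta_2)$ in each dimension, and $g_2$ is affine in $(f_0,f_1)$ with the constant term cancelling. Then apply the induction hypothesis to each $\Delta_i$ and conclude via the displayed computation.

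The main obstacle is the bookkeeping in the splitting case, specifically showing that $\Delta_i$ is again in $S(2,4)$ with strictly fewer vertices. The key input is the inducedness of $\Gamma$. The sphere property of $\Delta_i$ follows because $\Delta_i'$ is a homology ball with boundary $\Gamma$, by the Jordan–Alexander separation for homology spheres.
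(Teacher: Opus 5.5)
Your proposal is correct and follows essentially the same route as the paper: you induct on $f_0$, reducing either by an admissible edge contraction (using $g_2(\Delta')=g_2(\Delta)-g_1(\lk(e))$ and $g_1(\lk(e))\geq 1\geq\lambda$) or by splitting along an induced $\partial\sigma^2*\partial\sigma^2$ (using $g_2(\Delta)=g_2(\Delta_1)+g_2(\Delta_2)-g_2(\Sigma\Gamma)$, with equality in the bound for $\Sigma\Gamma=K(2,4)$). The only difference is that you state explicitly the induction, the base case $K(2,4)$, the face-count identity, and the check that each $\Delta_i$ lies in $S(2,4)$, all of which the paper leaves implicit.
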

	
	The best scenario we can now hope for is the following: a reduced $\Delta\in S(2, 4)$ is either $K(2,4)$, or it admits no admissible edge contraction and, consequently, contains many missing $2$-faces. In the former case, we have $g_2=g_1$. In the latter case, one expects  $g_2$ to be strictly larger than the proposed lower bound. This situation may be compared with the behavior of the $\gamma_2$-numbers of flag $3$-spheres: Venturello \cite{Venturello} constructed a flag $3$-sphere with $12$ vertices such that 1) it is not a suspension, 2) it admits no admissible edge contractions, and 3) except for the vertex links, it has no induced subcomplexes that are $2$-spheres. The $\gamma_2$-number of this sphere is not zero; in fact, it equals $1$.

	\subsection{A new lower bound on $g_2$}
	Throughout, we assume that $\Delta\in S(2, 4)$ is reduced and that $f_0(\Delta)>8$. Our strategy is to derive a lower bound on the number of vertices of $\Delta$ whose links satisfy $g_2\geq 2$, and then use this bound, together with McMullen's formula, to establish a new lower bound on $g_2(\Delta)$. Specifically, our first goal is to prove the following.
	\begin{proposition}  \label{prop:independent}
		Let $v\in \Delta$ be a vertex with $g_2(\lk(v))=1$. Then every neighbor $u$ of $v$ satisfies $g_2(\lk(u))\geq 2$. In particular, the set of vertices $v$ with $g_2(\lk(v))=1$ forms an independent set in the graph of $\Delta$.
	\end{proposition}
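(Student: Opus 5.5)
The plan is to argue by contradiction, exploiting the rigid structure forced by $g_2(\lk(v))=1$. Suppose $v$ has $g_2(\lk(v))=1$ and some neighbor $u$ also has $g_2(\lk(u))=1$. Since $\Delta\in S(2,4)$, both links lie in $S(2,3)=S(d-2,d-1)$ with $d=4$, so Theorem~\ref{thm:Nevo-Novinsky} applies to each. The only option with $2\le i\le d-2$ is $\partial\sigma^2*\partial\sigma^2$; the sphere $\partial\sigma^1*\partial\sigma^3$ has a missing $3$-face and is therefore excluded. Hence each of $\lk(v)$ and $\lk(u)$ is either $\partial\sigma^2*\partial\sigma^2$ or $C*\partial\sigma^2$ for a cycle $C$ (the first being the case $|C|=3$ of the second). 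The ``in particular'' part then follows immediately from the first assertion.

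The main step is to show that, under this assumption, the edge $uv$ admits an admissible contraction, contradicting reducedness. Otherwise we would need to land in one of the excluded situations: $\Delta=K(2,4)$, which is ruled out by $f_0>8$, or a violation of condition 2 of reducedness. I would first compute $\lk(uv)=\lk(u,\lk(v))$ from the explicit form of $\lk(v)$. If $u$ lies on the cycle $C$, then $\lk(uv)=\partial\sigma^1*\partial\sigma^2$, where the $\partial\sigma^1$ consists of the two neighbors of $u$ on $C$. If $u$ lies in the $\partial\sigma^2$ factor, then $\lk(uv)=C*\partial\sigma^1$. I would do the same from the side of $u$ using the form of $\lk(u)$, and compare the two descriptions to check that $\lk(u)\cap\lk(v)=\lk(uv)$. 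If the intersection were larger, some vertex $w$ would be adjacent to both $u$ and $v$ without $uvw$ being a face, giving a missing $2$-face $uvw$. I would rule this out, or exploit it, by matching the join structures of the two links.

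The expected obstacle is showing that the contracted complex $\Delta'$ still lies in $S(2,4)$, that is, that no missing face of dimension $\ge 3$ is created. A new missing face must contain $u'$, so it has the form $W\cup\{u'\}$ with $|W|\ge 3$, where every proper subset of $W\cup\{u'\}$ containing $u'$ comes from a face of $\Delta$ through $u$ or through $v$, but $W$ lies in neither $\lk(u)$ nor $\lk(v)$. I would first try to show that such a $W$ lies in $\lk(u)\cup\lk(v)$ and straddles the two links. Then I would use the explicit join structures of both links (each link is a join of a cycle and a triangle boundary, and its missing faces are only the triangle $\partial\sigma^2$ and the non-edges of $C$) to show that any such $W$ forces $\lk(u)\cup\lk(v)$ to close up into an induced copy of $\partial\sigma^2*\partial\sigma^2$. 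For instance, the triangle factors of the two links could coincide while the cycles combine. Such an induced $\Gamma\cong\partial\sigma^2*\partial\sigma^2$ separates $\Delta$. The component of $\Delta\setminus\Gamma$ containing $\{u,v\}$ then has at least two vertices, and the other component is nonempty because $f_0>8$; I would then argue that it also has at least two vertices, since a single vertex would have link exactly $\Gamma$. This violates condition 2. I expect this case analysis, namely which of the two link types occurs for $u$ and for $v$ and where $u$ sits in $\lk(v)$, to be the most delicate part, and I would organize it by the position of $u$ in $\lk(v)$ (on $C$ versus in the $\partial\sigma^2$ factor).
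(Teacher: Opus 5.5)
There is a genuine gap. Your main step, showing that $uv$ admits an admissible contraction, cannot succeed when $uv$ lies in a missing $2$-face, because then the contraction is not defined. This is exactly the case that survives, and you leave it as ``rule this out, or exploit it.'' Write $\lk(v)=C*C'$ with $C'=(w_1,w_2,w_3,w_1)$ and $C=(u_1,\dots,u_n,u_1)$. In the paper, the contraction is used only to \emph{force} such a missing face. Suppose $n>3$, $u=u_2$, and $uv$ lies in no missing face. Then the new missing $3$-face $\{u'\}\cup W$ of $\Delta'$ comes from an induced $2$-sphere $S\cong\partial\sigma^2*\partial\sigma^1$ on $\{u,v\}\cup W$, with $u$ a suspension vertex. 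This forces $n=4$ and $\lk(u_2)=\lk(u_4)=C'*(u_1,v,u_3,u_1)$, hence $\Delta=C'*S=K(2,4)$, contradicting $f_0>8$. Your predicted outcome for this case, an induced $\partial\sigma^2*\partial\sigma^2$ violating condition 2, does not occur. The set $W$ is itself a face of $\Delta$, since it is a $u'$-free proper subset of a missing face of $\Delta'$. So in the configuration that actually arises, closing up $\lk(u)\cup\lk(v)$ gives $C'*\partial\overline{W}$, which is not induced.

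Condition 2 is needed in the other case. There the missing face is $vu_2u_j$, and $\Gamma=C'*(v,u_2,u_j,v)$ is an induced $\partial\sigma^2*\partial\sigma^2$ that \emph{contains} $u$ and $v$, rather than separating them from the rest. Condition 2 then only says that one side of $\Gamma$ is a single vertex $t$ with $\lk(t)=\Gamma$. This is not a contradiction by itself: $t$ has link a join of two triangles and is adjacent to $u_2$ with $g_2(\lk(u_2))=1$, which is precisely the configuration you are trying to exclude.

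The missing idea is to settle the case $n=3$ first. Suppose $\lk(v)=(u_1,u_2,u_3,u_1)*C'$ and $g_2(\lk(u_2))=1$.
\begin{itemize}
\item Then $u_1u_2u_3$ is a missing face, since otherwise $vu_1u_2u_3$ would be a missing $3$-face.
\item Hence the cycle of $\lk(u_2)$ through $u_1,v,u_3$ has length at least $4$.
\item Running the contraction argument above with the roles of $v$ and $u_2$ swapped produces a missing face $u_2vx$ with $x\notin\{u_1,u_3\}\cup V(C')$.
\item This is impossible, since $\lk(v)$ has only six vertices.
\end{itemize}
With this base case in hand, the single vertex $t$ gives the contradiction. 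The remaining case, $u\in C'$ with $n>3$, is a direct check using that $w_1w_2w_3$ is missing.
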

	
	In what follows, let $v\in V(\Delta)$ be a vertex with $g_2(\lk(v))=1$. Since $\lk(v)\in S(2, 3)$, Theorem~\ref{thm:Nevo-Novinsky} implies that $\lk(v)$ is the join of two cycles  $C=(u_1, u_2, \dots, u_n, u_1)$ and $C'=(w_1, w_2, w_3, w_1)$, where $n\geq 3$.  We use this notation in the next three lemmas.
	
	\begin{lemma}\label{lm1}
		If $n>3$ and $g_2(\lk(u_2))=1$, then the edge $vu_2$ is contained in a missing $2$-face of the form $vu_2u_j$. Furthermore, the complex $C'*(v, u_2, u_j,v)$ is an induced subcomplex of $\Delta$.
	\end{lemma}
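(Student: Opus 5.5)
The plan is to first identify $\lk(u_2)$ exactly, then use reducedness of $\Delta$ to produce the missing face, and finally check that the resulting $3$-sphere is induced.

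\emph{Step 1: the structure of $\lk(u_2)$.} Since $\lk(u_2)\in S(2,3)$ and $g_2(\lk(u_2))=1$, Theorem~\ref{thm:Nevo-Novinsky} with $d=4$ gives $\lk(u_2)=D*D'$. Here $D$ is a cycle and $D'$ is a triangle; this includes the case $\partial\sigma^2*\partial\sigma^2$, where $D$ is also a triangle. Computing the same edge link from both sides,
$$\lk(vu_2)=\lk(u_2,\lk(v))=\{u_1,u_3\}*C' \quad\text{and}\quad \lk(vu_2)=\lk(v,\lk(u_2)).$$
In a join $D*D'$, the link of a vertex of $D$ is $\{\text{its two }D\text{-neighbors}\}*D'$, and the link of a vertex of $D'$ is $D*\{\text{the other two vertices of }D'\}$.

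Suppose $v\in D'$. Matching the two suspensions of a triangle forces $D=C'$ and $D'=\{v,u_1,u_3\}$. Then $vu_1u_3\in\lk(u_2)$, so $u_1u_3\in\lk(v)=C*C'$, which is impossible because $n>3$. Hence $v\in D$, $D'=C'$, and the two $D$-neighbors of $v$ are $u_1$ and $u_3$. The same argument shows that $D$ is not the triangle $(v,u_1,u_3)$.

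\emph{Step 2: finding the missing face.} Since $\Delta$ is reduced, contracting $vu_2$ is not admissible. So either (a) $vu_2$ lies in a missing face, or (b) the contraction is defined but creates a missing face of dimension $\ge 3$.

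In case (a), the missing face has dimension $\le 2$ and contains an edge, so it has the form $vu_2x$ with $x$ a common neighbor of $v$ and $u_2$ that is not in $\lk(vu_2)$. Every neighbor of $v$ lies in $C\cup C'$, and all of $C'$ together with $u_1,u_3$ lies in $\lk(vu_2)$. Hence $x=u_j$ for some $j\notin\{1,2,3\}$.

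The main obstacle is ruling out case (b). Here $\lk(vu_2)=\lk(v)\cap\lk(u_2)$, and after contraction there is a missing $3$-face $u'abc$ such that every triangle among $a,b,c$ together with $u'$ comes from a face containing $v$ or $u_2$, while $abc\notin\lk(v)\cup\lk(u_2)$. I would rule this out by direct case analysis using the explicit descriptions $\lk(v)=C*C'$ and $\lk(u_2)=D*C'$.
\begin{itemize}
\item The sets $\{a,b,c\}$ must lie in $C\cup C'\cup D$.
\item Triangles using two vertices of $C'$ are present in both links.
\item The remaining configurations would force a triangle on the cycles $C$ or $D$, or a $C'$-triangle $w_1w_2w_3$. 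This is impossible: $C$ and $D$ are cycles of length $>3$, and $w_1w_2w_3$ is missing in both links.
\end{itemize}

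\emph{Step 3: the subcomplex $C'*(v,u_2,u_j,v)$ is induced.} First, every face of $C'*(v,u_2,u_j,v)$ is present in $\Delta$:
\begin{itemize}
\item $u_j\in C$ gives $u_j*C'\subseteq\lk(v)$;
\item $u_j$ is a neighbor of $u_2$ not in $C'$, so $u_j\in D$, which gives $u_j*C'\subseteq\lk(u_2)$;
\item $v*C'\subseteq\lk(u_2)$.
\end{itemize}
Second, the only possible extra faces on these six vertices are $vu_2u_j$, which is missing, and faces containing $w_1w_2w_3$. To exclude $w_1w_2w_3\in\Delta$, I would use that the triangle $\overline{w_1w_2w_3}$ together with $v*C'$ forms a $2$-sphere in $\Delta$ passing through $v$. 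This is incompatible with $C'$ being a missing triangle in $\lk(v)=C*C'$, where $C'$ separates that link, since $C$ has $n>3$ vertices; I expect this to follow from a short separation argument in $\lk(v)$.
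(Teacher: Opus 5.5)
Your proof has a genuine gap in Step~1, and it undermines Step~2. To exclude ``$v\in D'$'' and ``$D$ is the triangle $(v,u_1,u_3)$'' you assert $vu_1u_3\in\lk(u_2)$. Both cases are the single configuration $\lk(u_2)=C'*\partial\overline{vu_1u_3}$. A $3$-cycle join factor contributes only its edges, not its $2$-face. So you only get $u_1u_3\in\lk(u_2)$, i.e.\ $u_1u_2u_3\in\Delta$, and nothing forces $u_1u_3\in\lk(v)$: $vu_1u_3$ can simply be a missing $2$-face of $\Delta$.

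This configuration really occurs, and there the lemma is false unless one uses the standing hypothesis $f_0(\Delta)>8$, which your argument never invokes. Take $\Delta=K(2,4)=C'*\partial\overline{vu_1u_3}*\partial\overline{u_2u_4}$. It is reduced: it has the minimum number $8$ of vertices, and its only induced $\partial\sigma^2*\partial\sigma^2$ has complement the two isolated vertices $u_2,u_4$. Here $\lk(v)=C'*(u_1,u_2,u_3,u_4,u_1)$ with $n=4$, and $\lk(u_2)=C'*\partial\overline{vu_1u_3}$ has $g_2=1$. Yet the missing faces are $w_1w_2w_3$, $vu_1u_3$, $u_2u_4$, so $vu_2$ lies in no missing face. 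Contracting it creates the missing $3$-face $\{v',u_1,u_3,u_4\}$. Hence your case~(b) cannot be eliminated by a local analysis of the two links: the input ``$D$ has length $>3$'' that your bullets rely on is exactly what requires a global argument.

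What is missing is a direct treatment of this case.
\begin{itemize}
\item In case~(b), $\lk(vu_2)=\lk(v)\cap\lk(u_2)$. So the paths $C-u_2$ and $D-v$ share only $u_1,u_3$, and their union is a cycle of length $n+|D|-4$.
\item Your ``triangle on $C$ or $D$'' should read ``triangle in this union'', since mixed triangles are possible a priori. Such a triangle exists only when $n=4$ and $|D|=3$. Then the new missing $3$-face must be $\{v',u_1,u_3,u_4\}$ with $u_1u_3u_4\in\Delta$; a similar check rules out new missing $4$-faces.
\item As in the paper, $\partial(w_iu_1u_3u_4)\subseteq\Delta$ forces $w_iu_1u_3u_4\in\Delta$. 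So $\lk(u_4)$ contains the $2$-skeleton of $C'*\partial\overline{u_1vu_3}$, and since it has no missing $3$-faces, it equals $C'*\partial\overline{u_1vu_3}$.
\item Thus $\Delta\supseteq\st(u_2)\cup\st(u_4)=C'*\partial\overline{vu_1u_3}*\partial\overline{u_2u_4}$, so $\Delta=K(2,4)$, contradicting $f_0(\Delta)>8$.
\end{itemize}
The paper reaches $n=4$ via the induced $2$-sphere $S\cong\partial\sigma^2*\partial\sigma^1$ on $\{v,u_2,x,y,z\}$ with $u_2$ a suspension vertex.

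Separately, Step~3 needs no separation argument: if $w_1w_2w_3\in\Delta$, then $vw_1w_2w_3$ would be a missing $3$-face of $\Delta$.
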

	\begin{proof}
	Since $\Delta\in S(2,4)$, if $e=vu_2$ is not contained in any missing $2$-face, then $e$ is not contained in any missing face at all. Hence, we may contract $e$ to a new vertex $v'$. Denote the resulting complex by $\Delta'$. Then $\Delta'$ is a $4$-sphere. However, by our assumption that $\Delta$ is reduced, $\Delta'$ does not belong to $S(2,4)$, and therefore, it must have a missing $3$-face $F$.
	This face must contain the new vertex $v'$; in other words, $F$ must be of the form $\{v',x,y,z\}$. For this to occur, the subcomplex of $\Delta$ induced by the vertex set $\{v,u_2,x,y,z\}$ must be a $2$-sphere $S\cong \partial\sigma^2*\partial\sigma^1$, and moreover the edge $vu_2$ must belong to $S$. Since the only missing $2$-face in the link of $v$ is $C'$, and since $vu_2$ is an edge of $S$, it follows that $v$ cannot be one of the suspension vertices of $S$. At the same time, since $vu_2$ is not contained in any missing $2$-face, $u_2$ must be one of the suspension vertices of $S$. This implies that $n=4$ and $\lk(v,S)=C=(u_1,u_2,u_3,u_4,u_1)$. 
		
	On the other hand, since $g_2(\lk(u_2))=1$, the complex $\lk(u_2)$ is the join of $C'$ with another cycle containing the path $(u_1, v, u_3)$. The fact that $\partial \overline{u_1vu_3}\subseteq S\subseteq \Delta$ implies that $\lk(u_2)=C'*(u_1, v, u_3, u_1)$. Similarly, $\lk(u_4)$ contains the subcomplex $C'*(u_1, v, u_3)$ as well as the edge $u_1u_3$. Then, for $1\leq i\leq 3$, we have $w_iu_1u_3\in \lk(u_2)$, $w_iu_1u_4, w_iu_3u_4 \in \lk(v)$, and $u_1u_3u_4\in S$. Hence $\partial (w_iu_1u_3u_4)\subseteq \Delta$. Since $\Delta$ has no missing $3$-faces, it follows that $w_iu_1u_3u_4\in \Delta$ for each $w_i$. Thus, all $2$-faces of $C'*(u_1,v,u_3,u_1)$ belong to $\lk(u_4)$. Because $\Delta\in S(2,4)$ and $\lk(u_4)$ is a sphere, we conclude that $\lk(u_4)=C'*(u_1,v,u_3,u_1)$. Consequently, $\Delta\supseteq \st(u_2)\cup \st(u_4)=C'*S$, which implies that $\Delta=C'*S$. This contradicts our assumption that $f_0(\Delta)>8$. Therefore, the edge $e=vu_2$ must be contained in a missing $2$-face. Since $\lk(v)=C*C'$, this missing face must necessarily be of the form $vu_2u_j$.
		
	For the ``furthermore" part, observe first that $w_1w_2w_3$ is a missing $2$-face (otherwise $vw_1w_2w_3$ would be a missing $3$-face of $\Delta$). Also note that each of $\lk(vu_2)$, $\lk(vu_j)$, and $\lk(u_2u_j)$ contains $C'=(w_1,w_2,w_3,w_1)$, and so $C'*(v, u_2, u_j, v)$ is a {\em subcomplex} of $\Delta$. (To justify that $\lk(u_2u_j)$ contains $C'$, note that $\lk(u_2)$ is the join of a $3$-cycle and some other cycle. Since, $\lk(u_2)$ contains $C'$ and since $w_1w_2w_3$ is a missing face, we must have $\lk(u_2)=C'*D$ for some cycle $D$.  Because $u_j\in\lk(u_2)$ but  $u_j\notin C'$, it follows that $u_j\in D$, and hence $\lk(u_2u_j)$ contains $C'$.) Now, both $vu_2u_j$ and $w_1w_2w_3$ are missing $2$-faces. Therefore,  $C'*(v, u_2, u_j, v)$ is an {\em induced} subcomplex of $\Delta$.
	\end{proof}
	
	\begin{lemma}\label{lm2}
		If $\lk(v)$ is the join of two $3$-cycles, then every neighbor $v'$ of $v$ satisfies $g_2(\lk(v'))\geq 2$.
	\end{lemma}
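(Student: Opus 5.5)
We argue by contradiction. Suppose $\lk(v)=C*C'$ with $C=(u_1,u_2,u_3,u_1)$ and $C'=(w_1,w_2,w_3,w_1)$, and suppose some neighbor of $v$ has $g_2(\lk)=1$. By the symmetry of $\lk(v)$ we may take this neighbor to be $u_1$; the cases where it is some $w_i$ are identical with the roles of $C$ and $C'$ exchanged. The plan has three steps: first use reducedness to produce a new vertex $y$, then pin down $\lk(u_1)$ from $g_2(\lk(u_1))=1$, and finally show that $\Delta$ is forced to be $K(2,4)$, contradicting $f_0(\Delta)>8$.

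\textbf{Preliminary facts.} Both $u_1u_2u_3$ and $w_1w_2w_3$ are missing $2$-faces of $\Delta$. For instance, if $u_1u_2u_3\in\Delta$, then $\partial(vu_1u_2u_3)\subseteq\Delta$. Since $vu_1u_2u_3\notin\Delta$ (because $u_1u_2u_3\notin\lk(v)$), this would give a missing $3$-face, which is impossible.

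\textbf{Step 1: contracting $vu_1$.} The edge $vu_1$ lies in no missing face. Indeed, such a face would have the form $vu_1x$ with $x\in\lk(v)$, but the graph of $C*C'$ is $K_6$ and every triangle of $\lk(v)$ through $u_1$ is a face. So $vu_1$ can be contracted. Since $\Delta$ is reduced, the resulting sphere must have a missing $3$-face through the new vertex. As in the proof of Lemma~\ref{lm1}, this gives a set $\{v,u_1,x,y,z\}$ inducing a $2$-sphere $S\cong\partial\sigma^2*\partial\sigma^1$ that contains the edge $vu_1$.

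Next we locate $v$ in $S$. If $v$ were a non-suspension vertex of $S$, then $\lk(v,S)$ would be an induced $4$-cycle in the graph of $\lk(v)$. This graph is $K_6$, so no such cycle exists. Hence $v$ is a suspension vertex of $S$, and $\lk(v,S)$ is an induced triangle in $\lk(v)$ through $u_1$ that is a missing face of $\Delta$. The only such triangle is $C$. Therefore $S=\partial(u_1u_2u_3)*\{v,y\}$ for some vertex $y\notin\lk(v)$.

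\textbf{Step 2: determining $\lk(u_1)$.} Since $g_2(\lk(u_1))=1$ and $\lk(u_1)\in S(2,3)$, Theorem~\ref{thm:Nevo-Novinsky} applies. It shows that $\lk(u_1)$ is a join of two cycles; it contains $C'$, and $w_1w_2w_3$ is missing, so $\lk(u_1)=C'*D$ for some cycle $D$. Because $\lk(u_1)\supseteq \lk(u_1v)=\{u_2,u_3\}*C'$, the cycle $D$ contains the path $(u_2,v,u_3)$. The vertex $y$ is in $\lk(u_1)$ but not in $C'$, since $y$ is not adjacent to $v$. Thus $y\in D$, and the faces $u_1u_2y,u_1u_3y\in S$ make $y$ adjacent to $u_2$ and $u_3$ inside $\lk(u_1)$. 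In $C'*D$ the only edges among vertices of $D$ are the edges of $D$. Hence $D=(u_2,v,u_3,y,u_2)$, that is, $\lk(u_1)=C'*\lk(u_1,S)$.

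\textbf{Step 3: forcing $\Delta=K(2,4)$.} We now mimic the end of the proof of Lemma~\ref{lm1} to show that the links of $u_2$ and $u_3$ also equal $C'*\lk(\cdot,S)$. For each $i$, the four triangles of $\partial(w_iu_2u_3y)$ lie in $\Delta$: the triangles $w_iu_2y$, $w_iu_3y$ and $w_iu_2u_3$ come from $\st(u_1)$ and $\st(v)$, and $u_2u_3y$ comes from $S$. Since $\Delta$ has no missing $3$-faces, $w_iu_2u_3y\in\Delta$. It follows that $\lk(u_2)$ contains every $2$-face of the $3$-sphere $C'*(u_1,v,u_3,y,u_1)$, so the two are equal, and likewise for $u_3$. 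Consequently $\Delta\supseteq\st(u_1)\cup\st(u_2)\cup\st(u_3)\supseteq C'*S$. This is a $4$-sphere, so $\Delta=C'*S\cong K(2,4)$, which has $8$ vertices and contradicts $f_0(\Delta)>8$.

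I expect Step 3 to be the most delicate part: one must check carefully that each required boundary triangle is already available before invoking the absence of missing $3$-faces.
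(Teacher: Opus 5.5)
Your proof is correct and is essentially the paper's argument with Lemma~\ref{lm1} unrolled. The paper assumes $g_2(\lk(u_2))=1$, notes $\lk(u_2)=C'*C''$ with $|C''|\geq 4$, and applies Lemma~\ref{lm1} with the roles of $v$ and $u_2$ swapped to get a missing $2$-face $u_2vx$, i.e.\ a seventh neighbor of $v$; you instead use the structure of $\lk(v)$ up front (its graph is $K_6$, so $vu_1$ lies in no missing face) and rerun that lemma's contraction argument to force $\Delta=C'*S\cong K(2,4)$.

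Your two uses of the inducedness of $S$ (quoted from the proof of Lemma~\ref{lm1}) can also be checked directly. Every triangle of $\Delta$ on $V(\lk(v))$ lies in $\lk(v)$, since the only other triples, $u_1u_2u_3$ and $w_1w_2w_3$, are missing. Hence $x,y,z$ cannot all be neighbors of $v$, as otherwise $vxyz\in\Delta$ and $\{v',x,y,z\}$ would be a face after contraction. This directly makes $v$ a suspension vertex of $S$ whose opposite suspension vertex is not in $\lk(v)$.
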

	\begin{proof}
		Assume that $g_2(\lk(u_2))=1$. Then $\lk(u_2)$ is the join of $C'$ with another cycle $C''$ containing the path $(u_1, v, u_3)$. Since $u_1u_2u_3v$ is not a missing $3$-face, it follows that $u_1u_2u_3$ must be a missing $2$-face. Consequently, the cycle $C''$ has length at least $4$. Now interchange the roles of $v$ and $u_2$ and apply Lemma \ref{lm1}. It follows that the vertex $v\in \lk(u_2)$ must be adjacent to a vertex in $\lk(u_2)$ that lies outside the set $\{u_1, u_3\} \cup V(C')$. However, by assumption, $V(\lk(v))=\{u_1, w_1, u_2,w_2, u_3, w_3\}$, and hence $v$ has no such additional neighbor. This contradiction completes the proof.
	\end{proof}
	
	\begin{lemma}\label{lm3}
		If $n>3$, then every neighbor $v'$ of $v$ satisfies $g_2(\lk(v'))\geq 2$.
	\end{lemma}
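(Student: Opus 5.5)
The plan is to treat the two kinds of neighbors of $v$ separately. Since $\lk(v)=C*C'$, these are the vertices $u_i$ of $C$ and the vertices $w_i$ of $C'$. Throughout, $\Delta$ is reduced, $f_0(\Delta)>8$, and $n\geq 4$. In each case I assume $g_2(\lk(\cdot))=1$ for the neighbor and derive a contradiction.

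\emph{Neighbors $u_i$ on $C$.} By symmetry take $u_2$ and suppose $g_2(\lk(u_2))=1$. Lemma \ref{lm1} gives a missing $2$-face $vu_2u_j$ such that $\Gamma:=C'*(v,u_2,u_j,v)\cong\partial\sigma^2*\partial\sigma^2$ is an induced subcomplex of $\Delta$.
\begin{enumerate}
\item Since $vu_1u_2,vu_2u_3\in\Delta$, we have $j\notin\{1,3\}$. Hence $u_2$ and $u_j$ split $C$ into two nonempty open arcs, and together these arcs contain $n-2\geq 2$ vertices.
\item Because $\Gamma$ is an induced $3$-dimensional homology sphere in the homology $4$-sphere $\Delta$, Alexander duality (over $\Z/2\Z$) shows that $\Delta\backslash\Gamma$ has exactly two connected components.
\item Near $v$ these two components are distinguished by the two arcs of $C$: we have $\lk(v,\Gamma)=C'*\{u_2,u_j\}$ inside $\lk(v)=C*C'$, so the two arcs must lie in different components. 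I expect this separation claim to be the delicate point, and I would verify it carefully.
\item By condition 2 of reducedness, some component then has exactly one vertex $x$. That component contains an arc, so the arc is just $\{x\}$, where $x=u_k$ is adjacent to both $u_2$ and $u_j$.
\item All neighbors of $x$ lie in $\Gamma$, so $\lk(x)\subseteq\Gamma$. Both are $3$-spheres, hence $\lk(x)=\Gamma$, which is the join of two $3$-cycles.
\item Now Lemma \ref{lm2}, applied to $x$, gives $g_2(\lk(v))\geq 2$, since $v$ is a neighbor of $x$. This contradicts $g_2(\lk(v))=1$.
\end{enumerate}

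\emph{Neighbors $w_i$ on $C'$.} Take $w_1$ and suppose $g_2(\lk(w_1))=1$.
\begin{enumerate}
\item By Theorem \ref{thm:Nevo-Novinsky}, $\lk(w_1)=A*B$ for cycles $A,B$, at least one of which is a $3$-cycle.
\item Computing inside $\lk(v)=C*C'$ gives $\lk(vw_1)=C*\{w_2,w_3\}$. Say $v\in A$. Then $\lk(v,\lk(w_1))=\{a,a'\}*B$, where $a,a'$ are the two neighbors of $v$ on $A$.
\item Comparing the two expressions, and using $n\geq 4$ (so $C$ is not a $3$-cycle), forces $B=C$ and $\{a,a'\}=\{w_2,w_3\}$. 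Since $C$ is not a $3$-cycle, $A$ must be the $3$-cycle, so $A=(w_2,v,w_3,w_2)$.
\item In particular $w_2w_3\in\lk(w_1)$, so $w_1w_2w_3$ is a face of $\Delta$.
\item On the other hand, $vw_1w_2w_3\notin\Delta$, because $w_1w_2w_3\notin\lk(v)=C*C'$. Every proper subset of $\{v,w_1,w_2,w_3\}$ is a face, so $vw_1w_2w_3$ would be a missing $3$-face. This contradicts $\Delta\in S(2,4)$.
\end{enumerate}

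Combining the two cases proves the lemma. The main obstacle is the topological step in the first case: showing that the two arcs of $C$ lie in different components of $\Delta\backslash\Gamma$. The ``lonely vertex'' case is then handled cleanly by Lemma \ref{lm2}.
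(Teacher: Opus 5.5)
\textbf{The gap is in step~3 of your $w_i$ case.} The equation $\{a,a'\}*B=\{w_2,w_3\}*C$ pins down the suspension structure only when $n\neq 4$. In that case $w_2,w_3$ are the only vertices of $\{w_2,w_3\}*C$ whose degree differs from $4$.

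For $n=4$, however, $\{w_2,w_3\}*C$ is the octahedron, which is a suspension in three different ways. So you may have $\{a,a'\}=\{u_1,u_3\}$ and $B=(u_2,w_2,u_4,w_3,u_2)$. Then $A$ must be the $3$-cycle $(v,u_1,u_3,v)$. The same holds with $\{u_1,u_3\}$ and $\{u_2,u_4\}$ interchanged. In this configuration $w_2w_3\notin\lk(w_1)$, so your steps 4--5 give nothing.

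This case really occurs. In $K(2,4)=(v,u_1,u_3,v)*\{u_2,u_4\}*(w_1,w_2,w_3,w_1)$ one has $\lk(v)=(u_1,u_2,u_3,u_4,u_1)*C'$ and $\lk(w_1)=(v,u_1,u_3,v)*(u_2,w_2,u_4,w_3,u_2)$, so $g_2(\lk(w_1))=1$. Your $w_i$ argument is purely local and never uses $f_0(\Delta)>8$. It would therefore ``prove'' a false statement for $K(2,4)$.

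\textbf{How to close the case.} In the alternative configuration, $\st(v)\cup\st(w_1)$ consists of all facets of $K(2,4)$ on these eight vertices except $u_1u_2u_3w_2w_3$ and $u_1u_3u_4w_2w_3$.
\begin{itemize}
\item All $2$-faces of $u_1u_3w_2w_3$ lie in $\st(v)\cup\st(w_1)$. Since $\Delta$ has no missing $3$-faces, $u_1u_3w_2w_3\in\Delta$.
\item Now every $3$-face of the two omitted facets lies in $\Delta$. Since $\Delta$ has no missing $4$-faces, both facets belong to $\Delta$.
\item Hence $\Delta\supseteq K(2,4)$, so $\Delta=K(2,4)$, contradicting $f_0(\Delta)>8$.
\end{itemize}
The paper's one-line treatment of the $w_i$ (``$\lk(w_i)$ cannot be the join of $C$ and a $3$-cycle'') passes silently over this same $n=4$ possibility.

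\textbf{Your $u_i$ case is correct.} It is essentially the paper's argument, phrased via components of $\Delta\backslash\Gamma$ rather than the two balls $B_1,B_2$. Your steps 2--4, including the separation claim you flagged as delicate, can simply be dropped:
\begin{itemize}
\item Reducedness directly yields a one-vertex component $\{x\}$ of $\Delta\backslash\Gamma$.
\item Your step~5 shows $\lk(x)=\Gamma$ for any such $x$.
\item Lemma~\ref{lm2} applied at $x$ then contradicts $g_2(\lk(v))=1$, since $v\in\Gamma=\lk(x)$.
\end{itemize}
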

	\begin{proof}
		We first consider neighbors $v'\in C$. Suppose, for contradiction, that $g_2(\lk(u_2))=1$. By Lemma \ref{lm1}, there exists a missing $2$-face of the form $vu_2u_j$ and $\Gamma=C'*(v, u_2, u_j, v)$ is an induced subcomplex of $\Delta$. Write $\Delta=B_1\cup B_2$, where $B_1, B_2$ are the two $4$-balls whose common boundary is $\Gamma$. We claim that each $B_i$ must contain at least two interior vertices. Otherwise, suppose $B_1=t*\Gamma$. Then $g_2(\lk(t))=1$, and by Lemma \ref{lm2}, the neighbor $u_2$ of $t$ must satisfy $g_2(\lk(u_2))\geq 2$, a contradiction. The above argument shows that $\Delta$ is not reduced, which is again a contradiction. Therefore, each $u_i$ satisfies $g_2(\lk(u_i))\geq 2$.
		
		Next, consider neighbors $w_i \in C'$. The link $\lk(w_i)$ contains the join of $C$ and the path $(w_{i+1}, v, w_{i-1})$. Since $\Delta\in S(2, 4)$, the triangle $w_1w_2w_3$ is a missing $2$-face, so $\lk(w_i)$ cannot be the join of $C$ and a $3$-cycle. Consequently, $g_2(\lk(w_i))\geq 2$. This completes the proof.
	\end{proof}
	
	 \smallskip\noindent {\it Proof of Proposition \ref{prop:independent}: } The statement follows from Lemmas \ref{lm2} and \ref{lm3}. \hfill$\square$\medskip 
	
	One consequence of this discussion is the following result.
	
	\begin{proposition} \label{Prop:reduced}
		If $\Delta\in S(2,4)$ is reduced and $f_0(\Delta)>8$, then  $g_2(\Delta)\geq \frac{2}{5}f_0(\Delta)$.
	\end{proposition}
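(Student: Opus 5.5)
We combine McMullen's formula (Lemma \ref{McMullen's formula}) with $k=2$, $d=5$ and Proposition \ref{prop:independent}. Since $\Delta$ is a $4$-sphere, $g_3(\Delta)=0$, so Lemma \ref{McMullen's formula} gives
\[
\sum_{v\in V(\Delta)} g_2(\lk(v)) = 3g_3(\Delta)+4g_2(\Delta)=4g_2(\Delta).
\]
Every vertex link lies in $S(2,3)$, so $g_2(\lk(v))\geq 1$ for all $v$ by Corollary \ref{S(d-i, d-1) implies g_i>0}. Let $A$ be the set of vertices with $g_2(\lk(v))=1$ and $B=V(\Delta)\setminus A$, so that $g_2(\lk(v))\geq 2$ for $v\in B$. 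Then
\[
4g_2(\Delta)\geq |A|+2|B| = 2f_0(\Delta)-|A|.
\]

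By Proposition \ref{prop:independent}, $A$ is an independent set, so $|A|\leq \alpha(\Delta)$. Since $\Delta\in S(2,4)=S(\lfloor 5/2\rfloor,4)$, Proposition \ref{thm: flag lower bounds-revised} with $i=2$ gives $g_2(\Delta)\geq \alpha(\Delta)\geq |A|$. Substituting $|A|\leq g_2(\Delta)$ into the previous inequality yields
\[
4g_2(\Delta)\geq 2f_0(\Delta)-g_2(\Delta),
\]
that is, $5g_2(\Delta)\geq 2f_0(\Delta)$, which is the claim.

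The only delicate point is checking the hypotheses. Proposition \ref{prop:independent} was proved under the standing assumptions that $\Delta$ is reduced and $f_0(\Delta)>8$; both are hypotheses here. Proposition \ref{thm: flag lower bounds-revised} with $d=5$ requires $i=2\leq (d-1)/2=2$ and $j\geq 3$ with $d-j=2$, both of which hold. The substantive work is therefore contained in Proposition \ref{prop:independent}, and the proof itself is a two-line combination of these bounds.
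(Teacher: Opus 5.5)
Your proof is correct and is essentially the paper's argument. The paper combines the same two bounds, $4g_2\ge 2f_0-|I|$ from McMullen's formula and $g_2\ge |I|$ from the independence of $I$, but through a case split on whether $|I|\ge \frac{2}{5}f_0$ (citing the Chudnovsky--Nevo theorem rather than Proposition~\ref{thm: flag lower bounds-revised} for the second bound). Your direct substitution is slightly cleaner, and either citation works here.
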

	\begin{proof}
		Let $I$ denote the set of  vertices $v\in\Delta$ with $g_2(\lk (v))=1$.  By Proposition \ref{prop:independent}, $I$ forms an independent set. There are two cases to consider: either $|I|\geq 2f_0(\Delta)/5$ or $|I|\leq 2f_0(\Delta)/5$. In the former case, $g_2(\Delta)\geq |I|\geq 2f_0(\Delta)/5$ by Theorem \ref{thm: flag lower bounds}. In the latter case, $g_2(\Delta)\geq (2f_0(\Delta)/5+2\cdot 3f_0(\Delta)/5)/4=2f_0(\Delta)/5$ by McMullen's integral formula. In both cases, we conclude that $g_2(\Delta)\geq 2f_0(\Delta)/5$.
	\end{proof}
	
	The main result of this section now follows immediately from Propositions \ref{Prop:reduction} and \ref{Prop:reduced}.

	\begin{theorem}\label{thm: S(2,4)}
		 A sphere in $S(2,4)$ satisfies $g_2\geq \frac{2}{5}f_0-\frac{6}{5}= \frac{2}{5}g_1+\frac{6}{5}g_0$.
	\end{theorem}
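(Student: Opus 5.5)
The plan is to combine Proposition~\ref{Prop:reduction} with Proposition~\ref{Prop:reduced}, taking $\lambda=2/5$. First, check that $\lambda=2/5$ lies in $[1/4,1]$. Then $8\lambda-2=16/5-2=6/5$, so the target inequality $g_2\geq \lambda f_0-(8\lambda-2)$ becomes exactly $g_2\geq \frac{2}{5}f_0-\frac{6}{5}$. By Proposition~\ref{Prop:reduction}, it therefore suffices to prove this inequality for reduced spheres $\Delta\in S(2,4)$.

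For a reduced $\Delta$, split into two cases according to $f_0(\Delta)$. Recall that $f_0\geq 8$ for every element of $S(2,4)$, and that $K(2,4)$ is the unique $8$-vertex element.
\begin{itemize}
\item If $f_0(\Delta)=8$, then $\Delta=K(2,4)$. Its $h$-polynomial is $(1+t+t^2)^2(1+t)$, which gives $h=(1,3,6,6,3,1)$. Hence $g_2=3$? We must recheck this. The $h$-polynomial of a join is the product of the $h$-polynomials, and $(1+t+t^2)^2=1+2t+3t^2+2t^3+t^4$. Multiplying by $1+t$ gives $1+3t+5t^2+5t^3+3t^4+t^5$. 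So $g_1=2$ and $g_2=2$, which agrees with the paper's statement that $g_2(K(2,4))=2$. Since $\frac{2}{5}\cdot 8-\frac{6}{5}=2$, equality holds.
\item If $f_0(\Delta)>8$, then Proposition~\ref{Prop:reduced} gives $g_2(\Delta)\geq \frac{2}{5}f_0(\Delta)\geq \frac{2}{5}f_0(\Delta)-\frac{6}{5}$.
\end{itemize}

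Finally, rewrite the bound using $g_1=f_0-5$, since here $d=5$. This gives $\frac{2}{5}f_0-\frac{6}{5}=\frac{2}{5}(g_1+5)-\frac{6}{5}=\frac{2}{5}g_1+\frac{4}{5}$. This does not match the stated form $\frac{2}{5}g_1+\frac{6}{5}g_0$. One of two things must be true: either the paper uses $g_1=f_0-6$ (as in its introduction, "$g_1=f_0-6$"), or there is a typo. With $g_1=f_0-6$, we get $\frac{2}{5}(g_1+6)-\frac{6}{5}=\frac{2}{5}g_1+\frac{6}{5}$, which matches. Indeed, for a $4$-sphere $d=5$ and $g_1=h_1-h_0=f_0-d-1+\dots$. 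More precisely, $h_1=f_0-d=f_0-5$, so $g_1=f_0-6$. With this, the identity is a direct substitution.

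There is no real obstacle in this step. All of the substance lies in the reduction (Proposition~\ref{Prop:reduction}) and in the independence/McMullen argument (Proposition~\ref{Prop:reduced}), both of which are already established. The only care needed is the base case $K(2,4)$, which is excluded from Proposition~\ref{Prop:reduced}, and the arithmetic of the constant $8\lambda-2$.
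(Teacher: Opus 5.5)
Your proof is correct and follows the paper's argument exactly. You apply Proposition~\ref{Prop:reduction} with $\lambda=2/5$ (so $8\lambda-2=6/5$), use Proposition~\ref{Prop:reduced} for reduced spheres with $f_0>8$, and treat the only $8$-vertex case, $K(2,4)$, where equality holds: $g_2(K(2,4))=2=\frac{2}{5}\cdot 8-\frac{6}{5}$. Your corrected computations, $h(K(2,4))=(1,3,5,5,3,1)$ and $g_1=h_1-h_0=f_0-6$ for $d=5$, are right, so the stated identity $\frac{2}{5}f_0-\frac{6}{5}=\frac{2}{5}g_1+\frac{6}{5}g_0$ has no typo, and you should delete the false starts from a final write-up.
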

	
	\section{The $g$-numbers of spheres in $S(j, d-1)$}   \label{sec:g-numbers}
	In this section, we study the $g_i$-numbers of spheres in $S(j, d-1)$ for general $i$ and $j$. Our approach  is inspired by a result of Chudnovsky and Nevo---Theorem \ref{thm: flag lower bounds}---and its generalization in Proposition \ref{thm: flag lower bounds-revised}. In those results, one uses an independent set in the graph of $\Delta$, together with Corollary \ref{main cor}, to construct a linearly independent collection of stresses on $\Delta$. The main theorems of this section are obtained by further developing and applying this idea to spheres in $S(j, d-1)$.

	\begin{theorem}\label{thm: flag lower bound II}
		Let $d\geq 5$ and let $\Delta$ be a flag normal $(d-1)$-pseudomanifold. 
		Then the following holds:
		\begin{enumerate}
			\item $g_2(\Delta)\geq (d-4)\alpha(\Delta)$. This, in turn, implies that $$g_2(\Delta) \geq \frac{\sqrt{4d^2+12d-31}-(2d+1)}{4} f_0(\Delta).$$
			\item Furthermore, if $\Delta\in S(1, d-1)$, then $g_i(\Delta)\geq (d-2i)\alpha(\Delta)$ for all $2\leq i\leq \frac{d-1}{2}$.		
		\end{enumerate}
	\end{theorem}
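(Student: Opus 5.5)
The plan is to strengthen the proof of Proposition \ref{thm: flag lower bounds-revised}. There, each vertex $v$ of a maximum independent set $I$ contributes one stress supported in $\st(v)$. Here I want each $v\in I$ to contribute $d-2i$ stresses, all supported in $\st(v)$, each having $v$ in its support, and linearly independent "near $v$". Independence of the whole family then follows as before. Suppose $\sum c_\omega\omega=0$ and fix $v\in I$. Every monomial divisible by $x_v$ comes only from stresses of $v$'s group, since $v\notin\st(u)$ for $u\in I\setminus\{v\}$. So it suffices that the $v$-group is independent when we look only at monomials whose support contains $v$. This gives $g_i(\Delta)\ge (d-2i)\alpha(\Delta)$. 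Throughout I use that links of faces of a flag sphere are flag spheres. A flag $(m-1)$-sphere lies in $S(1,m-1)\subseteq S(m-i,m-1)$ whenever $i\le m-1$, so Corollary \ref{S(d-i, d-1) implies g_i>0} gives $g_i\ge 1$ for all links of dimension $m-1\ge 2i-1$. This is exactly what Corollary \ref{main cor} needs for $(k-1)$-faces with $k\le d-2i$.

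The stresses for a fixed $v\in I$ are built adaptively along a chain $\tau_1=\{v\}\subset\tau_2\subset\cdots$, with $\tau_k=\{v,x_2,\dots,x_k\}$ a face. For $k=1$, let $\omega_1\in\Stress^a_i(\st(v))$ be given by Corollary \ref{main cor}. Pick a vertex $z_1\ne v$ such that $vz_1$ lies in a monomial of $\omega_1$. Since $\lk(v)$ is a flag sphere and hence not a cone, there is a vertex $x_2\in\lk(v)$ not adjacent to $z_1$. Now suppose $\tau_{k-1}$ has been chosen, and a new vertex $y_k\in\lk(\tau_{k-1})$ has been chosen non-adjacent to $x_{k-1}$ (again possible because $\lk(\tau_{k-1})$ is a flag sphere, not a cone, of dimension $\ge 2i-1\ge 1$). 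Apply Corollary \ref{main cor} to the $(k-1)$-face $\tau_{k-1}\cup\{y_k\}$. This gives $\omega_k$ supported in $\st(\tau_{k-1}\cup y_k)\subseteq\st(\tau_{k-1})$, and since $i\ge 2$ its support contains the edge $vy_k$. Then choose $x_k\in\lk(\tau_{k-1})$ and continue, up to $k=d-2i$.

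The triangularity check comes next. For $l>k$, the support of $\omega_l$ lies in $\st(\tau_{l-1})\subseteq\st(x_k)$ (taking $y_1:=z_1$ and $x_1$-type bookkeeping appropriately). The distinguished vertex $y_k$ of $\omega_k$ is not adjacent to the chosen $x_k$, so it does not appear in later $\omega_l$. Hence the monomial of $\omega_k$ containing $vy_k$ appears in no $\omega_l$ with $l>k$, and the $v$-group is independent modulo monomials avoiding $v$. Getting the indexing of $x_k$ versus $y_k$ consistent, so that each distinguished edge really avoids all later stars, is where I expect the main care to be needed. If the naive choice fails, I would pick each $x_k$ as a non-neighbor of the previous distinguished vertex inside the relevant flag link, which is always available. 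For part 1, I take $i=2$ for a flag normal pseudomanifold. The links used have dimension $\ge 3$, are flag normal pseudomanifolds, and so have $g_2\ge 1$ (Lower Bound Theorem together with the rigidity characterization; they are not stacked since flag). Remark \ref{rm:sphere->normal-ps} lets Corollary \ref{main cor} apply, and the same argument gives $g_2\ge(d-4)\alpha$.

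For the numerical bound, write $x=g_2/f_0$. Since $f_1=g_2+df_0-\binom{d+1}{2}\le g_2+df_0$, Turán's theorem gives $\alpha\ge f_0^2/(2g_2+(2d+1)f_0)$. Combining with $g_2\ge(d-4)\alpha$ yields $x(2x+2d+1)\ge d-4$, i.e. $2x^2+(2d+1)x-(d-4)\ge 0$. Taking the positive root gives $x\ge\frac{\sqrt{(2d+1)^2+8(d-4)}-(2d+1)}{4}=\frac{\sqrt{4d^2+12d-31}-(2d+1)}{4}$, as claimed.
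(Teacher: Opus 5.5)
Your global plan is the paper's: a maximum independent set $I$, a chain of faces through each $v\in I$ built from non-adjacent pairs in flag links, Corollary~\ref{main cor}, a triangularity argument, and Tur\'an. Three parts are fine: your step across $I$ (restricting to monomials divisible by $x_v$), the pseudomanifold case for $i=2$, and the final quadratic computation.

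The gap is in the per-vertex triangularity, which is exactly where the factor $d-2i$ comes from.

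First, the condition ``$y_k\in\lk(\tau_{k-1})$ non-adjacent to $x_{k-1}$'' cannot be met. Since $x_{k-1}\in\tau_{k-1}$, every vertex of $\lk(\tau_{k-1})$ is adjacent to $x_{k-1}$.

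Suppose you read it as in your check instead, with $x_k,y_k$ a non-adjacent pair in $\lk(\tau_{k-1})$. Then the argument works for $k\ge 2$, but it fails at the junction with $\omega_1$. You attach $\omega_1$ to the chain face $\{v\}$ but $\omega_2$ to the side face $vy_2$. So $\supp(\omega_2)\subseteq\st(vy_2)$, which need not lie in $\st(x_2)$, and nothing keeps $z_1$ out of it. Your inclusion $\st(\tau_{l-1})\subseteq\st(x_k)$ needs $x_k\in\tau_{l-1}$, and there is no such $x_k$ when $(k,l)=(1,2)$. Your fallback (choose $x_k$ non-adjacent to the previous distinguished vertex) does not help, because $\omega_2$ does not involve $x_2$ at all.

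This is a genuine failure, not a missing remark. In the boundary of a cross-polytope every vertex has a unique non-neighbour. So ``$x_2$ non-adjacent to $z_1$'' and ``$y_2$ non-adjacent to $x_2$'' force $y_2=z_1$. If $\omega_1$ happens to lie in $\Stress^a_i(\st(vz_1))$, which Corollary~\ref{main cor} permits, then $\omega_2=\omega_1$ is a legitimate output for the face $vy_2$, and your family is dependent.

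The repair is to be consistent about where the stresses live. The paper uses side faces throughout and moves the extra stress to the \emph{top} of the chain:
\begin{itemize}
\item For $2\le k\le d-2i$, choose non-adjacent $u_k,w_k\in\lk(vu_2\cdots u_{k-1})$ and extend the chain by $u_k$.
\item Put $\omega_k$ on $\st(vu_2\cdots u_{k-1}w_k)$.
\item Put the remaining stress on $\st(vu_2\cdots u_{d-2i})$ rather than on $\st(v)$.
\end{itemize}
Then every $\omega_l$ with $l>k$, and the top stress, lie in $\st(u_k)$, which misses $w_k$. Meanwhile $\omega_k$ has a monomial containing $vw_k$. Taking the smallest $k$ with nonzero coefficient gives independence modulo monomials avoiding $v$.

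Alternatively, continue as you began, with every stress on a chain face:
\begin{itemize}
\item Attach each $\omega_k$ to the chain face $\tau_k$.
\item Choose $x_{k+1}\in\lk(\tau_k)$ non-adjacent to a vertex $z_k\in\lk(\tau_k)$ that shares a monomial of $\omega_k$ with $v$.
\end{itemize}
Then for $l>k$, $\supp(\omega_l)\subseteq\st(\tau_l)\subseteq\st(x_{k+1})$, which misses $z_k$. The vertex $z_k$ exists by the proof of Corollary~\ref{main cor} (iterating Lemma~\ref{cone lemma}). That proof gives support $\skel_{i-1}(\overline{\tau_k}*\supp(\omega''))$ for some nonzero stress $\omega''$ on $\lk(\tau_k)$, and since $i\ge 2$ this contains $vz$ for any vertex $z$ of $\supp(\omega'')$.

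With either repair, the rest of your proof goes through.
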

	\begin{proof} Let $i\leq \frac{d-1}{2}$, and let
		$I$ be a maximum independent set in the graph of $\Delta$; that is, $|I|=\alpha(\Delta)$. Let $v\in I$ and let $F^v_1:=v$. Since $\Delta$ is flag, the link of any face $F$ is also flag. Consequently, if $\lk(F)$ is nonempty, then it contains two nonadjacent vertices. Specifically, choose nonadjacent vertices $u_2,w_2$  in $\lk(v)$, and let $F^v_2=vu_2$, $G^v_2=vw_2$ be the corresponding $1$-faces of $\Delta$. Continuing inductively, for $2\leq k<d-2i-1$, assume that two $(k-1)$-faces $F^v_k=vu_2\dots u_{k-1}u_k\in\Delta$ and $G^v_k=vu_2\dots u_{k-1}w_k\in\Delta$ have already been defined. Then we choose a pair of nonadjacent vertices $u_{k+1}, w_{k+1}\in\lk(F^v_k)$, and define the corresponding $k$-faces  $F^v_{k+1}=vu_2u_3\dots u_ku_{k+1}$ and $G^v_{k+1}=vu_2u_3\dots u_{k}w_{k+1}$. Finally, we define $G^v_{1}=vu_2u_3\dots u_{d-2i}$. 
		
		First assume that $\Delta\in S(1, d-2)$. Choose a generic embedding of $V(\Delta)$ and, for $2\leq k\leq d-2i$, apply Corollary \ref{main cor} to the face $G^v_k$. Note that $\lk(G_k^v)$ is a flag sphere of dimension $d-k-1$ and that, by our choice of vertices $u_j,w_j$, the edge $vw_k$ is contained in $G_k^v$, while for all $\ell>k$, the edge $vw_\ell$ is not contained in $\st(G_k^v)$. It follows from Corollary \ref{main cor} that, for all $i\leq (d-k)/2$, there exists  an affine $i$-stress $\omega_k^v$ on $\st(G_k^v)$ such that $vw_k\in\supp(\omega_k^v)$, while $vw_\ell\notin \supp(\omega_k^v)$  for all $\ell>k$.
			Likewise, there exists an affine $i$-stress $\omega^v_1$ on $\st(G^v_1)$ such that none of the edges $vw_2, \dots, vw_{d-2i}$ lie in the support of $\omega^v_1$. Hence $\Omega^v:=\{\omega^v_1, \dots, \omega^v_{d-2i}\}$ is a set of linearly independent stresses.
		
		Now consider $\bigcup_{v\in I} \Omega^v$. The support of any stress in $\Omega^v$ contains $v$, but  contains no other vertices of $I\backslash v$. Hence, the set of stresses $\bigcup_{v\in I}\Omega^v$ is also linearly independent. We conclude that 
		\begin{equation} \label{g_j vs (d-2i)I}
		g_i(\Delta)\geq \big|\bigcup_{v\in I}\Omega^v\big|=(d-2i)|I|=(d-2i)\alpha(\Delta).\end{equation}
		
	 	We now consider the case in which $\Delta$ is a flag normal pseudomanifold. In this setting, the link of every face of codimension $\geq 4$ is itself a flag normal pseudomanifold with $g_2\geq 1$ \cite{Tay}. Together with Remark \ref{rm:sphere->normal-ps}, this shows that when $i=2$, the preceding  argument applies equally well to flag normal pseudomanifolds. Consequently, the inequality $g_2(\Delta)\geq (d-4)\alpha(\Delta)$ continues to hold in this broader setting.
		
		To prove the inequality on $g_2$ in terms of $f_0$, let $\epsilon=\frac{\sqrt{4d^2+12d-31}-(2d+1)}{4}$ and assume, for the sake of contradiction, that $g_2< \epsilon f_0$. Then $f_1=g_2+df_0-{d+1 \choose 2}<(d+\epsilon)f_0$.  Hence, by Tur\'an's theorem, there exists an independent set $I$ with $|I| \geq \frac{f_0}{2(d+\epsilon)+1}$. Combining this with \eqref{g_j vs (d-2i)I}, yields $$\epsilon f_0>g_2\geq \frac{(d-4)f_0}{2(d+\epsilon)+1},$$ which implies that $\epsilon> \frac{\sqrt{4d^2+12d-31}-(2d+1)}{4}$. This is the desired contradiction.
	\end{proof}
	
	\begin{remark}
		Since $4d^2+12d-31=(2d+3)^2-22$, it follows that flag normal $(d-1)$-pseudomanifolds satisfy $g_2\geq (1/2-\delta(d))f_0$, where $\delta(d)$ is a function of $d$ with $\delta(d)\to 0$ as $d\to \infty$.
	\end{remark}
	
	Our next result relaxes the flagness assumption of Theorem \ref{thm: flag lower bound II}(2).
	
	\begin{theorem}\label{thm: lower bound on g II}
	Let $\Delta\in S(j, d-1)$, where $j=\min \{\lfloor \frac{d-1}{2}\rfloor-1, d-2i\}$. Then 
	$g_i(\Delta)\geq 2\alpha(\Delta)$ for all $2\leq i\leq d/3$.
	\end{theorem}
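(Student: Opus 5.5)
Fix $2\le i\le d/3$ and a maximum independent set $I$ of the graph of $\Delta$. For each $v\in I$ I will construct two affine $i$-stresses $\omega^v_1,\omega^v_2$, both supported in $\st(v)$ and both containing $v$ in their support, and chosen so that they are linearly independent. The argument at the end of the proof of Theorem \ref{thm: flag lower bound II} then applies verbatim. A stress coming from $v$ involves no other vertex of $I$, so $\bigcup_{v\in I}\{\omega^v_1,\omega^v_2\}$ is linearly independent and $g_i(\Delta)\ge 2|I|=2\alpha(\Delta)$ by Theorem \ref{thm: Lefschetz}.

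For $\omega^v_1$, apply Corollary \ref{main cor} to the vertex $v$ with $k=1$. We have $\lk(v)\in S(j,d-2)$ and $j\le d-2i<(d-1)-i$, so Corollary \ref{S(d-i, d-1) implies g_i>0} gives $g_i(\lk(v))\ge1$, using $i\le (d-1)/2$. For $\omega^v_2$ I mimic the flag construction with a single step. The link $\lk(v)$ is a $(d-2)$-sphere with no missing faces of dimension $>j$. Since $j<d-1$, it is not the boundary of a simplex, so it has a missing face. I will choose two vertices $u,w\in\lk(v)$ such that $w\notin \st(u,\lk(v))$, that is, $vuw\notin\Delta$, while $uw$ may or may not be an edge. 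Set $\omega^v_1$ to be a stress supported on $\st(vu)$ and $\omega^v_2$ a stress on $\st(vw)$ whose support contains the edge $vw$; the latter comes from Corollary \ref{main cor} with $k=2$ and $\tau=vw$. Since $vw\notin\st(vu)$, the edge $vw$ lies in the support of $\omega^v_2$ but not of $\omega^v_1$, giving linear independence. Both stresses also contain $v$ in their support, since they come from Corollary \ref{main cor}.

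The key requirements are therefore $g_i(\lk(vu))\ge1$ and $g_i(\lk(vw))\ge 1$, where these links are $(d-3)$-spheres in $S(j,d-3)$ and $i\le (d-2)/2$. By Corollary \ref{S(d-i, d-1) implies g_i>0} applied with $d-2$ in place of $d$, it suffices that $j\le (d-2)-i$. This follows from $j\le d-2i\le d-2-i$, using $i\ge2$. The hypothesis $i\le d/3$ together with $j\le\lfloor (d-1)/2\rfloor -1$ guarantees that the needed index ranges are consistent: $i\le (d-2)/2$, and the missing-face dimension bound survives passing to links.

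The main obstacle is the existence of $u,w$ with $vuw\notin\Delta$. This holds unless $\lk(v)$ is a complete graph on its vertex set. If all vertices of $\lk(v)$ were pairwise adjacent, then, since $\lk(v)\in S(j,d-2)$ and $j\le\lfloor (d-1)/2\rfloor-1$, I would use the standard fact that a sphere in $S(j,\cdot)$ with $j$ small relative to the dimension cannot be $2$-neighborly. The plan is to argue via missing faces: a missing face $\sigma$ of $\lk(v)$ has $3\le|\sigma|\le j+1$, and $\lk(v)$ restricted near $\sigma$ forces a vertex not adjacent to some vertex of $\sigma$. Failing that, I would bound $f_0$ against $f_1$ using the fact that $K(j,\cdot)$-type spheres minimize face numbers in $S(j,\cdot)$. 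This case analysis, rather than the stress construction itself, is where I expect the real work to lie.
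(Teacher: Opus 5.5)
Your construction is sound when the graph of $\lk(v)$ is not complete. It is exactly the case $r=2$ of the paper's argument, and your index checks ($j\le d-2i\le d-2-i$, $i\le (d-2)/2$) are correct. This requires taking $\omega^v_1$ on $\st(vu)$ rather than on $\st(v)$ as in your first paragraph, since a stress on $\st(v)$ may involve the edge $vw$.

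The genuine gap is the remaining case, and your plan for it cannot succeed. Spheres in $S(j,\cdot)$ with $j\ge 2$ can have complete graphs however small $j$ is compared to the dimension. Concretely, for $d=7$ and $i=2$ the theorem takes $j=2$. Consider $\Delta=\partial\sigma^1*\partial\sigma^2*\partial\sigma^2*\partial\sigma^2\in S(2,6)$. Its graph is $K_{11}$ minus the edge $ab$ joining the two suspension vertices, so $\{a,b\}$ is the unique maximum independent set. Yet $\lk(a)=\lk(b)=\partial\sigma^2*\partial\sigma^2*\partial\sigma^2$ has a complete graph. So exactly at the vertices you need, there are no $u,w$ with $vuw\notin\Delta$. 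A warning sign is that your argument never uses the hypothesis $j\le\lfloor\frac{d-1}{2}\rfloor-1$; in the paper that hypothesis is essential.

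Two ideas are missing.

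First, suppose $\lk(v)$ has a missing $(r-1)$-face $F=u_1\cdots u_r$ with $3\le r\le i$. Separate the two stresses by the $(r-1)$-faces $G_1=vu_1\cdots u_{r-1}$ and $G_2=vu_1\cdots u_{r-2}u_r$ instead of by edges. Then $G_1\cup G_2\notin\Delta$. Moreover $g_i(\lk(G_k))\ge 1$, because $j\le d-2i\le d-r-i$ and $i\le\frac{d-r}{2}$; this is where $i\le d/3$ is used.

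Second, suppose the $(i-1)$-skeleton of $\lk(v)$ is complete. Then no separation by faces is possible, and the stresses must be built directly.
\begin{itemize}
\item Every $(d-2)$-sphere with at most $d+1$ vertices is $\partial\sigma^{d-1}$ or $\partial\sigma^\ell*\partial\sigma^{d-1-\ell}$. Hence it has a missing face of dimension at least $\frac{d-1}{2}$.
\item So the hypothesis $j\le\lfloor\frac{d-1}{2}\rfloor-1$ forces $f_0(\lk(v))\ge d+2$.
\item Choose $d+2$ vertices $W=\{w_1,\dots,w_{d+2}\}$ of $\lk(v)$. Let $\delta_1$ and $\delta_2$ be the generic affine dependences on $(W\setminus w_1)\cup v$ and on $(W\setminus w_2)\cup v$. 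These are affine $1$-stresses with all coefficients nonzero.
\item Their powers $\delta_1^i$ and $\delta_2^i$ are affine $i$-stresses, because $\skel_{i-1}(\overline{W})*v\subseteq\Delta$. Both are supported in $\st(v)$, and their supports differ, so they are linearly independent.
\end{itemize}
In the example above, it is this second construction that handles $a$ and $b$.
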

	\begin{proof}
		Consider a generic embedding of $\Delta$ in $\R^d$. Let $I$ be an independent set of the graph of $\Delta$ of size $\alpha(\Delta)$, and fix $v\in I$. There are two possible cases to consider.
		
		{\bf Case 1:} The link $\lk(v)$ contains a missing $(r-1)$-face $F=u_1u_2\dots u_r$ for some $2\leq r\leq i$. Define the two $(r-1)$-faces $G_1=vu_1\dots u_{r-2}u_{r-1}$ and $G_2=vu_1\dots u_{r-2}u_{r}$. Since $F$ is a missing face, $G_1\notin\st(G_2)$ and $G_2\notin\st(G_1)$. Next, observe that $i\leq \frac{d-i}{2}\leq \frac{d-r}{2}$. Because $\Delta$ has no missing faces of dimension $\geq d-2i+1$, it also has no missing faces of dimension $\ge d-r-i+1 \ge d-2i+1$. Therefore, $g_i(\lk(G_1))\geq 1$ and $g_i(\lk(G_2))\geq 1$. By Corollary \ref{main cor}, for each $k=1,2$, there exists an affine $i$-stress $\omega^k_v$ on $\st(G_k)$ whose support contains the face $G_k$. Since $G_1\not\in\st(G_2)$ and $G_2\notin\st(G_1)$, we conclude that $\omega^1_v$ and $\omega^2_v$ are linearly independent.
	
		{\bf Case 2:} The link $\lk(v)$ has the complete $(i-1)$-skeleton.  Since $\lk(v)\in S(j, d-2)$ with $j\leq \lfloor \frac{d-1}{2}\rfloor-1$, it follows that $\lk(v)$ has at least $d+2$ vertices. (Indeed, every $(d-2)$-sphere with exactly $d+1$ vertices is of the form $\partial\sigma^\ell*\partial\sigma^{d-1-\ell}$ for some $1\leq\ell\leq d-2$, and therefore has a missing face of dimension $\geq (d-1)/2$.)  Let $W=\{w_1,\dots,w_{d+2}\}$ be any set of $d+2$ such vertices, and define $W_1=W\backslash w_1$ and $W_2=W\backslash w_2$. By our choice of generic embedding, for $k=1,2$, the affine dependence among the points $W_k\cup v$ yields an affine $1$-stress $\delta_k=\sum_{u\in W_k\cup v } \nu_{u, k}x_u\in \Stress^a_1(\Delta)$, where $\nu_{u, k}\neq 0$ for all $u\in W_k\cup v$. Since $\skel_{i-1}(\overline{W})*v$ is a subcomplex of $\Delta$, it follows that $\omega_v^1:=(\delta_1)^i$ and $\omega_v^2:=(\delta_2)^i$ are affine $i$-stresses with supports $\skel_{i-1}(\overline{W_1\cup v})$ and $\skel_{i-1}(\overline{W_2\cup v})$, respectively. These supports are distinct, and therefore $\omega_v^1$ and $\omega_v^2$ are linearly independent.
		
		Finally, since $I$ is an independent set, the collection $\bigcup_{v\in I} \{\omega^1_v, \omega^2_v\}$ of affine $i$-stresses is linearly independent. This gives $g_i(\Delta)\geq 2\alpha(\Delta)$, as claimed.
	\end{proof}
	
Theorem \ref{thm: flag lower bound II}(1) provides a lower bound on  $g_2$ in terms of $f_0$ for flag spheres. The next theorem generalizes this result, giving a lower bound on $g_{k+1}$ in terms of $f_{k-1}$ for certain values of $k>1$, for spheres without large missing faces.
	
	\begin{theorem}\label{thm: g_{k+1} in terms of f_{k-1}}
	Let $k\geq 2$ and $d\geq 3(k+1)$. Then there exists a positive constant $c_k$, depending only on $k$, such that for every sphere $\Delta\in S(d-2-2k,d-1)$ with sufficiently many vertices,
	\begin{enumerate}
	\item $g_{k+1}(\Delta)\geq c_k\cdot \big(f_{k-1}(\Delta)\big)^{\frac{2k+2}{3k+1}}$ if $k\leq d/4$, and
	\item $g_{k+1}(\Delta)\geq c_k\cdot \big(f_{k-1}(\Delta)\big)^{\frac{2k+2}{k+1+\lfloor d/2\rfloor}}$ if $k\geq d/4$.
	\end{enumerate}
	\end{theorem}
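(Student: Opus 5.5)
The plan is to generalize the independent-set argument of Proposition \ref{thm: flag lower bounds-revised} from vertices to $(k-1)$-faces. Call a family $\mathcal{T}$ of $(k-1)$-faces of $\Delta$ \emph{strongly independent} if $\tau\cup\tau'\notin\Delta$ for all distinct $\tau,\tau'\in\mathcal{T}$. Equivalently, $\tau'\notin\st(\tau)$ and $\tau\notin\st(\tau')$.

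First I would show that $g_{k+1}(\lk(\tau))\ge 1$ for every $(k-1)$-face $\tau$. The link $\lk(\tau)$ is a $(d-k-1)$-sphere whose missing faces have dimension at most $d-2-2k=(d-k)-(k+1)-1$, so it lies in $S((d-k)-(k+1),(d-k)-1)$. Since $d\ge 3k+2$ gives $k+1\le\lfloor (d-k)/2\rfloor$, Corollary \ref{S(d-i, d-1) implies g_i>0} applies and yields $g_{k+1}(\lk(\tau))\ge 1$. Then Corollary \ref{main cor}, with $i=k+1\le (d-k)/2$, gives a stress $\omega_\tau\in\Stress^a_{k+1}(\st(\tau))$ whose support contains $\skel_k(\overline{\tau})\ni\tau$. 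For a strongly independent family, $\tau$ lies in the support of $\omega_\tau$ but in no other $\omega_{\tau'}$. As in the proof of Proposition \ref{thm: flag lower bounds-revised}, the $\omega_\tau$ are therefore linearly independent, and Theorem \ref{thm: Lefschetz} gives
\[
g_{k+1}(\Delta)\ge |\mathcal{T}|.
\]

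Next, let $H$ be the conflict graph on the $(k-1)$-faces, where $\tau\sim\tau'$ iff $\tau\cup\tau'\in\Delta$. Each such edge comes from a face $\sigma=\tau\cup\tau'$ with $k+1\le|\sigma|\le 2k$, and each face of size $m$ produces at most $\binom{m}{k}^2$ edges. Hence
\[
f_1(H)\le C_k\sum_{m=k+1}^{2k} f_{m-1}(\Delta).
\]
Turán's theorem then gives a strongly independent family with $|\mathcal{T}|\ge f_{k-1}^2/(2f_1(H)+f_{k-1})$.

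The main step, and the one I expect to be the obstacle, is bounding $f_{m-1}$ for $m\le 2k$ by $O\bigl(f_{k-1}+g_{k+1}^{e}\bigr)$ with the right exponent $e$. I would write $f_{m-1}$ as a nonnegative combination, with coefficients depending only on $k$ and $m$, of $h_0,\dots,h_m$. I would then use $h_j=h_{d-j}$ to replace each index by $\min(j,d-j)\le \min(2k,\lfloor d/2\rfloor)$, and expand $h_j=\sum_{i\le j}g_i$.
\begin{itemize}
\item The terms $g_i$ with $i\le k$ are at most $h_i\le C f_{i-1}\le C' f_{k-1}$. For the last inequality I would use a simple face-counting comparison valid for spheres when $k\le d/2$.
\item The terms $g_j$ with $j\ge k+1$ are controlled by Macaulay's theorem in its real-binomial form. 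Writing $g_{k+1}=\binom{x}{k+1}$, we get $g_j\le\binom{x+j-k-1}{j}=O\bigl(g_{k+1}^{j/(k+1)}\bigr)$, since $g_{k+1}\ge1$ and the constants depend only on $k$.
\end{itemize}
This should give
\[
f_1(H)\le C\bigl(f_{k-1}+g_{k+1}^{s/(k+1)}\bigr),\qquad s=\min(2k,\lfloor d/2\rfloor),
\]
so $s=2k$ when $k\le d/4$ and $s=\lfloor d/2\rfloor$ otherwise. I still need to check that the constants depend only on $k$ and not on $d$. If they do depend on $d$, I would restrict to the relevant range of $d$ relative to $k$, or absorb the dependence as the statement permits; I may need to revisit this point.

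Finally I would combine the two bounds. If the $f_{k-1}$ term dominates, then $g_{k+1}\ge c f_{k-1}$, which is stronger than the claim for large $f_{k-1}$; this is where "sufficiently many vertices" is used. Otherwise
\[
g_{k+1}\ge c\, f_{k-1}^2\, g_{k+1}^{-s/(k+1)},
\]
which rearranges to $g_{k+1}^{(k+1+s)/(k+1)}\ge c f_{k-1}^2$. This gives exponent $\frac{2k+2}{3k+1}$ for $s=2k$ and $\frac{2k+2}{k+1+\lfloor d/2\rfloor}$ for $s=\lfloor d/2\rfloor$, which are exactly the two cases of the theorem.
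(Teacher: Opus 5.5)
The overall strategy is the paper's. You use the conflict graph on $(k-1)$-faces, Tur\'an's theorem, Corollary~\ref{main cor} applied to a strongly independent family, and Macaulay bounds on $g_{k+1},\dots,g_s$ in terms of $g_{k+1}$. Your estimate $f_{i-1}\le\binom{k}{i}f_{k-1}$ is a fine substitute for the paper's appeal to Bj\"orner's monotonicity.

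\textbf{The gap.} It is the point you flagged and left open, and your fallback does not close it in case~1. There $d\ge 4k$ is unbounded, and the coefficients $\binom{d-j}{m-j}$ in $f_{m-1}=\sum_j\binom{d-j}{m-j}h_j$ grow with $d$. For example, $g_{k+1}$ enters $f_{2k-1}$ with coefficient $\sum_{j=k+1}^{2k}\binom{d-j}{2k-j}$, which is of order $d^{k-1}$. You absorb every $g_j$ with $k+1\le j\le 2k$ into the single term $g_{k+1}^{2k/(k+1)}$ using one common constant $C$. That $C$ is therefore at least of order $d^{k-1}$. Your ``otherwise'' branch then gives $g_{k+1}\ge (c/C)^{(k+1)/(3k+1)}f_{k-1}^{(2k+2)/(3k+1)}$, whose constant tends to $0$ as $d\to\infty$. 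This contradicts the requirement that $c_k$ depend only on $k$.

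Restricting the range of $d$ works only in case~2, where $3k+3\le d\le 4k$. The ``sufficiently many vertices'' hypothesis, as you use it, cannot help either, because the $d$-dependence sits in the multiplicative constant of the top-order term.

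\textbf{The missing observation.} The paper's ``for $f_0\gg 0$'' step rests on the fact that the top-order term is free of $d$. In case~1, $g_{2k}$ occurs only in $f_{2k-1}$, through $h_{2k}$, with coefficient exactly $1$. Every $d$-dependent coefficient multiplies a term of strictly lower order: either $f_{k-1}$, or some $g_j$ with $k+1\le j\le 2k-1$, which is $O_k\bigl(g_{k+1}^{j/(k+1)}\bigr)$. So you should keep the bound in the form
\[
f_1(H)\le A_k\,g_{k+1}^{2k/(k+1)}+B_{k,d}\Bigl(f_{k-1}+\sum_{j=k+1}^{2k-1}g_{k+1}^{j/(k+1)}\Bigr),
\]
with $A_k$ depending only on $k$. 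Combine this with $g_{k+1}\bigl(2f_1(H)+f_{k-1}\bigr)\ge f_{k-1}^2$ and split according to which term dominates.
\begin{enumerate}
\item If the $A_k$-term dominates, you get the claimed bound with a constant depending only on $k$.
\item If a $B_{k,d}$-term dominates, you get $g_{k+1}\ge c_{k,d}\,f_{k-1}^{e}$ with $e=1$ or $e=2(k+1)/(k+1+j)$ for some $j\le 2k-1$. Every such $e$ is strictly larger than $(2k+2)/(3k+1)$.
\end{enumerate}
In the second situation the bound beats the target once $f_{k-1}$, and hence $f_0$, exceeds a threshold depending on $d$. That is exactly the room ``sufficiently many vertices'' provides. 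With this refinement, and your bounded-range remark for case~2, the argument is complete.
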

	\begin{proof}
		We first treat the case $k\leq d/4$. Define the following graph $K$, which has $f_{k-1}(\Delta)$ vertices: each vertex corresponds to a $(k-1)$-face of $\Delta$, and two vertices corresponding to $(k-1)$-faces $\sigma$ and $\tau$  are connected by an edge if $\sigma\cup\tau$ is a face of $\Delta$. In this case, $\dim(\sigma\cup\tau)=2k-i-1$ for some $0\leq i\leq k-1$. Since every set of size $2k-i$ can be expressed as the union of two of its $k$-subsets in $\frac{1}{2}\binom{2k-i}{k}\binom{k}{i}$ ways, it follows that the number of edges of $K$ satisfies
		$$f_1(K)=\sum_{i=0}^{k-1} \frac{1}{2}\binom{2k-i}{k}\binom{k}{i}f_{2k-i-1}(\Delta) < \frac{1}{2}\binom{2k}{k}\binom{k}{\lfloor k/2\rfloor}\sum_{i=0}^{k-1} f_{2k-i-1}(\Delta).$$
		We conclude that the average vertex degree of $K$ is $\frac{2f_1(K)}{f_0(K)}<\frac{\binom{2k}{k}\binom{k}{\lfloor k/2\rfloor}\sum_{i=0}^{k-1} f_{2k-i-1}(\Delta)}{f_{k-1}(\Delta)}$. Hence, by T\'uran's theorem, the independence number $\alpha(K)$ of $K$ satisfies
		$$\alpha(K)> \frac{\big(f_{k-1}(\Delta)\big)^2}{\binom{2k}{k}\binom{k}{\lfloor k/2\rfloor}\sum_{i=0}^{k} f_{2k-i-1}(\Delta)}.$$
		
		Consider a generic embedding of $\Delta$ in $\R^d$. If $\tau\in \Delta$ is a $(k-1)$-face, then $\lk(\tau)\in S(d-2-2k, d-1-k)$. In particular, $g_{k+1}(\lk(\tau))\geq 1$. Let $m=\alpha(K)$ and let $\tau_1, \tau_2, \dots, \tau_m$ be $(k-1)$-faces of $\Delta$ representing a maximum independent set of $K$. Then for any $1\leq i\neq j\leq m$, we have $\tau_i\notin \st(\tau_j)$ because $\tau_i\cup \tau_j \notin\Delta$. Hence, by Corollary~\ref{main cor}, for each $1\leq i\leq m$, there exists an affine $(k+1)$-stress $\omega_i\in \Stress^a_{k+1}(\st(\tau_i))$ such that $\tau_i\in \supp(\omega_i)$, but $\tau_j\notin \supp(\omega_i)$ for all $j\neq i$. Consequently, the stresses $\omega_1, \dots, \omega_m$ are linearly independent. We conclude that $$g_{k+1}(\Delta)\geq m =\alpha(K)> \frac{\big(f_{k-1}(\Delta)\big)^2}{\binom{2k}{k}\binom{k}{\lfloor k/2\rfloor}\sum_{i=0}^{k} f_{2k-i-1}(\Delta)}. $$

		For the remainder of the proof, all the $g$- and $f$-numbers refer to those of $\Delta$. Write $g_{k+1}=\binom{x}{k+1}$ for some real number $x\geq k+1$. Assume that $g_{k+1}<c_k \cdot f_{k-1}^{\frac{2k+2}{3k+1}}$, where $c_k$ is a positive constant to be determined later. Then $$c_k\cdot f_{k-1}^{\frac{2k+2}{3k+1}}> g_{k+1}=\binom{x}{k+1}>\frac{(x-k)^{k+1}}{(k+1)!}.$$
		That is, $x-k<((k+1)!c_k)^{\frac{1}{k+1}}\cdot \big(f_{k-1}(\Delta)\big)^{\frac{2}{3k+1}}.$ Using the $g$-theorem, along with the fact that for $1\leq s\leq k$ and $x\geq k+1$, $x+s-1\leq(k+s)(x-k)$, we infer that for all $1\leq s\leq k$,
		$$g_{k+s}\leq \binom{x+s-1}{k+s}<\frac{(x+s-1)^{k+s}}{(k+s)!}\leq \frac{(k+s)^{k+s}(x-k)^{k+s}}{(k+s)!}\leq c_{k,s}\cdot f_{k-1}^{\frac{2(k+s)}{3k+1}},$$
		where $c_{k,s}=\frac{(k+s)^{k+s}}{(k+s)!}((k+1)!c_k)^{\frac{k+s}{k+1}}$.
		
		Since $\Delta\in S(d-2-2k,d-1)$, we have $g_{2k}\geq 1$. From the defining relations for the $g$-numbers, each $f_{2k-i-1}$ can be expressed as a linear combination of $g_{2k}, g_{2k-1}, \dots, g_{k+2}, g_{k+1}, f_{k-1}, \dots, f_0, 1$. Moreover, when $i=0$, the coefficient of $g_{2k}$ in this linear combination is $1$, while for $i>0$ this coefficient is zero.  By Bj\"orner's result \cite[Theorem 5]{Bjorner-93}, the first half of the $f$-numbers of $\Delta$ is weakly increasing. Thus, for $k\leq d/4$, $f_0\leq f_1\leq \dots \leq f_{k-1}$. Also the exponents $\{2(k+s)/(3k+1)\}_{s=1}^k$ form an increasing sequence, whose largest term $4k/(3k+1)>1$. Therefore, letting $c'_k=c_{k,k}$, we obtain that for $f_0\gg 0$,
		$$\sum_{i=0}^{k} f_{2k-i-1}< 2c'_k\cdot f_{k-1}^{\frac{4k}{3k+1}}.$$
		
		Putting everything together, we conclude that
		$$c_k\cdot f_{k-1}^{\frac{2k+2}{3k+1}}>g_{k+1}> \frac{f_{k-1}^2}{\binom{2k}{k}\binom{k}{\lfloor k/2\rfloor}\sum_{i=0}^{k} f_{2k-i-1}}
		>\frac{f_{k-1}^{\frac{2k+2}{3k+1}}}{2\binom{2k}{k}\binom{k}{\lfloor k/2\rfloor}\cdot c_k'}.$$ Choosing $c_k>0$ so that  $c_k\cdot c_k'\cdot 2\binom{2k}{k}\binom{k}{\lfloor k/2\rfloor}<1$ leads to a contradiction. Thus, for such a choice of $c_k$, $g_{k+1}\geq c_k\cdot f_{k-1}^{\frac{2k+2}{3k+1}}$.
		
		The treatment of the case $\frac{d}{4}\leq k \leq \frac{d}{3}-1$ is very similar. The only difference is that when $k\geq d/4$, each of $f_k,f_{k+1},\dots,f_{2k-1}$ can be written as a linear combination of $$g_{\lfloor d/2\rfloor}, \dots, g_{k+2},g_{k+1}, f_{k-1}, \dots, f_0, 1.$$ Since the inequalities $g_{k+s} \leq O\left((g_{k+1})^{(k+s)/(k+1)}\right)$ continue to hold for all $1\leq s\leq \lfloor d/2\rfloor-k$, and since $g_{\lfloor d/2\rfloor}>0$ for $\Delta\in S(d-2-2k,d-1)$, computations analogous to those in the previous case apply. These yield the bound stated in the theorem.
	\end{proof}

	\section{Level rings and counterexamples to conjectures on stresses}
	In this section, we switch gears and use algebraic tools to obtain additional relations among the $g$-numbers of spheres in $S(j, d-1)$.
	
	We begin by reviewing some basics on Gorenstein and level rings. Let $R=\R[x_1,\ldots,x_n]$, let $\mathbf{m}=(x_1, \dots, x_n)$ be the irrelevant ideal (that is, the maximal homogeneous ideal) of $R$, and let $N$ be a graded $R$-module. One example of such $N$ is $\R[\Delta]=R/I_\Delta$. In fact, all of the $R$-modules we consider  are quotients of $\R[\Delta]$.
	
	Define the {\em socle} of $N$ by $\Soc^R N=0:_N \mathbf{m}=\{y\in N: \mathbf{m}y=0\}$. If $N$ has Krull dimension $0$, then the largest $k$ for which $r_k(N)\neq 0$ is called the {\em socle degree of $N$}; we denote it by $s$. The integer vector $S(N)=(r_1(N), \dots, r_s(N))$ is called the {\em socle vector of $N$}. 
	
	If $N=\R\oplus N_1 \oplus \dots \oplus N_s$ (with $N_s\neq 0$) as an $\R$-vector space, then $(\Soc^R N)_s=N_s$, and hence the socle degree of $N$ is $s$. We say that $N$ is {\em level} if its socle vector $S(N)$ is of the form $(0,\dots, 0, a)$ for some $a\geq 1$, and {\em Gorenstein} if in addition $a=1$. A sequence is called a {\em level sequence} (resp.~\emph{Gorenstein sequence}) if it is the the Hilbert function of an Artinian level ring (resp.~Artinian Gorenstein ring) $N$; that is, it is of the form $(1, \dim_\R N_1, \dots, \dim_\R N_s)$. In particular, every level sequence is an $M$-sequence.
	
	Assume that $(\Delta, p)$ is a $(d-1)$-sphere with $n$ vertices and with an embedding $p: V(\Delta)\to \R^d$ that is either generic or natural (when $\Delta$ is polytopal). As in Section 2, let $\Theta=\Theta(p)$ be the l.s.o.p.~of $\R[\Delta]$ associated with $p$ and let $c=\sum_{v=1}^n x_v$. Recall that $\R[\Delta]/(\Theta)$ is Gorenstein. On the other hand, if we set $A(\Delta):= \R[\Delta]/(\Theta,c)$, then by the $g$-theorem, we have $\dim_\R A(\Delta)_k=g_k$ for $0\leq k\leq \lfloor d/2\rfloor$ and $\dim_\R A(\Delta)_k=0$  otherwise. So the socle degree of $A(\Delta)$ is at most $\lfloor d/2\rfloor$.

The relevance of level rings to spheres with no large missing faces is explained by the following result; see \cite[Proposition 3.2]{MNZ}.

\begin{proposition} \label{MNZ-Prop3.2}
Let $(\Delta, p)$ be a $(d-1)$-sphere with $n$ vertices and with a generic or natural embedding $p$ in $\R^d$. Let $\Theta=\Theta(p)$ be the l.s.o.p.~associated with $p$. Denote by $m_i$ the number of missing $i$-faces of $\Delta$. Then 
\begin{eqnarray} \nonumber \dim_\R \Soc^R A(\Delta)_k &=& m_{d-k} \quad\mbox{if } k<\lfloor(d-1)/2\rfloor, \mbox{ and}\\
	\label{eq:mid-dim}		\dim _\R\Soc^R A(\Delta)_k &\geq& m_{d-k} \quad\mbox{if } k=\lfloor(d-1)/2\rfloor.
\end{eqnarray}		
\end{proposition}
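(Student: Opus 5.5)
We need to prove Proposition \ref{MNZ-Prop3.2}: the socle of $A(\Delta)=\R[\Delta]/(\Theta,c)$ in degree $k$ has dimension $m_{d-k}$ for $k<\lfloor (d-1)/2\rfloor$, and at least $m_{d-k}$ for $k=\lfloor (d-1)/2\rfloor$.

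The plan is to use the Gorenstein property of $B:=\R[\Delta]/(\Theta)$, whose socle lies in degree $d$. Since $A=B/cB$, an element $y\in A_k$ is socle exactly when $\mathbf{m}y\subseteq cB$ after lifting. Passing to the Macaulay inverse system (stress spaces) is equivalent, but I will work with duality. The pairing $B_k\times B_{d-k}\to B_d\cong\R$ is perfect, and the annihilator of $cB_{k-1}$ is $(0:_B c)_{d-k}$. Hence
\[
A_k\cong \big((0:_Bc)_{d-k}\big)^*,
\]
and under this duality $\Soc(A)_k$ is dual to
\[
(0:_Bc)_{d-k}\big/\mathbf{m}\,(0:_Bc)_{d-k-1}.
\]
That is, it is dual to the minimal generators of the module $0:_Bc$ in degree $d-k$. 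So the task becomes: count minimal generators of $0:_B c$ in degree $d-k$.

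Next, identify $0:_Bc$ with generators of $I_\Delta$. Hard Lefschetz makes multiplication $c:B_{j-1}\to B_j$ injective for $j\le\lceil d/2\rceil$ and surjective above. For $j=d-k>\lceil d/2\rceil$ I will use the short exact sequence $0\to (0:_Bc)\to B(-1)\xrightarrow{c} B\to A\to 0$ in each degree. The intended comparison is with $\R[X]/(\Theta)$, of which $B$ is the quotient by the image of $I_\Delta$. Write $\tilde I$ for the image of $I_\Delta$ in $\R[X]/(\Theta)$. Since $c$ is a nonzerodivisor on a polynomial ring, the snake lemma gives
\[
(0:_Bc)_j \;\cong\; \ker\big(\tilde I_j/c\tilde I_{j-1}\to (\R[X]/(\Theta,c))_j\big),
\]
up to correction terms coming from the Koszul relations of $\Theta$.

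For degrees in the range $k<\lfloor (d-1)/2\rfloor$, the mixed Tor contributions vanish. This uses that $\R[\Delta]$ is Cohen--Macaulay, so $\Theta$ is regular, together with the degree bound on syzygies of $I_\Delta$: the Betti numbers of $\R[\Delta]$ live in degrees at most $d$. As a result, the minimal generators of $0:_Bc$ in degree $d-k$ biject with the minimal generators of $I_\Delta$ of degree $d-k$, i.e.\ with missing $(d-k-1)$-faces.

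This needs an index check. A missing $(d-k)$-face gives a monomial of degree $d-k+1$, so I must recheck whether the correct shift produces $m_{d-k}$. I expect the $c$-shift to contribute the extra $+1$: the generator of $0:_Bc$ arising from a missing face $\sigma$ should be $x_\sigma/x_v$-type elements in degree $|\sigma|-1$. Checking this shift carefully is the first concrete computation.

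The main obstacle is making the Tor-vanishing/degree argument precise. This is where $k<\lfloor (d-1)/2\rfloor$ enters: it ensures that Alexander duality does not make the Betti numbers of $\R[\Delta]$ in degree $(d-k)$, coming from the second syzygy module, interfere. At the boundary value $k=\lfloor (d-1)/2\rfloor$ the interference is possible. There the argument only gives an injection from missing faces into socle generators, which yields the inequality \eqref{eq:mid-dim}. I would first verify the whole scheme on $\partial\sigma^i*\partial\sigma^{d-i}$, where $A$ and $m_j$ are explicit.
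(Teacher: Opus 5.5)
The paper does not prove this statement; it quotes it from \cite{MNZ}, so your outline has to stand on its own. Your first step is correct. Since $B=\R[\Delta]/(\Theta)$ is a Poincar\'e duality algebra with socle in degree $d$, one has $A_k\cong((0:_Bc)_{d-k})^*$, and $\Soc(A)_k$ is dual to $(0:_Bc)_{d-k}/\mathbf{m}(0:_Bc)_{d-k-1}$. The gap is everything after that. The step that actually produces $m_{d-k}$ and the threshold $\lfloor (d-1)/2\rfloor$ is asserted rather than proved, and the mechanism you point to is the wrong one.

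First, the degree shift. Apply the snake lemma to multiplication by $c$ on $0\to\tilde I\to S'\to B\to 0$, where $S'=\R[X]/(\Theta)$. The result is exact with no Koszul correction terms, because $S'$ is a polynomial ring and $c$ is a nonzerodivisor on it. The connecting map raises degree by one, giving $(0:_Bc)(-1)\cong K:=\ker(J\to S)$ with $S=S'/cS'$ and $J=\tilde I/c\tilde I$. So generators of $0:_Bc$ in degree $d-k$ correspond to generators of $K$ in degree $d-k+1$. This is exactly why missing $(d-k)$-faces appear. Your displayed formula (degree $j$ on both sides) and the resulting ``bijection with missing $(d-k-1)$-faces'' are off by one.

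Second, the inclusion $K\subseteq J$ says nothing by itself about the minimal generators of $K$. You need the exact sequence $0\to K\to J\to\bar I\to 0$, where $\bar I$ is the image of $J$ in $S$, i.e.\ the ideal with $S/\bar I=A$.

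Third, the vanishing that makes the comparison work is not a degree bound on the syzygies of $\R[\Delta]$; these are not confined to degrees $\le d$, since the last Betti number sits in degree $n$. Alexander duality plays no role either. Applying $-\otimes_S\R$ to the sequence above gives, in degree $t=d-k+1$,
\[\mathrm{Tor}^S_2(A,\R)_t\to (K/\mathbf{m}K)_t\to (J/\mathbf{m}J)_t\to \mathrm{Tor}^S_1(A,\R)_t\to 0,\]
where $(K/\mathbf{m}K)_t\cong\big((0:_Bc)/\mathbf{m}(0:_Bc)\big)_{d-k}$ is dual to $\Soc(A)_k$.
\begin{itemize}
\item Cohen--Macaulayness is what identifies $J/\mathbf{m}J=\tilde I/\mathbf{m}\tilde I$ with $I_\Delta/\mathbf{m}I_\Delta$, since $I_\Delta\cap(\Theta)=\Theta I_\Delta$. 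Hence $\dim(J/\mathbf{m}J)_t=m_{d-k}$.
\item The outer terms are controlled by the Hard Lefschetz input you already quoted: surjectivity of $c$ in high degrees, i.e.\ $A_a=0$ for $a>\lfloor d/2\rfloor$. By the Koszul complex, $\mathrm{Tor}^S_i(A,\R)_{i+a}=0$ for all such $a$.
\item Thus $\mathrm{Tor}^S_1(A,\R)_t=0$ as soon as $d-k>\lfloor d/2\rfloor$, i.e.\ $k\le\lfloor(d-1)/2\rfloor$.
\item Likewise $\mathrm{Tor}^S_2(A,\R)_t=0$ as soon as $d-k\ge\lfloor d/2\rfloor+2$, i.e.\ $k<\lfloor(d-1)/2\rfloor$.
\end{itemize}
This gives an isomorphism $(K/\mathbf{m}K)_t\cong(J/\mathbf{m}J)_t$ in the first range, and only a surjection at $k=\lfloor(d-1)/2\rfloor$. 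That is precisely the equality/inequality split in the proposition.

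With these points supplied, your duality-plus-snake-lemma route becomes a complete and fairly elementary proof. As written, it stops exactly where the real work begins.
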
 	

An immediate, albeit rather surprising, consequence of this result and the definition of level rings is the following.

\begin{corollary}\label{cor: level ring}
Let $\Delta$ be a $(d-1)$-sphere, 
let $u$ be the largest integer such that $\Delta \in S(d-u, d-1)$, and set
 $\tilde{u}:=\min\{u, \lfloor(d-1)/2\rfloor\}$. Then, under the assumptions and notation of Proposition \ref{MNZ-Prop3.2}, the graded algebra $\oplus_{i=0}^{\tilde{u}} A(\Delta)_i$ is a level ring of socle degree $\tilde{u}$. In particular, $(g_0(\Delta),g_1(\Delta), \dots, g_{\tilde{u}}(\Delta))$ is a level sequence.
\end{corollary}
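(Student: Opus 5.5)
We will show that the truncation $B:=\oplus_{i=0}^{\tilde u}A(\Delta)_i$, that is, $A(\Delta)/\mathbf{m}^{\tilde u+1}A(\Delta)$, has socle concentrated in degree $\tilde u$. Since $A(\Delta)$ is a standard graded quotient of $R$, $B$ is also a standard graded Artinian algebra. Multiplication by $\mathbf{m}$ from $B_k$ to $B_{k+1}$ agrees with that in $A(\Delta)$ for $k<\tilde u$, and $B_{\tilde u+1}=0$. Hence
\[(\Soc^R B)_k=(\Soc^R A(\Delta))_k \ \text{ for } k<\tilde u, \qquad (\Soc^R B)_{\tilde u}=B_{\tilde u}=A(\Delta)_{\tilde u}.\]
So it suffices to prove that $(\Soc^R A(\Delta))_k=0$ for all $1\le k<\tilde u$ (the degree $0$ part is trivially zero when $\tilde u\ge 1$), and that $B_{\tilde u}\neq 0$.

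For the vanishing, we apply Proposition \ref{MNZ-Prop3.2}. Since $k<\tilde u\le\lfloor (d-1)/2\rfloor$, we are in the equality case, which gives $\dim_\R(\Soc^R A(\Delta))_k=m_{d-k}$. Now $k<\tilde u\le u$ gives $d-k>d-u$. Because $\Delta\in S(d-u,d-1)$, it has no missing faces of dimension larger than $d-u$, so $m_{d-k}=0$. This is the key step: we must use the strict inequality $k<\lfloor(d-1)/2\rfloor$, and this is exactly why $\tilde u$ is capped at $\lfloor (d-1)/2\rfloor$, where Proposition \ref{MNZ-Prop3.2} only gives an inequality.

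For nonvanishing in the top degree, $\dim B_{\tilde u}=g_{\tilde u}(\Delta)$. Since $\tilde u\le\lfloor d/2\rfloor$ and $\Delta\in S(d-u,d-1)\subseteq S(d-\tilde u,d-1)$, Corollary \ref{S(d-i, d-1) implies g_i>0} gives $g_{\tilde u}\ge 1$. (The degenerate case $\tilde u=0$ is trivial.) Thus $B$ is level of socle degree $\tilde u$. Its Hilbert function is $(g_0,\dots,g_{\tilde u})$, since $\dim A(\Delta)_i=g_i$ for $i\le\lfloor d/2\rfloor$, so this sequence is a level sequence.

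The main obstacle is checking the passage from $A(\Delta)$ to its truncation. Socle elements of $A(\Delta)$ in degrees $k<\tilde u$ coincide with those of $B$ because $B_{k+1}=A(\Delta)_{k+1}$. In degree $\tilde u$, every element becomes socle, which is harmless since levelness only requires that the socle be concentrated there.
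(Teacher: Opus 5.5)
Your proposal is correct and follows the paper's argument. Proposition \ref{MNZ-Prop3.2}, applied in the range $k<\tilde u\le\lfloor(d-1)/2\rfloor$, gives $\dim_\R(\Soc^R A(\Delta))_k=m_{d-k}=0$ there, and truncating at degree $\tilde u$ turns the top graded piece into socle. Your separate appeal to the Generalized Lower Bound corollary to get $g_{\tilde u}\geq 1$ is valid but not needed: the top nonzero graded piece of the Artinian algebra $A(\Delta)$ always lies in its socle, so vanishing of the socle below degree $\tilde u$ already forces $A(\Delta)_{\tilde u}\neq 0$.
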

\begin{proof}
		Since $\Delta \in S(d-u, d-1)$, Proposition \ref{MNZ-Prop3.2} implies that $A(\Delta)$ has vanishing socle in all degrees $<\tilde{u}$. Therefore, the truncated algebra $\oplus_{i=0}^{\tilde{u}} A(\Delta)_i$ is a level ring of socle degree $\tilde{u}$. 
\end{proof}

Level sequences have been extensively studied, and, as the next two theorems demonstrate, levelness imposes far stronger restrictions on the entries of a sequence than merely being an $M$-sequence. The following theorem summarizes several results from \cite[Theorem 2.2]{GHMS} and \cite[Section III.3]{Stanley96}.
	\begin{theorem}\label{thm: level sequence}
		Let $\ell=(1, \ell_1, \dots, \ell_s)$ be a level sequence. Then:
		\begin{enumerate}
			\item For every $1\leq s' \leq s$, the truncated sequence $(1, \ell_1, \dots, \ell_{s'})$ is also level.
			\item For all $i, j \geq 1$ with $i+j\leq s$, one has $\ell_i\leq \ell_j\ell_{i+j}$.
			\item The reverse sequence $(\ell_s, \dots, \ell_1, 1)$ is the sum of a Gorenstein sequence $(1, b_1, \dots, b_{s-1}, 1)$ and of $\ell_s-1$ additional $M$-sequences (each ending in a zero). 
			In particular, if $\ell_s=1$, then $\ell$ is Gorenstein, and hence symmetric.
		\end{enumerate}
	\end{theorem}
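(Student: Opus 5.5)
This theorem is a summary of known results from commutative algebra, so I would prove each part by working with an Artinian level algebra $N=R/J$ with Hilbert function $\ell$ and socle concentrated in degree $s$. The main tool is Macaulay inverse systems, which give a dual description of level rings. Namely, $N$ is level of socle degree $s$ if and only if its inverse system $J^{\perp}$ is generated, as a module under the contraction (differentiation) action, by $\ell_s$ forms $F_1,\dots,F_{\ell_s}$ of degree $s$. Moreover, $\ell_i$ equals the dimension of the span of all $i$-th order partial derivatives of the $F_k$. Part 3 then reduces to standard arguments on inverse systems, while Part 2 needs a genuine socle/Gorenstein argument.

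\emph{Part 1.} Truncating at $s'$ corresponds to replacing the generators $F_k$ by all their $(s-s')$-th partial derivatives. Those derivatives generate the degree-$\le s'$ part of the inverse system, and they all have degree $s'$. Equivalently, $N/N_{>s'}$ is level: any socle element of degree $<s'$ would already be a socle element of $N$, since multiplying it by $\mathbf m$ stays in degrees $\le s'$, where the two rings agree. This part is immediate.

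\emph{Part 3.} I would split the inverse system. The principal submodule generated by $F_1$ is the inverse system of a Gorenstein algebra with some $h$-vector $(1,b_1,\dots,b_{s-1},1)$, which is symmetric. For the remaining generators, I would order them and consider successive quotients of the module generated by $F_1,\dots,F_k$ modulo the one generated by $F_1,\dots,F_{k-1}$. Each quotient is cyclic. Read in reverse degree order, its Hilbert function is the Hilbert function of a cyclic module over the Koszul-dual action, hence an $M$-sequence. It ends in zero at the bottom because degree $0$ is already accounted for by $F_1$. Summing these gives the stated decomposition, and if $\ell_s=1$ only the Gorenstein summand remains.

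\emph{Part 2.} For the inequality $\ell_i\le \ell_j\ell_{i+j}$, I would use the multiplication pairing $N_j\otimes N_i\to N_{i+j}$. Levelness implies that no nonzero element of $N_i$ with $i<s$ is killed by all of $N_{s-i}$. Since $i+j\le s$, I would like to also show that no nonzero element of $N_i$ is killed by all of $N_j$. One iterates the socle condition: if $yN_j=0$, then $yN_{j'}=0$ for every $j'\ge j$, in particular for $j'=s-i$. That contradicts the perfectness of the pairing $N_i\times N_{s-i}\to N_s$ induced by levelness (injectivity of $N_i\to \mathrm{Hom}(N_{s-i},N_s)$). Consequently $N_i\hookrightarrow \mathrm{Hom}(N_j,N_{i+j})$, which has dimension $\ell_j\ell_{i+j}$.

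The main obstacle is Part 3, specifically showing that each cyclic successive quotient, reversed, has Hilbert function an $M$-sequence; this is exactly the content of Stanley's argument in Section III.3, which I would cite rather than reprove.
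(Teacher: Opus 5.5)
Your proposal is correct. The paper gives no proof of its own and simply cites \cite[Theorem 2.2]{GHMS} and \cite[Section III.3]{Stanley96}, and your three arguments are essentially the standard proofs behind those citations: the socle computation for $N/N_{>s'}$, Stanley's injection $N_i\hookrightarrow \mathrm{Hom}(N_j,N_{i+j})$, and the filtration $R\circ F_1\subseteq R\circ F_1+R\circ F_2\subseteq\cdots$ of the inverse system.

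Two minor remarks. First, calling $N_i\times N_{s-i}\to N_s$ a perfect pairing is inaccurate when $\ell_s>1$. The injectivity of $N_i\to\mathrm{Hom}(N_{s-i},N_s)$, which is what you actually use, does hold: every nonzero homogeneous element has a nonzero multiple in the socle, and the socle sits in degree $s$. Second, the step you flag as the main obstacle in Part 3 is immediate. Each cyclic quotient, with its grading reversed, is isomorphic to $R/\mathrm{Ann}(\bar F_k)$, so Macaulay's theorem applies directly.
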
  
	
	In view of part 3 of this theorem, it is natural to look for structural properties satisfied by sums of $M$-sequences. Several such properties are collected in the following theorem; see \cite[Proposition 4.3]{LS}. This theorem follows from a stronger result \cite{BE,Hulett}, which extends Macaulay's theorem to graded modules.
	
	\begin{theorem}\label{thm: sum of M sequences}
		Let $\ell=(\ell_0, \dots, \ell_s)$ be a sequence of nonnegative integers. The following statements are equivalent:
		\begin{enumerate}
			\item The sequence $\ell$ is a sum of $M$-sequences.
			\item Either $\ell_i=0$ for all $i$, or $\ell_0\geq 1$ and the sequence $(1, \ell_1, \dots, \ell_s)$ is an $M$-sequence.
			\item The sequence $\ell$ is the Hilbert function of a graded module over some polynomial ring $\R[x_1, \dots, x_m]$ generated in degree $0$.
		\end{enumerate}
	\end{theorem}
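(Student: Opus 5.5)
The plan is to prove the cycle $(1)\Rightarrow(3)\Rightarrow(1)$, and then $(1)\Leftrightarrow(2)$ separately. Throughout, $M$-sequences have $a_0=1$; the sequence in (1) is allowed to be identically zero, as the empty sum.

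\emph{$(1)\Rightarrow(3)$.} Each $M$-sequence $(1,a_1,\dots,a_s)$ is, by Macaulay's theorem, the Hilbert function of a standard graded algebra $R_j/I_j$. We may pad with variables so that all of these are quotients of one polynomial ring $R=\R[x_1,\dots,x_m]$. The direct sum $\bigoplus_j R/I_j$ is then a graded $R$-module generated in degree $0$. Hilbert functions add, so this module has Hilbert function $\ell$.

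\emph{$(3)\Rightarrow(1)$.} Let $N$ be generated by homogeneous elements $y_1,\dots,y_k$ of degree $0$. Set $N_j=Ry_1+\dots+Ry_j$. Each quotient $N_j/N_{j-1}$ is cyclic and generated in degree $0$, so it is either $0$ or isomorphic to $(R/J)$ for some homogeneous ideal $J$. Its Hilbert function is therefore zero or, by Macaulay's theorem, an $M$-sequence. Additivity of Hilbert functions along the filtration expresses $\ell$ as a sum of $M$-sequences, where zero summands are simply dropped.

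\emph{$(2)\Rightarrow(1)$.} This direction is immediate. If $\ell\ne 0$, write
$$\ell=(1,\ell_1,\dots,\ell_s)+(\ell_0-1)\cdot(1,0,\dots,0),$$
and note that $(1,0,\dots,0)$ is an $M$-sequence.

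\emph{$(1)\Rightarrow(2)$.} Write $\ell=\sum_{j=1}^k a^{(j)}$, where each $a^{(j)}$ is an $M$-sequence. Then $\ell_0=k\ge 1$, and $\ell_1$ is unconstrained in the definition of an $M$-sequence. For $i\ge 1$ we have
$$\ell_{i+1}=\sum_j a^{(j)}_{i+1}\le \sum_j \big(a^{(j)}_i\big)^{\langle i\rangle}.$$
Hence it suffices to prove the superadditivity $a^{\langle i\rangle}+b^{\langle i\rangle}\le (a+b)^{\langle i\rangle}$ for nonnegative integers $a,b$ and $i\ge1$, and then induct on the number of summands.

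This superadditivity is the main obstacle. I would first try to deduce it from the combinatorial meaning of $n^{\langle i\rangle}$. In a polynomial ring with enough variables, $n^{\langle i\rangle}$ is the number of degree-$(i+1)$ monomials not in the ideal generated by the lex-last $\binom{x_1+i-1}{i}-n$ degree-$i$ monomials. Equivalently, by Macaulay's theorem, it is the maximum of $\dim (R/J)_{i+1}$ over ideals $J$ with $\dim(R/J)_i=n$.

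Given lex-segment order ideals realizing $a$ and $b$, place them in disjoint sets of variables $x$ and $y$, and combine them into a single order ideal of monomials of degree $\le i+1$. This order ideal contains the $a+b$ chosen degree-$i$ monomials, together with the $a^{\langle i\rangle}+b^{\langle i\rangle}$ degree-$(i+1)$ monomials in the separate variable sets, with no mixed monomials. It is closed under taking divisors, so it defines a quotient algebra with $h_i=a+b$ and $h_{i+1}\ge a^{\langle i\rangle}+b^{\langle i\rangle}$. Macaulay's bound then gives $a^{\langle i\rangle}+b^{\langle i\rangle}\le (a+b)^{\langle i\rangle}$.

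The point to check carefully is that the union is indeed an order ideal, and in particular that the degree-$1$ and lower parts cause no trouble. In lower degrees we simply take all monomials in the relevant variables. If this bookkeeping fails, the fallback is a direct induction on $i$ using the binomial expansion $n=\binom{n_i}{i}+\dots$, which is the standard route taken in the literature.
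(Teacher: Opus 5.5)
Your proof is correct, but it takes a different and more elementary route than the paper. The paper gives no argument of its own: it cites \cite[Proposition 4.3]{LS} and notes that the statement follows from the extension of Macaulay's theorem to graded modules due to Blancafort--Elias and Hulett \cite{BE,Hulett}. Along that route, the easy implications $(2)\Rightarrow(1)\Rightarrow(3)$ are the same constructions you give, and the substantive implication $(3)\Rightarrow(2)$ is read off directly from the module version of Macaulay's growth bound.

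You never leave the world of cyclic modules. The filtration $N_j=Ry_1+\dots+Ry_j$ turns $(3)\Rightarrow(1)$ into the classical Macaulay theorem for the quotients $R/J$. The implication $(1)\Rightarrow(2)$ then reduces to the superadditivity $a^{\langle i\rangle}+b^{\langle i\rangle}\le (a+b)^{\langle i\rangle}$, which your disjoint-variables construction proves cleanly.

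The bookkeeping you worried about does go through. The two order ideals meet only in the monomial $1$, and divisors of pure $x$- (resp.\ $y$-) monomials are again pure, so the union is closed under divisibility. The complementary monomials therefore span an ideal $J$, containing all mixed monomials, with $\dim_\R(R/J)_0=1$, $\dim_\R(R/J)_i=a+b$ and $\dim_\R(R/J)_{i+1}=a^{\langle i\rangle}+b^{\langle i\rangle}$. No fallback is needed.

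Your approach buys self-containment: it uses only Macaulay's theorem for algebras together with its sharpness. Sharpness is needed both in $(1)\Rightarrow(3)$ and to obtain order ideals attaining $a^{\langle i\rangle}$. The cited approach buys a genuinely stronger statement about Hilbert functions of graded modules, of which this theorem is a quick consequence.

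One small slip: the extremal ideal is generated by the lex-\emph{first} (largest) degree-$i$ monomials, so that the surviving monomials form a lex-last segment. Generating by the lex-smallest ones does not attain the bound in general. In $\R[x_1,x_2,x_3]$ with $x_1>x_2>x_3$, the ideal $(x_2^2,x_2x_3,x_3^2)$ leaves $3$ quadrics but only $3$ cubics, whereas $3^{\langle 2\rangle}=4$, attained by $(x_1^2,x_1x_2,x_1x_3)$. Your argument only needs some monomial order ideal attaining Macaulay's bound, so this does not affect the proof.
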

	
Putting Corollary \ref{cor: level ring} and Theorems \ref{thm: level sequence} and \ref{thm: sum of M sequences} together yields the following restrictions on the $g$-numbers of spheres without large missing faces. These relations constitute the main result of this section.
	
\begin{corollary}\label{thm: level ring}
Let $\Delta$ be a $(d-1)$-sphere, let $u$ be the largest integer such that $\Delta \in S(d-u, d-1)$, and set $\tilde{u}:=\min\{u, \lfloor(d-1)/2\rfloor\}$. Then the sequence $(g_0(\Delta),g_1(\Delta), \dots, g_{\tilde{u}}(\Delta))$ is a level sequence. In particular,
		\begin{enumerate}
			\item For all $i, j \geq 1$ with $i+j\leq \tilde{u}$, one has $g_i\leq g_j g_{i+j}$.
			\item Both $(1,g_1,\dots, g_{\lfloor d/2\rfloor})$ and $(1, g_{\tilde{u}-1}, \dots, g_{1}, 1)$ are $M$-sequences.
		\end{enumerate} 
\end{corollary}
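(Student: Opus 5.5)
The plan is to assemble the three preceding results in order. First, Corollary \ref{cor: level ring} gives directly that $\oplus_{i=0}^{\tilde u}A(\Delta)_i$ is a level ring of socle degree $\tilde u$. Since $\tilde u\le\lfloor (d-1)/2\rfloor\le\lfloor d/2\rfloor$, its Hilbert function is $(g_0,g_1,\dots,g_{\tilde u})$, so this sequence is level. One small point needs checking: the socle degree must really be $\tilde u$, which requires $g_{\tilde u}\ge 1$. This follows from Corollary \ref{S(d-i, d-1) implies g_i>0}, because $\Delta\in S(d-u,d-1)\subseteq S(d-\tilde u,d-1)$ and $\tilde u\le\lfloor d/2\rfloor$. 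If $\tilde u=0$, everything is vacuous.

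Part 1 follows at once from Theorem \ref{thm: level sequence}(2), applied with $\ell_i=g_i$ and $s=\tilde u$.

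For part 2, the first sequence $(1,g_1,\dots,g_{\lfloor d/2\rfloor})$ is an $M$-sequence by the $g$-theorem, since it is the Hilbert function of $A(\Delta)$; no levelness is needed for this. For the second sequence, apply Theorem \ref{thm: level sequence}(3). The reversed sequence $(g_{\tilde u},\dots,g_1,1)$ is a sum of a Gorenstein sequence and $g_{\tilde u}-1$ further $M$-sequences, so it is a sum of $M$-sequences. Its first entry satisfies $g_{\tilde u}\ge1$. Theorem \ref{thm: sum of M sequences}, direction $(1)\Rightarrow(2)$, then shows that $(1,g_{\tilde u-1},\dots,g_1,1)$ is an $M$-sequence.

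The only real subtlety is the indexing in this last step. The reversed sequence has length $\tilde u+1$, with entries $\ell_0=g_{\tilde u}$, $\ell_1=g_{\tilde u-1}$, and so on, up to $\ell_{\tilde u}=g_0=1$. Replacing $\ell_0$ by $1$ produces exactly $(1,g_{\tilde u-1},\dots,g_1,1)$. I also need to confirm that a Gorenstein sequence is itself an $M$-sequence, which is clear because it is the Hilbert function of a standard graded algebra. With that checked, the argument is complete.
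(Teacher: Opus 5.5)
Your proposal is correct and follows the same route as the paper. The paper obtains this statement by combining Corollary~\ref{cor: level ring} with Theorem~\ref{thm: level sequence}(2),(3) and the implication (1)$\Rightarrow$(2) of Theorem~\ref{thm: sum of M sequences}; the first sequence in part~2 is just the $g$-theorem. Your extra check that $g_{\tilde u}\ge 1$ (via Corollary~\ref{S(d-i, d-1) implies g_i>0}) is a correct detail that the paper leaves implicit. It also follows automatically from the vanishing of the socle in degrees below $\tilde u$ when $\tilde u\ge 1$.
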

	
The following special case of Corollary \ref{thm: level ring} deserves separate mention:
	\begin{corollary}\label{cor1}
		Let $\Delta \in S(u+1, 2u)$. Then  $g(\Delta)=(g_0(\Delta),g_1(\Delta), \dots, g_{u}(\Delta))$ is a level sequence. In particular, if $g_u(\Delta)=1$, then the $g$-vector of $\Delta$ is a Gorenstein sequence and hence symmetric.
	\end{corollary}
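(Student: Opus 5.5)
This is a direct specialization of Corollary \ref{thm: level ring}, so the plan is to compute the parameter $\tilde{u}$ and then invoke Theorem \ref{thm: level sequence}(3).

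First, set $d=2u+1$, so that $\Delta$ is a $(d-1)$-sphere. The hypothesis $\Delta\in S(u+1,2u)$ means every missing face has dimension at most $u+1=d-u$. Hence the largest integer $u'$ with $\Delta\in S(d-u',d-1)$ satisfies $u'\geq u$. On the other hand, $\lfloor (d-1)/2\rfloor=u$. It follows that
$$\tilde{u}=\min\{u',\lfloor (d-1)/2\rfloor\}=u.$$
(If $u'$ happens to be larger, the minimum still equals $u$, which is all we need.) Corollary \ref{thm: level ring} then says that $(g_0,\dots,g_u)$ is a level sequence. Since $d$ is odd, $\lceil d/2\rceil=u+1$ and $g_{u+1}=0$. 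Thus $(g_0,\dots,g_u)$ is the entire nontrivial $g$-vector, which gives the first assertion.

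For the second assertion, suppose $g_u=1$. By Theorem \ref{thm: level sequence}(3), a level sequence whose last entry equals $1$ is Gorenstein. Concretely, the reverse sequence decomposes as a Gorenstein sequence plus $g_u-1=0$ additional $M$-sequences, so it is itself Gorenstein. Gorenstein sequences are symmetric because the Artinian Gorenstein algebra realizing them satisfies Poincaré duality. Therefore $g_i=g_{u-i}$ for all $0\le i\le u$.

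The only step needing any care is the identification $\tilde{u}=u$, and in particular checking that $\lfloor(d-1)/2\rfloor$ is not smaller than $u$ here. This holds because $d-1=2u$. There is no substantial obstacle: the real content lies in Proposition \ref{MNZ-Prop3.2} and the results on level sequences quoted above.
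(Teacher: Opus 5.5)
Your proposal is correct and follows the paper's own route: Corollary~\ref{cor1} is presented there as the special case $d=2u+1$ of Corollary~\ref{thm: level ring}. As you note, $\tilde u=\min\{u',u\}=u$ there, so the whole $g$-vector is level, and the Gorenstein and symmetry claim for $g_u=1$ comes from Theorem~\ref{thm: level sequence}(3), exactly as you argue (equivalently, when $g_u=1$ the socle of $A(\Delta)$ is one-dimensional and concentrated in degree $u$).
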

	
	In light of Corollary \ref{cor1}, one may wonder whether the $g$-vectors of spheres in $S(u, 2u-1)$ are also level. To investigate this question, recall the following conjecture on the structure of the spaces of affine stresses, proposed in \cite[Conjecture 3.6]{NZ-Aff-Reconstr}. 
	\begin{conjecture}\label{conj: affine reconstruction}
		Assume $u\leq d/2$. Let $(\Delta, p)$ be a $(d-1)$-sphere in $S(d-u,d-1)$ with a generic or natural embedding $p$ in $\R^d$. Then for all $1\leq j<i\leq u$,
		\begin{equation}\label{eq: u-1-stresses from u-stresses}
			\Stress^a_{j}(\Delta,p)=\{\partial_{\mu}\omega : \omega\in \Stress^a_i(\Delta,p), \mu\in R_{i-j}\}.
		\end{equation}
	\end{conjecture}
	
	This conjecture was proved in \cite[Theorem 3.1]{MNZ} for all $u\leq \lceil\frac{d}{2}\rceil-1$. The case $u=d/2$ was left open. More precisely, when $d=2u$, \cite[Theorem 3.1]{MNZ} established that \eqref{eq: u-1-stresses from u-stresses} holds for all $1\leq j<i\leq u-1$; moreover, it showed that the validity of \eqref{eq: u-1-stresses from u-stresses} for $j=u-1$, $i=u$ is equivalent to the inequality of \eqref{eq:mid-dim} holding as equality. In other words, when $d=2u$, 
	$$\Stress^a_{u-1}(\Delta,p)=\{\partial_{x_v}\omega : \omega\in \Stress^a_u(\Delta,p), v\in[n]\} \quad \mbox{if and only if} \quad \dim_\R \Soc^R A(\Delta)_{u-1} = m_{2u-(u-1)}=0.$$ That is, Conjecture \ref{conj: affine reconstruction} holds for $d=2u$ and $\Delta\in S(u,2u-1)$ if and only if $A(\Delta)$ is a level ring. 
	
	Our next task in this section is to construct, for all $u\geq 3$, a sphere in $S(u,2u-1)$ whose $g$-vector is not level, thereby showing that Conjecture \ref{conj: affine reconstruction} is false when $d=2u\geq 6$. Meanwhile, it is worth noting that Conjecture \ref{conj: affine reconstruction} does hold when $u=2$,  $\Delta\in S(2,3)$, and $p$ is generic; this was proved in \cite[Theorem 8.4]{CJT}. The case where $p$ is natural remains open. We also mention that Conjecture \ref{conj: affine reconstruction} holds when $d=2u$ and $\Delta$ is a flag PL $(d-1)$-sphere; see \cite[Theorem 6.3]{NZ-Aff-Reconstr}. In particular, $A(\Delta)$ is level in these cases.

	\begin{example}\label{Example: support of stress}
		Let $u\geq 3k\geq 3$, and consider the sphere $$K(u-k, 2u-1)=\partial \sigma^{u-k}*\partial \sigma^{u-k}*\partial \sigma^{2k}\in S(u-k, 2u-1).$$ Since $K(u-k, 2u-1)$ is a join of boundaries of simplices, we have
		$$h(K(u-k, 2u-1), t)=h(\partial \sigma^{u-k}, t)^2h(\partial \sigma^{2k},t)=(1+t+\dots+t^{u-k})^2(1+t+\dots+t^{2k}).$$
	When $j\leq 2k$, the coefficient $h_j (K(u-k, 2u-1))$ counts the number of weak compositions of $j$ into three parts. Consequently,  $g_j(K(u-k, 2u-1))=\binom{j+2}{2}-\binom{j+1}{2}=j+1$. Similarly, when $2k\leq  j\leq u-k$, the value $h_j (K(u-k, 2u-1))$ is the sum, over $0\leq \ell\leq 2k$, of the numbers of weak compositions of
	$j - \ell$ into two parts. Thus, in this range, $h_j(K(u-k, 2u-1)) = \sum_{\ell=0}^{2k} (j-\ell+1)$, and hence $g_j(K(u-k, 2u-1)) =\sum_{\ell=0}^{2k} 1=2k+1$ for all $2k+1\leq j\leq u-k$.
		Finally, when $u-k \leq j\leq u$,
		$h_j(K(u-k, 2u-1)) = \sum_{\ell=0}^{2k} h_{j-\ell}(\partial \sigma^{u-k}*\partial \sigma^{u-k})$. Therefore, for $u-k<j\leq u$, $$g_j(K(u-k, 2u-1))=(h_j-h_{j-2k-1})(\partial \sigma^{u-k}*\partial \sigma^{u-k})=(2(u-k)+1-j)-(j-2k)=2u+1-j.$$
		To summarize, $$g_j(K(u-k, 2u-1))=\begin{cases}
			j+1 & \mbox{if }0\leq j\leq 2k\\
			2k+1 & \mbox{if }2k+1\leq j\leq u-k\\
			2u+1-2j & \mbox{if }u-k+1 \leq j\leq u
		\end{cases}.$$
		\end{example}
	
	\begin{corollary}\label{counterexample1}
		Conjecture \ref{conj: affine reconstruction} does not hold when $d=2u\geq 6$: for all $u\geq 3$, there exists a polytopal sphere $\Delta\in S(u,2u-1)$ such that, regardless of whether $p$ is generic or natural, the ring $A(\Delta)$ is not level.
	\end{corollary}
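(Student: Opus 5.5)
The plan is to take the join of simplex boundaries from Example~\ref{Example: support of stress} with $k=1$, namely $\Delta:=K(u-1,2u-1)=\partial\sigma^{u-1}*\partial\sigma^{u-1}*\partial\sigma^{2}$, where $u\geq 3$. We then show that its $g$-vector cannot be the Hilbert function of a level ring.

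First I will check the hypotheses. A join of boundaries of simplices is the boundary of a free sum of simplices, so $\Delta$ is polytopal. Its missing faces are the three simplices $\overline{V}$ themselves, of dimensions $u-1,u-1,2$. Since $u-1\geq 2$, we get $\Delta\in S(u-1,2u-1)\subseteq S(u,2u-1)$.

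Next I will compute $g(\Delta)$. The example's formula gives $g_j=j+1$ for $j\leq 2$, $g_j=3$ for $3\leq j\leq u-1$, and $g_u=1$. I would re-verify the top entry directly from $h_u-h_{u-1}$ of $(1+\dots+t^{u-1})^2(1+t+t^2)$, because the example's text and its displayed summary differ slightly. As a sanity check, for $u=3$ the polynomial $(1+t+t^2)^3$ has coefficients $1,3,6,7,6,3,1$, so $g=(1,2,3,1)$. In general the result is $g(\Delta)=(1,2,3,\dots,3,1)$ with $g_{u-1}=3$, because $u-1\geq 2$.

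Then I will argue as follows. For either a generic or a natural embedding $p$, the algebra $A(\Delta)=\R[\Delta]/(\Theta,c)$ has Hilbert function $(g_0,\dots,g_u)$, and all other graded components vanish. Its socle degree is $u$, since $g_u=1\neq 0$. Suppose $A(\Delta)$ were level. Then $g(\Delta)$ would be a level sequence with last entry $1$. By Theorem~\ref{thm: level sequence}(3) it would then be Gorenstein and hence symmetric, but $g_1=2\neq 3=g_{u-1}$. Alternatively, Theorem~\ref{thm: level sequence}(2) with $i=u-1$ and $j=1$ would force $3=g_{u-1}\leq g_1g_u=2$, which is false. Either way $A(\Delta)$ is not level.

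By the equivalence recalled before the example (from \cite[Theorem 3.1]{MNZ}), Conjecture~\ref{conj: affine reconstruction} therefore fails for $d=2u$. The only real obstacle is getting the value of $g_u$ exactly right. Everything else follows directly from the earlier results.
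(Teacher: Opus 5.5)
Your proposal is correct and is essentially the paper's proof. The paper takes $K(u-k,2u-1)$ with $k\geq 1$ and $u\geq 3k$ (your choice $k=1$ already covers every $u\geq 3$), observes $g_u=1$ but $g_1=2\neq 3=g_{u-1}$, and concludes via Theorem~\ref{thm: level sequence}(3) and the equivalence from \cite{MNZ} that $A(\Delta)$ is not level. You are also right that the example's intermediate expression should read $2u+1-2j$, so $g_u=1$, and your alternative via Theorem~\ref{thm: level sequence}(2), namely $3=g_{u-1}\leq g_1g_u=2$, works just as well.
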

	
	\begin{proof} By the discussion following Conjecture \ref{conj: affine reconstruction}, if $\Delta\in S(u,2u-1)$ satisfies the conjecture, then the $g$-vector of $\Delta$ must be a level sequence. In particular, if $g_u(\Delta)=1$, then $g(\Delta)$ must be Gorenstein, and hence symmetric. Now consider $K(u-k, 2u-1)$ with $k\geq 1$ and $u\geq 3k$. The computations carried out in Example \ref{Example: support of stress} show that while $g_u(K(u-k, 2u-1))=1$, one has $g_1(K(u-k, 2u-1))=2$ and $g_{u-1}(K(u-k, 2u-1))=3$, so the required symmetry fails. Thus, the complexes $K(u-k, 2u-1)\in S(u-k,2u-1)\subseteq S(u,2u-1)$ violate the conjecture.
	\end{proof}
	
	We recall another conjecture---this one is on the support of affine stresses---proposed in \cite[Conjecture 3.3]{NZ-Aff-Reconstr}, which we are now able to disprove.

	\begin{conjecture}\label{conj: support of stresses}
		Let $2\leq i\leq d/2$, let $\Delta\in S(d-i,d-1)$, and let $p$ be a generic or natural embedding of $\Delta$ in $\R^d$. Then every $(i- 1)$-face of $\Delta$ participates in some affine $i$-stress on $(\Delta, p)$. 
	\end{conjecture}
	
	While Conjecture \ref{conj: support of stresses} holds for $i=2$ (see \cite{Z-rigidity} for the case of generic embeddings and \cite[Theorem 1.5]{MNZ} for the case of natural embeddings), for following result shows that it fails in general.
	\begin{corollary}\label{counterexample2}
	For any $m\geq 1$, there exists a simplicial $(4m+2)$-polytope $P$ with $\partial P\in S(2m,4m+1)$, and a $2m$-face of $P$ that does not participate in any affine $(2m+1)$-stress on $(\partial P,p)$, where $p$ is the natural embedding of $\partial P$.
	\end{corollary}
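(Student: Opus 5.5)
We would take $P$ to be the free sum of two $2m$-simplices and a triangle, so that $\partial P=\partial\sigma^{2m}*\partial\sigma^{2m}*\partial\sigma^{2}$. This is the sphere $K(u-k,2u-1)$ of Example~\ref{Example: support of stress} with $u=2m+1$ and $k=1$ (note $u\geq 3k$). Its dimension is $(2m-1)+(2m-1)+1+2=4m+1$, so $d=4m+2$. Its missing faces have dimensions $2m,2m,2$, hence $\partial P\in S(2m,4m+1)\subseteq S(d-i,d-1)$ with $i=2m+1$. By the example, $g_{2m+1}(\partial P)=2u+1-2u=1$, so the space of affine $(2m+1)$-stresses on $(\partial P,p)$ is one-dimensional. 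The task is therefore to compute the unique such stress $\omega$ explicitly for a convenient natural embedding, and to exhibit a $2m$-face $F$ whose squarefree monomial $x^F$ has coefficient $0$ in $\omega$. Since $|F|=2m+1=\deg\omega$, participation of $F$ in $\omega$ means exactly that this coefficient is nonzero.

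\textbf{Embedding and linear stresses.} Denote the three vertex sets by $A$, $B$ (each of size $2m+1$) and $C$ (of size $3$). Realize the three simplices in complementary linear subspaces, each with centroid at the origin. Then the only affine dependence within $X\in\{A,B,C\}$ has all coefficients equal, and $\theta$-annihilation is satisfied by the linear forms $L_X=\sum_{v\in X}x_v$.

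Next we would argue that the linear stresses (those killed by all $\partial_{\theta_j}$) of degree $2m+1$ supported on $\partial P$ are spanned by the products $L_A^aL_B^bL_C^c$ with $a+b+c=2m+1$, $a,b\le 2m$ and $c\le 2$. The constraints on $a,b,c$ come from the face condition: $L_X^{|X|}$ contains the monomial of the missing face $X$. To justify this description, we would use the fact that these products are linearly independent and count them: their number equals $h_{2m+1}(\partial P)$, which is the dimension of the linear stress space. This matching count is what forces the spanning claim.

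\textbf{Affine condition.} Since $c=L_A+L_B+L_C$ as a linear form, we have $\partial_c L_X^a=a|X|L_X^{a-1}$. Writing $\omega=\sum\alpha_{a,b,c}L_A^aL_B^bL_C^c$, the condition $\partial_c\omega=0$ becomes, for each $a+b+c=2m$,
$$(2m+1)(a+1)\alpha_{a+1,b,c}+(2m+1)(b+1)\alpha_{a,b+1,c}+3(c+1)\alpha_{a,b,c+1}=0,$$
where $\alpha$ is set to zero outside the allowed range. We would solve this linear system; by the $g$-computation its solution space is one-dimensional. We then examine which coefficients $\alpha_{a,b,c}$ of the generator are forced to vanish, using the $A\leftrightarrow B$ symmetry to cut down the bookkeeping.

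\textbf{Choice of $F$.} A $2m$-face $F$ with $(|F\cap A|,|F\cap B|,|F\cap C|)=(a,b,c)$ has $x^F$ appearing only in the product $L_A^aL_B^bL_C^c$, with a nonzero multinomial coefficient. Hence $F$ fails to participate in $\omega$ exactly when $\alpha_{a,b,c}=0$. The main obstacle is this explicit solution of the recurrence: we must show that some admissible triple $(a,b,c)$, with $a,b\le 2m$, $c\le 2$, $a+b+c=2m+1$, has $\alpha_{a,b,c}=0$. We would first test the extreme triples $(2m-1,0,2)$ and $(2m,0,1)$. For these the recurrence rows at the boundary $b=0$ (where terms with index $2m+1$ drop out) should directly force vanishing or produce a simple relation. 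If neither vanishes, we would look for a sign or parity cancellation in the middle range, aided by the symmetry $\alpha_{a,b,c}=\pm\alpha_{b,a,c}$. We expect some coefficient to vanish because $g(\partial P)$ is not level.
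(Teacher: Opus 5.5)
Your setup is correct as far as it goes. The products $L_A^aL_B^bL_C^c$ with $a,b\le 2m$, $c\le 2$ are independent linear stresses, the count $h_{2m+1}$ shows they span, and your recurrence is exactly the condition $\partial_c\omega=0$. But the proposal stops precisely where the content of the corollary lies: you never exhibit a triple with $\alpha_{a,b,c}=0$. Moreover, the triples you plan to test first are the wrong ones. Up to scale the generator is
\[
\omega=\Bigl(L_A-\tfrac{2m+1}{3}L_C\Bigr)\Bigl(L_B-\tfrac{2m+1}{3}L_C\Bigr)(L_A-L_B)^{2m-1},
\]
so $\alpha_{2m,0,1}=-\tfrac{2m+1}{3}$ and $\alpha_{2m-1,0,2}=\tfrac{(2m+1)^2}{9}$ are both nonzero. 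The boundary rows only give relations such as $(2m+1)\alpha_{2m,1,0}=-3\alpha_{2m,0,1}$. Your closing heuristic is also not an argument. Non-levelness of $g(\partial P)$ says that $\Stress^a_{2m}$ is not spanned by first derivatives of $\Stress^a_{2m+1}$, and that does not by itself produce a non-participating $2m$-face.

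The missing step is to determine the generator, or at least the sign in $\alpha_{b,a,c}=\pm\alpha_{a,b,c}$; a symmetric generator would force no vanishing at all. The paper simply writes the stress down. With $u=2m+1$, the forms $L_A-\frac{u}{3}L_C$, $L_B-\frac{u}{3}L_C$ and $L_A-L_B$ are affine $1$-stresses (the paper calls $L_A,L_B,L_C$ $y_1,y_2,y_3$). Since $\partial_{\theta_j}$ and $\partial_c$ are derivations, the product above is killed by them. Its degrees in $L_A,L_B,L_C$ are at most $2m,2m,2$, so it is supported on faces, and since $g_{2m+1}=1$ it spans.

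You can also derive it inside your own framework. $\partial_c$ acts on polynomials in $L_A,L_B,L_C$ as differentiation in the direction $(2m+1,2m+1,3)$, whose kernel is $\R[W_1,W_2]$ with $W_1=L_A-\frac{u}{3}L_C$ and $W_2=L_B-\frac{u}{3}L_C$. The bound $\deg_{L_C}\le 2$ forces divisibility by $(W_1-W_2)^{2m-1}$. The bounds $\deg_{L_A},\deg_{L_B}\le 2m$ then force the quadratic cofactor to be a multiple of $W_1W_2$.

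This generator is antisymmetric under $A\leftrightarrow B$, and $(m,m,1)$ is the only admissible triple with $a=b$, so $\alpha_{m,m,1}=0$. Hence the faces with $m$ vertices in each of $A$ and $B$ and one vertex in $C$ do not participate.
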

	\begin{proof}
	Let $u=2m+1$ and consider the sphere $K(2m, 4m+1)=\partial \sigma^{2m}*\partial \sigma^{2m}*\partial \sigma^2\in S(2m, 4m+1)$ from Example \ref{Example: support of stress}. This sphere can be realized as the boundary of a simplicial $(4m+2)$-polytope $P$ on vertex set $[2u+3]$, where the position vectors of the vertices $1,2,\dots, 2u+3$ are given by
		$$ e_1,e_2,\dots,e_{u-1}, -\sum_{i=1}^{u-1}e_i,\quad e_u,e_{u+1},\dots,e_{2u-2}, -\sum_{i=u}^{2u-2}e_i, \quad e_{2u-1},e_{2u}, -(e_{2u-1}+e_{2u}). $$
		
		Let $y_1=x_1+x_2+\dots+x_u$, $y_2=x_{u+1}+x_{u+2}+\dots+x_{2u}$, and $y_3=x_{2u+1}+x_{2u+2}+x_{2u+3}$. Then the definition of $p$ implies that $y_1-\frac{u}{3}\cdot y_3$ and  $y_2-\frac{u}{3}\cdot y_3$ belong to $\Stress^a_1(\partial P, p)$. Since the only missing faces of $\partial P$ are $\{1, 2,\dots, u\}$, $\{u+1, u+2,\dots,2u\}$, and $\{2u+1, 2u+2, 2u+3\}$, we conclude that $f=(y_1-\frac{u}{3}\cdot y_3)(y_2-\frac{u}{3}\cdot y_3)(y_1-y_2)^{u-2}$ is an element of $\Stress^a_u(\partial P, p)$. Furthermore, since  $g_u(\partial P)=1$, the stress $f$ forms a basis of $\Stress^a_u(\partial P, p)$. Now, the coefficient of $y_1^my_2^my_3$ in  $f$ is 
		$$-\frac{2m+1}{3}\cdot \binom{2m-1}{m}(-1)^{m-1} -\frac{2m+1}{3}\cdot \binom{2m-1}{m-1}(-1)^m=0.$$ In particular, no $2m$-face of $\partial P$ the form $F\cup G\cup v$, where $F$ is an $m$-subset of $\{1, 2, \dots, u\}$, $G$ is an $m$-subset of $\{u+1, u+2, \dots, 2u\}$, and $v\in \{2u+1, 2u+2, 2u+3\}$, is in the support of $f$. Therefore, no face of this form is in the support of any element of $\Stress^a_u(\partial P, p)$, and so
		the conjecture fails in this case.
	\end{proof}

\	{\small
		\bibliography{refs}

\begin{thebibliography}{10}

\bibitem{Adiprasito-g-conjecture}
K.~Adiprasito.
\newblock Combinatorial {L}efschetz theorems beyond positivity.
\newblock arXiv:1812.10454v4, 2018.

\bibitem{AdiprasitoPapadakisPetrotou}
K.~Adiprasito, S.~A. Papadakis, and V.~Petrotou.
\newblock Anisotropy, biased pairings, and the {L}efschetz property for
  pseudomanifolds and cycles.
\newblock arXiv:2101.07245v2, 2021.

\bibitem{EignerZiegler}
M.~Aigner and G.~M. Ziegler.
\newblock {\em Proofs from {\it {T}he {B}ook}}.
\newblock Springer, Berlin, sixth edition, 2018.

\bibitem{Barnette-LBT-pseudomanifolds}
D.~Barnette.
\newblock Graph theorems for manifolds.
\newblock {\em Israel J. Math.}, 16:62--72, 1973.

\bibitem{Bjorner-93}
A.~Bj\"{o}rner.
\newblock Partial unimodality for {$f$}-vectors of simplicial polytopes and
  spheres.
\newblock In {\em Jerusalem combinatorics '93}, volume 178 of {\em Contemp.
  Math.}, pages 45--54. Amer. Math. Soc., Providence, RI, 1994.

\bibitem{BE}
C.~Blancafort and J.~Elias.
\newblock On the growth of the {H}ilbert function of a module.
\newblock {\em Math. Z.}, 234(3):507--517, 2000.

\bibitem{CharneyDavis95}
R.~Charney and M.~Davis.
\newblock The {E}uler characteristic of a nonpositively curved, piecewise
  {E}uclidean manifold.
\newblock {\em Pacific J.~Math.}, 171:117--137, 1995.

\bibitem{Chudnovsky-Nevo}
M.~Chudnovsky and E.~Nevo.
\newblock Stable sets in flag spheres.
\newblock {\em European J. Combin.}, 110:103699, 2023.

\bibitem{CJT}
J.~Cruickshank, B.~Jackson, and S.~i.~Tanigawa.
\newblock Global rigidity of triangulated manifolds.
\newblock {\em Adv. Math.}, 458(part A):Paper No. 109953, 59, 2024.

\bibitem{DavisOkun}
M.~Davis and B.~Okun.
\newblock Vanishing theorems and conjectures for the $\ell^2$-homology of
  right-angled coxeter groups.
\newblock {\em Geom.~Topol.}, 5:7--74, 2001.

\bibitem{Fogelsanger88}
A.~Fogelsanger.
\newblock {\em The generic rigidity of minimal cycles}.
\newblock PhD thesis, Cornell University, 1988.

\bibitem{Gal05}
S.~Gal.
\newblock Real root conjecture fails for five- and higher-dimensional spheres.
\newblock {\em Discrete Comput.~Geom.}, 34:269--284, 2005.

\bibitem{GHMS}
A.~V. Geramita, T.~Harima, J.~C. Migliore, and Y.~S. Shin.
\newblock The {H}ilbert function of a level algebra.
\newblock {\em Mem. Amer. Math. Soc.}, 186(872):vi+139, 2007.

\bibitem{GoffKleeN}
M.~Goff, S.~Klee, and I.~Novik.
\newblock Balanced complexes and complexes without large missing faces.
\newblock {\em Ark.~Mat.}, 49:335--350, 2011.

\bibitem{Gromov87}
M.~Gromov.
\newblock Hyperbolic groups.
\newblock In S.~M. Gersten, editor, {\em Essays in group theory}, volume~8 of
  {\em Math. Sci. Res. Inst. Publ.}, pages 75--263. Springer, New York, 1987.

\bibitem{Hulett}
H.~A. Hulett.
\newblock A generalization of {M}acaulay’s theorem.
\newblock {\em Comm. Algebra}, 23(4):1249--1263, 1995.

\bibitem{Kalai87}
G.~Kalai.
\newblock Rigidity and the lower bound theorem.{ I}.
\newblock {\em Invent.~Math.}, 88:125--151, 1987.

\bibitem{KaruXiao}
K.~Karu and E.~Xiao.
\newblock On the anisotropy theorem of {P}apadakis and {P}etrotou.
\newblock {\em Algebr. Comb.}, 6(5):1313--1330, 2023.

\bibitem{Klee64}
V.~Klee.
\newblock A combinatorial analogue of {P}oincar\'e's duality theorem.
\newblock {\em Canad. J. Math.}, 16:517--531, 1964.

\bibitem{LS}
M.~Larson and A.~Stapledon.
\newblock Complementary vectors of simplicial complexes.
\newblock arXiv:2504.20264, 2025.

\bibitem{McMullen70}
P.~McMullen.
\newblock The maximum numbers of faces of a convex polytope.
\newblock {\em Mathematika}, 17:179--184, 1970.

\bibitem{McMullen96}
P.~McMullen.
\newblock Weights on polytopes.
\newblock {\em Discrete Comput. Geom.}, 15(4):363--388, 1996.

\bibitem{McMullenWalkup71}
P.~McMullen and D.~W. Walkup.
\newblock A generalized lower-bound conjecture for simplicial polytopes.
\newblock {\em Mathematika}, 18:264--273, 1971.

\bibitem{MuraiNevo2013}
S.~Murai and E.~Nevo.
\newblock On the generalized lower bound conjecture for polytopes and spheres.
\newblock {\em Acta Math.}, 210(1):185--202, 2013.

\bibitem{MNZ}
S.~Murai, I.~Novik, and H.~Zheng.
\newblock Affine stresses, inverse systems, and reconstruction problems.
\newblock {\em Int. Math. Res. Not. IMRN}, pages 8540--8556, 2024.

\bibitem{Nevo2009}
E.~Nevo.
\newblock {Remarks on missing faces and generalized lower bounds on face
  numbers}.
\newblock {\em Electron.~J.~Combin.}, 16(2):Research Paper 8, 2009.

\bibitem{NevoNovinsky}
E.~Nevo and E.~Novinsky.
\newblock A characterization of simplicial polytopes with $g_2 = 1$.
\newblock {\em J. Combin. Theory Ser. A}, 118:387--395, 2011.

\bibitem{NZ-Aff-Reconstr}
I.~Novik and H.~Zheng.
\newblock Affine stresses: the partition of unity and {K}alai's reconstruction
  conjectures.
\newblock {\em Discrete Comp.~Geom.}, 72:928--956, 2024.

\bibitem{NZ-g_i}
I.~Novik and H.~Zheng.
\newblock Simplicial spheres with $g_k=1$.
\newblock arXiv:2601.10072, 2026.

\bibitem{PapadakisPetrotou}
S.~A. Papadakis and V.~Petrotou.
\newblock The characteristic 2 anisotropicity of simplicial spheres.
\newblock arXiv:2012.09815, 2020.

\bibitem{Stanley80}
R.~P. Stanley.
\newblock The number of faces of a simplicial convex polytope.
\newblock {\em Adv.~Math.}, 35:236--238, 1980.

\bibitem{Stanley96}
R.~P. Stanley.
\newblock {\em Combinatorics and Commutative Algebra}.
\newblock Progress in Mathematics. Birkh{\"a}user, Boston, Inc., Boston, MA,
  1996.
\newblock Second edition.

\bibitem{Swartz05}
E.~Swartz.
\newblock Lower bounds for {$h$}-vectors of {$k$}-{CM}, independence, and
  broken circuit complexes.
\newblock {\em SIAM J. Discrete Math.}, 18(3):647--661, 2004/05.

\bibitem{Tay}
T.-S. Tay.
\newblock Lower-bound theorems for pseudomanifolds.
\newblock {\em Discrete Comput. Geom.}, 13(2):203--216, 1995.

\bibitem{Venturello}
L.~Venturello.
\newblock On flag spheres with few equators.
\newblock arXiv:2203.10003, 2022.

\bibitem{Z-rigidity}
H.~Zheng.
\newblock The rigidity of the graphs of homology spheres minus one edge.
\newblock {\em Discrete Math.}, 343:112135, 6, 2020.

\end{thebibliography}
		\bibliographystyle{plain}
	}
\end{document}